\newtheorem{theorem}{Theorem}
\newtheorem{lemma}{Lemma}
\newtheorem{proposition}{Proposition}
\theoremstyle{definition}
\newtheorem{definition}{Definition}
\newtheorem{remark}{Remark}
\newtheorem{assumption}{Assumption}
\newtheorem{fact}{Fact}
\newtheorem{question}{Question}
\newcommand{\beq}{\begin{equation}}
\newcommand{\eeq}{\end{equation}}
\newcommand{\beqs}{\begin{eqnarray*}}
\newcommand{\eeqs}{\end{eqnarray*}}
\newcommand{\beqn}{\begin{eqnarray}}
\newcommand{\eeqn}{\end{eqnarray}}
\newcommand{\beqa}{\begin{array}}
\newcommand{\eeqa}{\end{array}}
\numberwithin{equation}{section}
\numberwithin{theorem}{section}
\numberwithin{lemma}{section}
\numberwithin{remark}{section}
\numberwithin{assumption}{section}
\numberwithin{definition}{section}
\numberwithin{fact}{section}
\numberwithin{question}{section}
\begin{document}

\title{Hessian equations of Krylov type on K\"ahler manifolds}

\author{Li Chen}
\address{Faculty of Mathematics and Statistics, Hubei Key Laboratory of Applied Mathematics, Hubei University, Wuhan 430062, P.R. China}
\email{chenli@@hubu.edu.cn}


\date{}

\begin{abstract}
In this paper, we consider a Hessian equation with its structure as a combination of elementary symmetric functions
on closed K\"ahler manifolds. We provide a sufficient and necessary condition for the solvability of this equation,
which generalizes the results of Hessian equation and Hessian quotient equation. As a consequence, we can solve a
complex Monge-Amp\`ere type equation proposed by Chen in the case that one of the coefficients is negative.
The key to our argument is a clever use of the special properties
of the Hessian quotient operator $\frac{\sigma_k}{\sigma_{k-1}}$.

\end{abstract}

\keywords{Complex Hessian equations; Cone condition; K\"ahler manifolds.}

\subjclass[2010]{
35J96, 52A20, 53C44.
}

\maketitle
\vskip4ex

\section{Introduction}

Let $(M, \omega)$ be a closed K\"ahler manifold of complex dimension $n\geq2$
and fix a real smooth closed $(1,1)$-form $\chi_0$ on $M$. For any $C^2$
function $u: M \rightarrow \mathbb{R}$, we can obtain a new real $(1,1)$-form
\begin{eqnarray*}
\chi_u=\chi_0+\frac{\sqrt{-1}}{2}\partial\overline{\partial} u.
\end{eqnarray*}
We consider the following Hessian type equation on $(M, \omega)$
\begin{eqnarray}\label{K-eq}
\chi^{k}_{u}\wedge \omega^{n-k}=\sum_{l=0}^{k-1}\alpha_l(z)\chi_{u}^{l}\wedge \omega^{n-l}, \quad 2\leq k\leq n,
\end{eqnarray}
where $\alpha_0(z), ..., \alpha_{k-1}(z)$ are real smooth functions on $M$.
This equation includes some of the most partial differential
equations in complex geometry and analysis.

\begin{itemize}
  \item For $k=n$, $\alpha_1=...=\alpha_{k-1}=0$, \eqref{K-eq} is the complex Monge-Amp\`ere equation
  $\chi_{u}^{n}=\alpha_0(z)\omega^{n}$ which
was famously solved on compact K\"ahler manifolds by Yau in his resolution of Calabi conjecture \cite{Yau78}, and on compact Hermitian manifolds
by Tosati-Weinkove \cite{Tos10} with some earlier work by Cherrier \cite{Che87},
Hanani \cite{Han96}, Guan-Li \cite{Guan10} and Zhang \cite{Zhang10}.
  \item For $k=n$, $\alpha_0=...=\alpha_{k-2}=0$ and the constant $\alpha_{n-1}$, \eqref{K-eq} is the $(n ,n-1)$-quotient
equation $\chi_{u}^{n}=\alpha_{n-1}\chi_{u}^{n-1}\wedge \omega$ which appears in a problem proposed
by Donalson \cite{Don99} in the setting of moment maps. After some progresses made in \cite{Chen04, Wei04, Wei04D, Wei06},
it was solved by Song-Weinokove \cite{Song08} via $J$-flow.
  \item For $k=n$ and $\alpha_0, ..., \alpha_{n-1} \in \mathbb{R}$,
\eqref{K-eq} was proposed by Chen (see Conjecture/Question 4 in \cite{Chen00}) in the study of Mabuchi energy.
A special case $\chi_{u}^{n}=\alpha_{l}\chi_{u}^{l}\wedge \omega^{n-l}$ ($1<l<n$) was solved by Fang-Lai-Ma
\cite{Fang11} by generalizing the result of Song-Weinokove \cite{Song08} and non constant $\alpha_{l}$ on Hermitian manifolds
were considered by \cite{Guan12, Guan15, Sun16}. Chen's problem was solved by Collins
and Sz\'ekelyhidi \cite{Co17} for $\alpha_i\geq 0$ ($0\leq i\leq k-1$) as was conjectured by Fang-Lai-Ma.  Moreover, a generalized
equation of Chen's problem was studied by Sun \cite{Sun14-1, Sun14-2}, Pingali \cite{Pin14-1, Pin14-2, Pin16} and Phong-T\^o \cite{Ph17-1}.
  \item The Hessian equation $\chi_{u}^{k}\wedge \omega^{n-k}=\alpha_0(z)$ and the $(k ,l)$-quotient
equation $\chi_{u}^{k}\wedge \omega^{n-k}=\alpha_l(z) \chi_{u}^{l}\wedge \omega^{n-l}$ for $1\leq l<k<n$
are also the special cases of \eqref{K-eq}. The first one was solve by Dinew-Kolodziey
\cite{Din12} (combined with the estimate of Hou-Ma-Wu \cite{Hou10}) on K\"ahler manifolds and by Sun \cite{Sun17} on Hermitian manifolds (see also Zhang \cite{Zhang17}).
The second one was solved by Sz\'ekelyhidi \cite{Sze18}
for constant $\alpha_l$ and non constant $\alpha_l$ was settled by Sun \cite{Sun17}.
\end{itemize}

All previous results on the equation \eqref{K-eq} require all coefficients $\alpha_i\geq 0$ for $0\leq i\leq k-1$. Thus, it is interesting to ask:
\begin{question}
Can we solve the equation \eqref{K-eq} for some $\alpha_i(z)$ which change sign or are negative? In particular,
can we solve Chen's problem for some $\alpha_i<0$?
\end{question}

In this paper, we make some progress on this direction. To statement our main theorem, we first need to introduce the cone condition.
\begin{definition}
Following \cite{Song08, Fang11, Guan15, Sun17}, we set
\begin{eqnarray}\label{cone}
\mathcal{C}_k(\omega):&=&\Big\{[\chi]: \exists \ \overline{\chi}
\in [\chi]\cap \Gamma_{k-1}(M), \quad \mbox{such that} \\ \nonumber&&\
k\overline{\chi}^{k-1}\wedge \omega^{n-k}>\sum_{l=1}^{k-1}l\alpha_l \overline{\chi}^{l-1}\wedge \omega^{n-l}
\Big\}.
\end{eqnarray}
where $\alpha_i(z)$ with $0\leq i\leq k-1$ are real smooth functions on $M$ and
$[\chi]=\{\chi+\frac{\sqrt{-1}}{2}\partial\overline{\partial} u: u\in C^2(M)\}$.
If $[\chi] \in \mathcal{C}_k(\omega)$, we say that $\chi$ satisfies the cone condition for \eqref{K-eq}.
\end{definition}

Next, we make the following assumption on the coefficient functions $\alpha_{i}(z)$ ($0\leq i\leq k-1$).

\begin{assumption}\label{Ass}
Suppose that $\alpha_0(z), \alpha_1(z), ..., \alpha_{k-1}(z)$ are real smooth functions on $M$
satisfying

(i) either $\alpha_l(z)>0$ or $\alpha_l(z)\equiv0$ for $0\leq l\leq k-2$ and all $z \in M$;

(ii) $\sum_{l=0}^{k-2}\alpha_l(z)>0$ for all $z \in M$;

(iii) $\alpha_{l}(z)\geq c_{k,l}$ for $0\leq l\leq k-1$ and
\begin{eqnarray}\label{Nece}
\int_{M}\chi_{0}^{k}\wedge \omega^{n-k}\leq \sum_{l=0}^{k-1}c_{k,l}\int_{M}\chi_{0}^{l}\wedge \omega^{n-l},
\end{eqnarray}
where $c_{k, 0}, ..., c_{k, k-1}$ are constants.
\end{assumption}

\begin{remark}
We make no sign requirement for $\alpha_{k-1}(z)$ in Assumption \ref{Ass}.
\end{remark}

Based on Lemma \ref{k-i}, it is easy to see the condition $[\chi_0] \in \mathcal{C}_k(\omega)$
is necessary for the solvability of \eqref{K-eq}. We show such condition is also sufficient.

\begin{theorem}\label{Main}
Let $(M, \omega)$ be a closed K\"ahler manifold of complex dimension $n\geq2$ and $\chi_0$ be a
smooth closed real $(1,1)$-form on $M$.
Assume that $\alpha_0(z), \alpha_1(z), ..., \alpha_{k-1}(z)$
satisfies Assumption \ref{Ass}.
Then, there exist a unique (up to a constant) smooth function $u$ and $a \in \mathbb{R}$ satisfies
\begin{eqnarray*}
\chi^{k}_{u}\wedge \omega^{n-k}
=\sum_{l=0}^{k-2}\alpha_l(z)\chi_{u}^{l}\wedge \omega^{n-l}+[\alpha_{k-1}(z)+a]\chi_{u}^{k-1}\wedge \omega^{n-k+1}
\end{eqnarray*}
provided that $\chi_0 \in \mathcal{C}_k(\omega)$.
\end{theorem}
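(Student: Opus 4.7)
The plan is to apply the continuity method to a suitable reformulation of the equation. First I would divide both sides by $\chi_u^{k-1}\wedge\omega^{n-k+1}$ and move $\alpha_{k-1}$ to the left, rewriting the equation as
\[
F(\chi_u,z) \;:=\; \frac{\chi_u^{k}\wedge\omega^{n-k}}{\chi_u^{k-1}\wedge\omega^{n-k+1}}
- \sum_{l=0}^{k-2}\alpha_l(z)\,\frac{\chi_u^{l}\wedge\omega^{n-l}}{\chi_u^{k-1}\wedge\omega^{n-k+1}} - \alpha_{k-1}(z) \;=\; a.
\]
In terms of the eigenvalues of $\chi_u$ with respect to $\omega$, the principal part is a constant multiple of $\sigma_k/\sigma_{k-1}$; it is strictly elliptic and concave on $\Gamma_{k-1}$ and blows up at $\partial\Gamma_{k-1}$. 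Under Assumption \ref{Ass}, the non-negativity of $\alpha_l$ for $l\leq k-2$ together with standard computations shows that the full operator $F$ stays concave on $\Gamma_{k-1}$, and the cone condition $[\chi_0]\in\mathcal{C}_k(\omega)$ is exactly the statement that $\chi_0$ is a $C$-subsolution in the Sz\'ekelyhidi sense.

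I would then set up a continuity family $F^{(t)}(\chi_{u_t},z)=a_t$, $t\in[0,1]$, in which the coefficients $\alpha_l^{(t)}$ are deformed so that at $t=0$ the equation reduces to one of the already-solved model cases (for instance, by bringing $\alpha_l^{(t)}$ toward the constants $c_{k,l}$ of Assumption \ref{Ass}(iii), with $u_0$ and $a_0$ chosen compatibly with \eqref{Nece}), while at $t=1$ one recovers the target equation; the persistence of the cone condition along the path is guaranteed by Assumption \ref{Ass}(iii). Openness at a solution $(u_t,a_t)$ is a routine implicit function theorem argument: the linearization in $u$ is elliptic thanks to the cone condition, and the scalar parameter $a$ absorbs the one-dimensional cokernel created by the invariance $u\mapsto u+\mathrm{const}$. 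A uniform bound on $|a_t|$ will follow from integrating the equation against $\omega^n$, using the cohomological invariance of $\int_M\chi_{u_t}^l\wedge\omega^{n-l}$ and the integral hypothesis \eqref{Nece}.

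The heart of the proof is closedness, which demands uniform $C^0$, $C^1$ and $C^2$ estimates along the path. A $C^0$ bound on $u_t$ (normalized by $\sup u_t=0$) can be obtained by the ABP/blow-up argument of Sz\'ekelyhidi and Collins--Sz\'ekelyhidi, which takes as input precisely the $C$-subsolution supplied by the cone condition together with a uniform sup-norm bound on the right-hand side (itself coming from the bound on $a_t$ and the smoothness of the $\alpha_l$). The second-order estimate would come from a maximum principle applied to a test function of the form $\log\lambda_{\max}(\chi_{u_t})+\varphi(u_t)+\psi(|\nabla u_t|^2)$, using the concavity of $F$ and the subsolution; the decisive point is that the $\sigma_k/\sigma_{k-1}$ operator enjoys a uniformly positive lower bound on $\sum_i F_{ii}$ on any compact subset of $\Gamma_{k-1}$, which is exactly what is needed to dominate the cubic gradient error terms. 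Once the eigenvalues of $\chi_{u_t}$ remain in a compact subset of $\Gamma_{k-1}$, Evans--Krylov and Schauder bootstrapping deliver the required smoothness, and uniqueness up to an additive constant follows from a standard maximum principle argument applied to the concave operator.

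The main obstacle, and the novelty that the abstract points to, is the lack of any sign requirement on $\alpha_{k-1}$: a negative $\alpha_{k-1}$ term cannot be absorbed on the right as a convex perturbation of the leading operator, so the usual Fang--Lai--Ma / Collins--Sz\'ekelyhidi framework does not apply directly. The trick is exactly the rearrangement above: once $\alpha_{k-1}$ is transferred onto the left and the equation is divided by $\chi_u^{k-1}\wedge\omega^{n-k+1}$, the $\alpha_{k-1}$ term becomes a bounded additive shift of the strictly elliptic, concave quotient operator $\sigma_k/\sigma_{k-1}$, and the subsolution and a priori estimate machinery go through unchanged. The unknown constant $a$ and the integral condition \eqref{Nece} then interact to keep the cone/subsolution condition alive throughout the continuity path, which is what makes the argument close.
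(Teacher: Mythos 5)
Your reformulation (dividing by $\chi_u^{k-1}\wedge\omega^{n-k+1}$ so that the operator becomes the concave, elliptic quotient $\sigma_k/\sigma_{k-1}$ minus nonnegative multiples of $\sigma_l/\sigma_{k-1}$ on $\Gamma_{k-1}$), the Hou--Ma--Wu type second order estimate, and the use of an auxiliary constant $a_t$ in a continuity method all match the architecture of the paper. However, the proposal has a genuine gap exactly at the step the paper identifies as the main difficulty: the construction of the continuity path. Your path deforms the coefficients toward the constants $c_{k,l}$ and asserts that at $t=0$ the equation ``reduces to one of the already-solved model cases.'' It does not: the constant-coefficient equation with $c_{k,k-1}$ of arbitrary sign is precisely Chen's problem with a negative coefficient, i.e.\ Theorem \ref{Main1}, which is a \emph{corollary} of the theorem you are proving, so your starting point is circular. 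The paper resolves this by defining $\gamma(z)$ through $\chi_0^{k}\wedge\omega^{n-k}-\sum_{l=1}^{k-2}\alpha_l\chi_0^{l}\wedge\omega^{n-l}=\gamma(z)\chi_0^{k-1}\wedge\omega^{n-k+1}$, so that $u\equiv 0$, $a_0=0$ solves the initial equation exactly, and then runs the continuity method \emph{twice}, through the intermediate coefficient $\widetilde\alpha_{k-1}=\max\{\alpha_{k-1},\gamma\}$; the monotonicity built into this choice, together with $a_t\le 0$ and $b_t\le 0$, is what keeps the cone condition (which depends on the coefficients) uniform along both paths. Your one-line claim that ``persistence of the cone condition is guaranteed by Assumption \ref{Ass}(iii)'' does not substitute for this, because the cone condition must hold for the shifted coefficient $\alpha_{k-1}^{(t)}+a_t$, not just for the deformed $\alpha_l^{(t)}$.

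Two further points need repair. First, your bound on $|a_t|$ ``by integrating against $\omega^n$ and cohomological invariance'' is not correct as stated: while $\int_M\chi_{u_t}^{l}\wedge\omega^{n-l}$ is cohomological, the weighted integrals $\int_M\alpha_l(z)\chi_{u_t}^{l}\wedge\omega^{n-l}$ are not, so integration combined with $\alpha_l\ge c_{k,l}$ and \eqref{Nece} yields only the one-sided bound $a_t\le 0$ (which is what the paper uses to preserve the cone condition on the second path); the bound from below is obtained in the paper by comparing at the maximum and minimum points of $u_t$ via the concavity of $(\sigma_k/\sigma_l)^{1/(k-l)}$, an argument absent from your sketch. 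Second, you list a $C^1$ estimate among the needed ingredients but give no mechanism for it; since the $C^2$ bound is of the form $\sup_M|\partial\overline\partial u|\le C(\sup_M|\nabla u|^2+1)$, it does not by itself close, and the paper obtains the gradient bound by a blow-up argument together with the Liouville theorem of Dinew--Ko{\l}odziej for maximal $k$-subharmonic functions. (Your ABP/$C$-subsolution route to the $C^0$ bound, and your Fredholm-alternative treatment of openness in place of the paper's Gauduchon-metric construction following Tosatti--Weinkove, are plausible alternatives, but they are asserted rather than carried out.)
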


In particular, if $\alpha_0, \alpha_1, ..., \alpha_{k-1}$ are constants, the equation \eqref{K-eq} was proposed by
Chen (see Conjecture/Question 4 in \cite{Chen00}) for $k=n$ and it was solved by Collins and Sz\'ekelyhidi \cite{Co17} for $\alpha_i\geq 0$
($0\leq i\leq n-1$). As a corollary of Theorem \ref{Main},
we can solve Chen's problem for $\alpha_{k-1}<0$.

\begin{theorem}\label{Main1}
Let $(M, \omega)$ be a closed K\"ahler manifold of complex dimension $n\geq2$ and $\chi_0$ be a
smooth closed real $(1,1)$-form on $M$.
Assume $\alpha_{k-1} \in \mathbb{R}$ and $\alpha_0, \alpha_1, ..., \alpha_{k-2}$ are nonnegative real numbers
satisfying $\sum_{i=0}^{k-2}\alpha_i>0$
and
\begin{eqnarray}\label{Nece-1}
\int_{M}\chi_{0}^{k}\wedge \omega^{n-k}=\sum_{l=0}^{k-2}\alpha_l\int_{M}\chi_{0}^{l}\wedge \omega^{n-l},
\end{eqnarray}
Then,
there exist a unique (up to a constant) smooth function $u$  satisfies \eqref{K-eq} if and only
if $[\chi_0] \in \mathcal{C}_k(\omega)$.
\end{theorem}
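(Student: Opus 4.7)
The plan is to derive Theorem \ref{Main1} from Theorem \ref{Main} by using the integrability assumption \eqref{Nece-1} to eliminate the free constant $a$ produced by the latter. For the necessity direction, if $u$ solves \eqref{K-eq} then $\chi_u$ itself is a representative in $[\chi_0]$ lying in $\Gamma_{k-1}(M)$ and satisfying the strict inequality defining $\mathcal{C}_k(\omega)$; this is precisely the content of Lemma \ref{k-i}, cited just before the statement of Theorem \ref{Main}.

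For the sufficiency direction, given $[\chi_0]\in\mathcal{C}_k(\omega)$, I plan to verify Assumption \ref{Ass} so that Theorem \ref{Main} applies to the constant data $\alpha_0,\dots,\alpha_{k-1}$. Items (i) and (ii) are immediate from the hypotheses on the first $k-1$ coefficients. For item (iii), I take $c_{k,l}:=\alpha_l$ for every $l$, in which case \eqref{Nece} reduces precisely to the assumed identity \eqref{Nece-1}. Theorem \ref{Main} then produces a pair $(u,a)$ satisfying
\[\chi_u^k\wedge\omega^{n-k}=\sum_{l=0}^{k-2}\alpha_l\chi_u^l\wedge\omega^{n-l}+(\alpha_{k-1}+a)\chi_u^{k-1}\wedge\omega^{n-k+1}.\]

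The crux is to show $a=0$. Because $\chi_u-\chi_0=\tfrac{\sqrt{-1}}{2}\partial\overline{\partial}u$ is exact, Stokes' theorem gives $\int_M\chi_u^l\wedge\omega^{n-l}=\int_M\chi_0^l\wedge\omega^{n-l}$ for every $l$; integrating the displayed identity and subtracting \eqref{Nece-1} collapses everything to $a\int_M\chi_0^{k-1}\wedge\omega^{n-k+1}=0$. The cone condition enters decisively here: it supplies $\overline{\chi}\in[\chi_0]\cap\Gamma_{k-1}(M)$, for which $\overline{\chi}^{k-1}\wedge\omega^{n-k+1}$ is a pointwise positive volume form, so $\int_M\chi_0^{k-1}\wedge\omega^{n-k+1}=\int_M\overline{\chi}^{k-1}\wedge\omega^{n-k+1}>0$. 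Therefore $a=0$ and $u$ already solves \eqref{K-eq}; uniqueness modulo an additive constant descends directly from Theorem \ref{Main}. The main obstacle is more bookkeeping than analysis: everything hinges on correctly matching the integrability identity \eqref{Nece-1} with \eqref{Nece} in the application of Theorem \ref{Main}, and on extracting strict positivity of the cohomological pairing $\int_M\chi_0^{k-1}\wedge\omega^{n-k+1}$ from the $(k-1)$-positive representative guaranteed by the cone condition.
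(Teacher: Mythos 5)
Your overall strategy is the paper's own intended one: the paper gives no separate proof of Theorem \ref{Main1} but presents it as a corollary of Theorem \ref{Main}, with necessity coming from Lemma \ref{k-i} (as in the computation \eqref{0-2}, applied to the admissible solution $\chi_u\in\Gamma_{k-1}(M)$) and sufficiency from applying Theorem \ref{Main} to the constant data and then using the cohomological normalization to eliminate the constant $a$; your appeal to Stokes' theorem and to the positivity of $\int_M\chi_0^{k-1}\wedge\omega^{n-k+1}$ via the $\Gamma_{k-1}$ representative is the right mechanism.

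There is, however, a concrete failure at the step you yourself call the crux, caused by the $l=k-1$ term. Taking \eqref{Nece-1} as literally stated (the sum stops at $l=k-2$): (a) with $c_{k,l}=\alpha_l$ for all $l$, condition \eqref{Nece} reads $\int_M\chi_0^k\wedge\omega^{n-k}\le\sum_{l=0}^{k-2}\alpha_l\int_M\chi_0^l\wedge\omega^{n-l}+\alpha_{k-1}\int_M\chi_0^{k-1}\wedge\omega^{n-k+1}$, which by \eqref{Nece-1} is equivalent to $\alpha_{k-1}\int_M\chi_0^{k-1}\wedge\omega^{n-k+1}\ge 0$; since the cone condition forces $\int_M\chi_0^{k-1}\wedge\omega^{n-k+1}>0$, Assumption \ref{Ass}(iii) is \emph{not} verified in precisely the advertised case $\alpha_{k-1}<0$, and no admissible choice $c_{k,l}\le\alpha_l$ repairs this. (b) Integrating the equation produced by Theorem \ref{Main} and subtracting \eqref{Nece-1} leaves $(\alpha_{k-1}+a)\int_M\chi_0^{k-1}\wedge\omega^{n-k+1}=0$, i.e.\ $a=-\alpha_{k-1}$ rather than $a=0$, so the $u$ you obtain solves \eqref{K-eq} with $\alpha_{k-1}$ replaced by $0$, not the original equation. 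Both defects vanish simultaneously if \eqref{Nece-1} is read as the full necessary condition $\int_M\chi_0^k\wedge\omega^{n-k}=\sum_{l=0}^{k-1}\alpha_l\int_M\chi_0^l\wedge\omega^{n-l}$, which is what the Remark following Theorem \ref{Main1} actually describes (and is forced, since under the literal \eqref{Nece-1} the same integration shows \eqref{K-eq} is unsolvable whenever $\alpha_{k-1}\neq 0$). Your two uses of \eqref{Nece-1} are only mutually consistent under this corrected reading, so you should state it explicitly and redo the two reductions with the $l=k-1$ term included; with that fix your argument is complete and coincides with the paper's.
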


\begin{remark}
Integrating both sides of \eqref{K-eq} on $M$, it is easy to see the equality
\eqref{Nece-1} is a necessary condition for the solvability of \eqref{K-eq} for $\alpha_i \in \mathbb{R}$ ($0\leq i\leq k-1$).
\end{remark}

The equation \eqref{K-eq} is just a Hessian type of equation with its structure
as a combination of elementary symmetric functions (see Section 2.1 for details):
\begin{eqnarray}\label{K-eq-H}
\sigma_k(\chi_u)=\sum_{l=0}^{k-1}\beta_l(z)\sigma_l(\chi_u),
\end{eqnarray}
where $\beta_l(z)=\frac{C_{n}^{k}}{C_{n}^{l}}\alpha_l(z)>0$.
Such type of equation has been firstly considered by Krylov about twenty years ago. In \cite{Kry95}, he considered the Dirichlet
problem of the following degenerate equation in a $(k-1)$-convex
domain $D$ in $\mathbb{R}^n$,
\[\sigma_k(D^2u) =
\sum_{l=0}^{k-1} \beta_l(x)\sigma_l(D^2u)\] with all coefficient
$\beta_{l}(x)\geq 0$ for $0\leq l\leq k-1$. Recently, Kryolv's equation was extended by Guan-Zhang
\cite{GZ19} to the case without the sign requirement for the coefficient
function $\beta_{k-1}(x)$ and the corresponding Neumann problems of Krylov's equation
in real and complex spaces were studied by the author with his collaborators in \cite{CCX19, CCX21}.
Moreover, such type of equations with its structure as a combination of elementary symmetric functions
arise naturally
from many important geometric problems, such as the so-called Fu-Yau equation
arising from the study of the Hull-Strominger system in theoretical physics, see Fu-Yau \cite{Fu07, Fu08}
and Phong-Picard-Zhang \cite{Ph17, Ph19, Ph20}. Furthermore, the special
Lagrangian equations introduced by Harvey and
Lawson \cite{HL} can also be written as the alternative combinations of elementary symmetric
functions. Lastly, the Krylov type of equations in prescribed curvature problems
were also studied by the author with his collaborators in \cite{Chen19, Chen20}.

Compared with Hessian equations or Hessian quotient equations, the ellipticity and concavity of
\eqref{K-eq-H} is not so easy to see. Moreover, we even have several ways to make \eqref{K-eq-H}
elliptic and concave.
In \cite{Co17, Sun14-1}, the authors rewrite the equation \eqref{K-eq-H} as the form
\begin{eqnarray}\label{K-eq-H-2}
-\sum_{l=0}^{k-1}\beta_l(z)\frac{\sigma_l(\chi_u)}{\sigma_{k}(\chi_u)}=-1.
\end{eqnarray}
The disadvantage of the form \eqref{K-eq-H-2} is that the ellipticity and concavity of
\eqref{K-eq-H-2} require all $\alpha_{i}\geq 0$ for $0\leq i\leq k-1$. In this paper, we
follow the viewpoint of Guan-Zhang \cite{GZ19} to rewrite the equation \eqref{K-eq-H} as the form
\begin{eqnarray}\label{K-eq-H-1}
\frac{\sigma_k(\chi_u)}{\sigma_{k-1}(\chi_u)}
-\sum_{l=0}^{k-2}\beta_l(z)\frac{\sigma_l(\chi_u)}{\sigma_{k-1}(\chi_u)}=\beta_{k-1}(z).
\end{eqnarray}
Then, we have the following very important fact (see Lemma \ref{lem2.5} for details) which was first observed by Guan-Zhang \cite{GZ19}
for Krylov' equation.
\begin{fact}\label{fact}
Assume $\beta_l(z)\geq 0$ for $0 \leq l \leq k-2$ and $\chi_u\in \Gamma_{k-1}(M)$, then the equation
\eqref{K-eq-H-2} is elliptic and concave. Compared with the form \eqref{K-eq-H-2}, its advantages lie in two aspects:
(i) we make no sign requirement for $\beta_{k-1}(z)$ and (ii)
the proper admissible set of solutions is $\Gamma_{k-1}$ which is larger than $\Gamma_{k}$.
\end{fact}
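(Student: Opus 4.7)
The plan is to reduce the assertion to a pair of scalar facts about elementary symmetric polynomials. After diagonalizing $\chi_u$ against $\omega$ at a point, let $\lambda = (\lambda_1, \ldots, \lambda_n) \in \Gamma_{k-1}$ be its eigenvalues; then the left-hand side of \eqref{K-eq-H-1} becomes the scalar operator
\[
F(\lambda; z) = \frac{\sigma_k(\lambda)}{\sigma_{k-1}(\lambda)} - \sum_{l=0}^{k-2} \beta_l(z)\,\frac{\sigma_l(\lambda)}{\sigma_{k-1}(\lambda)}.
\]
Ellipticity reduces to $\partial F/\partial \lambda_i > 0$ on $\Gamma_{k-1}$, and concavity of the operator in the matrix sense is equivalent to concavity of $F$ as a symmetric function of $\lambda$ on $\Gamma_{k-1}$. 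Since $\beta_l(z) \geq 0$, both properties will hold if they hold term by term.

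Two ingredients suffice, both classical. First, the Hessian quotient $\sigma_k/\sigma_{k-1}$ is smooth, strictly monotone increasing in each $\lambda_i$, and concave on $\Gamma_{k-1}$; this is the inverse-quotient structure underlying the $J$-flow and more generally the Krylov framework. Second, the Garding-Newton-Maclaurin theory (in the form sharpened by Lin-Trudinger) gives that for any $0 \leq q < p \leq n$, the power $(\sigma_p/\sigma_q)^{1/(p-q)}$ is concave and strictly increasing in each variable on $\Gamma_p$. I will apply this with $p=k-1$ and $q = l \in \{0, \ldots, k-2\}$.

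For ellipticity, strict monotonicity of $\sigma_k/\sigma_{k-1}$ handles the leading term. For each $0 \leq l \leq k-2$, monotonicity of $(\sigma_{k-1}/\sigma_l)^{1/(k-1-l)}$ on $\Gamma_{k-1}$ forces $\sigma_{k-1}/\sigma_l$, and hence its reciprocal $\sigma_l/\sigma_{k-1}$, to be decreasing in each $\lambda_i$; multiplying by $-\beta_l(z) \leq 0$ yields a nonnegative contribution to $\partial F/\partial \lambda_i$, and strict ellipticity follows. For concavity, the $\sigma_k/\sigma_{k-1}$ term is already concave; for $0 \leq l \leq k-2$, set $\mathcal{Q}_l := (\sigma_{k-1}/\sigma_l)^{1/(k-1-l)}$, which is concave and positive on $\Gamma_{k-1}$. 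Since $t \mapsto t^{-(k-1-l)}$ is convex and decreasing on $(0, \infty)$, the composition $\mathcal{Q}_l^{-(k-1-l)} = \sigma_l/\sigma_{k-1}$ is convex, so $-\sigma_l/\sigma_{k-1}$ is concave. Weighting by $\beta_l(z) \geq 0$ and summing preserves concavity of $F$.

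The real difficulty here is conceptual rather than technical: the whole point of recasting the equation as \eqref{K-eq-H-1} rather than \eqref{K-eq-H-2} is that placing $\sigma_{k-1}$ in the denominator widens the admissible cone from $\Gamma_k$ to $\Gamma_{k-1}$, while pushing $\beta_{k-1}(z)$ onto the right-hand side removes any sign restriction on that coefficient. Once the reformulation is in place, no new Newton-type inequality is required---the two standard Hessian-quotient facts above together with the convex-decreasing composition trick close the argument.
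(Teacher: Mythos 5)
Your proposal is correct and essentially coincides with the paper's own treatment: the Fact is just Lemma \ref{lem2.5}, obtained from Propositions \ref{Ell} and \ref{Con} — monotonicity and concavity of $\sigma_k/\sigma_{k-1}$ on $\Gamma_{k-1}$, concavity of $\bigl(\sigma_{k-1}/\sigma_l\bigr)^{1/(k-1-l)}$ there, and the convex-decreasing composition giving concavity of $-\sigma_l/\sigma_{k-1}$, exactly the ingredients you use (the same composition trick reappears in the paper's inequality \eqref{93}). One small caution: on $\Gamma_{k-1}$ the quotient $\sigma_k/\sigma_{k-1}$ is only non-strictly monotone, $\partial(\sigma_k/\sigma_{k-1})/\partial\lambda_i\geq 0$, so strict ellipticity should be attributed, as in Lemma \ref{lem2.5}, to the hypothesis $\sum_{l=0}^{k-2}\beta_l>0$ acting through the lower-order terms rather than to the leading term.
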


Fact \ref{fact} is based on the special properties of the Hessian quotient operator $\frac{\sigma_k}{\sigma_{k-1}}$
which are key to our argument.
In details, Propositions \ref{Ell} and \ref{Con}, and Lemma \ref{k-i} declare that if
$l=k-1$, some properties of Hessian quotient $\frac{\sigma_k}{\sigma_{l}}$ hold true in a larger cone $\Gamma_{k-1}$.
The root of these special properties goes back to the Newton inequality \eqref{N-1}.

In view of Fact \ref{fact}, using the equation \eqref{K-eq-H-1},
we first prove a $C^0$ estimate, generalizing the approach of \cite{Sun17}
in the case of Hessian quotient equations. On the one hand, we improve the inequalities
(2.11) and (2.12) in \cite{Sun17} to get the inequalities \eqref{C0-le-3}
and \eqref{C0-le-4} such that they do not depend on the lower bound of $\alpha_i$ ($0\leq i\leq k-1)$.
This improvement makes sure Theorem \ref{Main} even holds true for $\alpha_{k-1}$ with indefinite sign
and some $\alpha_i(z)\equiv 0$ ($0\leq i\leq k-2$).
On the other hand, in order to control the integrals of Hessian operators with lower degree in Lemma \ref{C0-00},
we need to get a new
inequality \eqref{C0-le-5} by iterating the inequality (3.15) in \cite{Sun17}.

Then, we follow
a technique of Hou-Ma-Wu \cite{Hou10} in the case of Hessian equation (which is in turn based
on ideas developed for the real Hessian equation by Chou-Wang \cite{Chou01}) to derive a second derivative bound of the form
\begin{eqnarray}\label{HMW}
\sup_{M}|\partial \overline{\partial} u|\leq C(\sup_{M}|\nabla u|^2+1).
\end{eqnarray}
To achieve such estimate, the main
task is to control a third order derivative term of the form
\begin{eqnarray}\label{F1}
\frac{F^{i\overline{i}}|\chi_{1\overline{1};i}|^2}{\chi_{1\overline{1}}^2}
\end{eqnarray}
in the case of Hessian equations or Hessian quotient equations. This term can be controlled with the help of the positive term
\begin{eqnarray}\label{F2}
-F^{i\overline{j},r\overline{s}}\chi_{i\overline{j};
1}\chi_{r\overline{s}; \overline{1}}.
\end{eqnarray}
However, in the case of the equation \eqref{K-eq-H-1}, there are an extra third order derivative term of the form
\begin{eqnarray}\label{Extra}
\sum_{l=0}^{k-2}\Big(\nabla_{\overline{1}}\beta_{l}\cdot F_{l}^{i\overline{j}}\chi_{i\overline{j};
1}+\nabla_{1}\beta_{l} \cdot F_{l}^{i\overline{j}}\chi_{i\overline{j};
\overline{1}}\Big).
\end{eqnarray}
Our first contribution is to extract a part of the term \eqref{F2} to cancel the term \eqref{Extra} and meanwhile make sure
the rest of the term \eqref{F2} is enough to help us control the \eqref{F1}. Thus, the extra third order derivative
term makes our argument significantly more delicate. Our another contribution is the inequalities
in Lemma \ref{lem2.6} and Lemma \ref{lem2.7} which are very important for our $C^2$ estimate, while similar equalities
for the case of Hessian equations or Hessian quotient equations are trivial.

A $C^1$-bound is
derived by combing a second derivative bound \eqref{HMW}, with a blow-up argument
and Liouville-type theorem due to Dinew-Kolodziej \cite{Din12}. The gradient bound combined with \eqref{HMW}
then bound $|\partial \overline{\partial} u|$.

Once we establish the a prior estimates for solutions to \eqref{K-eq}, we can use the continuity method to
prove Theorem \ref{Main}. Although the method is very standard in the study of elliptic PDEs,
it is not easy to carry out this method for \eqref{K-eq} on closed complex manifold. On the one hand,
since the essential cone condition depends on $\alpha_0, ..., \alpha_{k-1}$, we can not obtain
a priori estimates for the whole family of solutions in the method of continuity.
To overcome this difficulty, we use the idea in \cite{Sun16, Sun17} to technically set up an intermediate step
and apply the method twice. On the other hand, it is very hard to make the inverse function theorem work.
To resolve the issue, we further develop the approach in \cite{Tos10}. In this method,
a new Hermitian metric is constructed, which is a crucial new technique in implementing the method of continuity.

\section{Preliminaries}

In this section, we give some basic properties of elementary symmetric functions, which could be found in
\cite{L96, S05}, and establish some key lemmas.

\subsection{Basic properties of elementary symmetric functions}

For $\lambda=(\lambda_1, ... , \lambda_n)\in \mathbb{R}^n$,
the $k$-th elementary symmetric function is defined
by
\begin{equation*}
\sigma_k(\lambda) = \sum _{1 \le i_1 < i_2 <\cdots<i_k\leq n}\lambda_{i_1}\lambda_{i_2}\cdots\lambda_{i_k}.
\end{equation*}
We also set $\sigma_0=1$ and denote by $\sigma_k(\lambda \left| i \right.)$ the $k$-th symmetric
function with $\lambda_i = 0$.
Recall that the G{\aa}rding's cone is defined as
\begin{equation*}
\Gamma_k=\{ \lambda  \in \mathbb{R}^n :\sigma _i (\lambda ) > 0, \ \forall \ 1 \le i \le k\}.
\end{equation*}

\begin{proposition}\label{prop2.1}
Let $\lambda=(\lambda_1,\dots,\lambda_n)\in\mathbb{R}^n$ and $k
= 1, \cdots,n$, then
\begin{eqnarray}\label{2.1-1}
\sigma_k(\lambda)=\sigma_k(\lambda|i)+\lambda_i\sigma_{k-1}(\lambda|i),
\quad \forall \,1\leq i\leq n,
\end{eqnarray}
\begin{eqnarray*}\label{2.1-2}
\sum_{i=1}^{n} \lambda_i\sigma_{k-1}(\lambda|i)=k\sigma_{k}(\lambda),
\end{eqnarray*}
\begin{eqnarray*}\label{2.1-3}
\sum_{i=1}^{n}\sigma_{k}(\lambda|i)=(n-k)\sigma_{k}(\lambda).
\end{eqnarray*}
\end{proposition}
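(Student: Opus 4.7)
The three identities in Proposition \ref{prop2.1} are purely combinatorial statements about the elementary symmetric polynomials, so my plan is to prove them directly from the definition $\sigma_k(\lambda)=\sum_{i_1<\cdots<i_k}\lambda_{i_1}\cdots\lambda_{i_k}$ by classifying or counting the $k$-subsets of $\{1,\dots,n\}$. No analytic input is required; the whole proposition is formal manipulation, and none of the three steps should present a genuine obstacle.

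For the first identity \eqref{2.1-1}, the plan is to partition the set of $k$-subsets $S\subset\{1,\dots,n\}$ appearing in $\sigma_k(\lambda)$ according to whether $i\in S$ or not. The subsets with $i\notin S$ contribute exactly $\sigma_k(\lambda\,|\,i)$ by definition. For each subset with $i\in S$, factoring out $\lambda_i$ leaves a sum over $(k-1)$-subsets of $\{1,\dots,n\}\setminus\{i\}$, which is precisely $\lambda_i\sigma_{k-1}(\lambda\,|\,i)$. Adding the two contributions gives the claim.

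For the second identity, the cleanest route is a double-counting argument. Expanding the left-hand side,
\begin{eqnarray*}
\sum_{i=1}^n\lambda_i\sigma_{k-1}(\lambda\,|\,i)=\sum_{i=1}^n\ \sum_{\substack{|S|=k-1\\ i\notin S}}\lambda_i\prod_{j\in S}\lambda_j,
\end{eqnarray*}
and the pair $(i,S)$ with $i\notin S$ and $|S|=k-1$ corresponds bijectively to a pair (element, $k$-set) with the element belonging to the $k$-set. Since each $k$-set $T$ has exactly $k$ such distinguished elements, the sum equals $k\sum_{|T|=k}\prod_{j\in T}\lambda_j=k\sigma_k(\lambda)$. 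Alternatively, one can sum the already-established \eqref{2.1-1} over $i$ and use the third identity, but the direct count is cleaner.

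For the third identity, I would again count the contributions of each monomial $\prod_{j\in T}\lambda_j$, where $|T|=k$, to $\sum_i\sigma_k(\lambda\,|\,i)$. The monomial appears in $\sigma_k(\lambda\,|\,i)$ iff $i\notin T$, and there are exactly $n-k$ such indices $i$. Hence every $k$-monomial is counted $n-k$ times, giving $(n-k)\sigma_k(\lambda)$. With these three steps, the proposition is complete; the only mild care needed is book-keeping of the combinatorial identity between indexed subsets and monomials, but there is no analytic difficulty anywhere.
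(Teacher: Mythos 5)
Your proof is correct: the subset partition for \eqref{2.1-1} and the two counting arguments for the other identities are exactly the standard verifications of these classical facts. The paper itself gives no proof of Proposition \ref{prop2.1}, simply citing standard references, so your elementary combinatorial argument fills in precisely what those references contain and there is no substantive difference in approach to discuss.
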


The Newton inequality and the generalized Newton-MacLaurin inequality are as follows, which will be used later.

\begin{proposition}\label{prop2.4-0}
For $1\leq k\leq n-1$ and $\lambda \in \mathbb{R}^n$, we have
\begin{eqnarray}\label{N-1}
(n-k+1)(k+1)\sigma_{k-1}(\lambda)\sigma_{k+1}(\lambda)\leq k(n-k)\sigma^{2}_{k}(\lambda).
\end{eqnarray}
In particular, we have
\begin{eqnarray}\label{N}
\sigma_{k-1}(\lambda)\sigma_{k+1}(\lambda)\leq \sigma^{2}_{k}(\lambda).
\end{eqnarray}
\end{proposition}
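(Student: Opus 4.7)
The plan is to establish the Newton inequality \eqref{N-1} by the classical Rolle--polynomial argument; the weaker form \eqref{N} is then immediate, since $k(n-k)\leq (n-k+1)(k+1)$ for every $1\leq k\leq n-1$.

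First, I would form the auxiliary polynomial $P(t)=\prod_{i=1}^n(t-\lambda_i)$, which by construction splits over $\mathbb{R}$. In terms of the normalized Newton means $p_j:=\sigma_j(\lambda)/\binom{n}{j}$, it expands as
\[
P(t)=\sum_{j=0}^n(-1)^j\binom{n}{j}\,p_j\,t^{n-j}.
\]
By iterated application of Rolle's theorem, every derivative $P^{(m)}$ is again a real polynomial whose roots are all real. Differentiating $n-k-1$ times yields a degree-$(k+1)$ real-rooted polynomial $P^{(n-k-1)}$; reversing its coefficients (the map $R(t)\mapsto t^{\deg R}R(1/t)$) produces another degree-$(k+1)$ real-rooted polynomial. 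Differentiating this reversed polynomial a further $k-1$ times extracts a quadratic of the form $At^2-Bt+C$, where, up to a common sign, $A$, $B$, $C$ are explicit factorial and binomial multiples of $p_{k+1}$, $p_k$, and $p_{k-1}$ respectively. Since this quadratic still has only real roots, its discriminant satisfies $B^2\geq 4AC$.

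The last step is to unwind the explicit weights. After the factorial factors cancel and one applies the identity
\[
\frac{\binom{n}{k-1}\binom{n}{k+1}}{\binom{n}{k}^2}=\frac{k(n-k)}{(k+1)(n-k+1)},
\]
the discriminant condition collapses to the symmetric-mean form $p_k^2\geq p_{k-1}p_{k+1}$. Substituting back $p_j=\sigma_j/\binom{n}{j}$ recovers exactly \eqref{N-1}.

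The main obstacle I expect is purely bookkeeping: tracking the factorial and binomial weights and the alternating signs through the repeated differentiation and the coefficient reversal, and verifying that the extracted quadratic really has the stated form. The only genuine analytic subtlety is that some $\lambda_i$ may vanish, in which case the coefficient-reversal step is delicate because it corresponds to inversion of roots. I would sidestep this by working with the perturbation $\lambda_i\mapsto \lambda_i+\varepsilon$, establishing \eqref{N-1} for each $\varepsilon>0$, and then passing to the limit $\varepsilon\to 0^+$ by continuity of $\sigma_j$.
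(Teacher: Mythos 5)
Your argument is correct: it is the classical Rolle/coefficient-reversal proof of Newton's inequality, and the bookkeeping you outline does close up. Writing $P^{(n-k-1)}(t)=\sum_{j=0}^{k+1}c_j t^{k+1-j}$ with $c_j=(-1)^j\binom{n}{j}p_j\frac{(n-j)!}{(k+1-j)!}$, the reversal followed by $k-1$ differentiations produces the quadratic $\frac{(k+1)!}{2}c_{k+1}t^2+k!\,c_k t+(k-1)!\,c_{k-1}$; its discriminant inequality collapses, after the common factors of $(n!)^2$ cancel, to $p_k^2\geq p_{k-1}p_{k+1}$, and the binomial identity you quote converts this into \eqref{N-1}, with \eqref{N} following since $k(n-k)\leq(k+1)(n-k+1)$ and $\sigma_k^2\geq 0$. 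Note that the paper does not prove Proposition \ref{prop2.4-0} at all: it is quoted as a standard fact with references to \cite{L96, S05}, so the only real difference is that your treatment makes the statement self-contained. One small refinement to your handling of the degenerate case: the uniform shift $\lambda_i\mapsto\lambda_i+\varepsilon$ does not by itself guarantee that the reversal is nondegenerate for \emph{every} $\varepsilon>0$, since what must be nonzero is the constant term of $P^{(n-k-1)}$, which is a multiple of $\sigma_{k+1}$ of the perturbed vector; either argue that this is a nonconstant polynomial in $\varepsilon$, hence nonvanishing for all but finitely many $\varepsilon$, and pass to the limit along such values, or, more simply, dispense with the perturbation altogether by observing that the only degenerate situation is $\sigma_{k+1}(\lambda)=0$, in which case \eqref{N-1} reads $0\leq k(n-k)\sigma_k^2$ and is trivially true.
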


\begin{proposition}\label{prop2.4}
For $\lambda \in \Gamma_k$ and $n\geq k>l\geq 0$, $ r>s\geq 0$, $k\geq r$, $l\geq s$, we have
\begin{align}\label{NM}
\Bigg[\frac{{\sigma _k(\lambda )}/{C_n^k }}{{\sigma _l (\lambda )}/{C_n^l }}\Bigg]^{\frac{1}{k-l}}
\le \Bigg[\frac{{\sigma _r (\lambda )}/{C_n^r }}{{\sigma _s (\lambda )}/{C_n^s }}\Bigg]^{\frac{1}{r-s}}.
\end{align}
\end{proposition}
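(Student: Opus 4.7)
The plan is to reduce everything to the Newton inequality \eqref{N-1} and then iterate. Set $p_j := \sigma_j(\lambda)/C_n^j$, so the claim reads $(p_k/p_l)^{1/(k-l)} \le (p_r/p_s)^{1/(r-s)}$; the hypothesis $\lambda \in \Gamma_k$, combined with $\Gamma_k \subset \Gamma_j$ for $j \le k$, makes every $p_0, \ldots, p_k$ strictly positive, so all ratios are legitimate. Using the identity $C_n^{k-1}C_n^{k+1}/(C_n^k)^2 = k(n-k)/[(k+1)(n-k+1)]$, a routine division of \eqref{N-1} by $(C_n^k)^2$ converts it to $p_{k-1}p_{k+1} \le p_k^2$, equivalently the statement that the consecutive ratios $a_j := p_j/p_{j-1}$ form a non-increasing positive sequence $a_1 \ge a_2 \ge \cdots \ge a_k > 0$.

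Next, the telescoping product $(p_k/p_l)^{1/(k-l)} = (a_{l+1} a_{l+2} \cdots a_k)^{1/(k-l)}$ realizes the left-hand side of the target inequality as the geometric mean of the window $a_{l+1}, \ldots, a_k$, and similarly the right-hand side as the geometric mean of the window $a_{s+1}, \ldots, a_r$. The remaining task is the purely combinatorial fact that for a non-increasing positive sequence, shifting the averaging window to smaller indices can only increase the geometric mean. I would verify this by two elementary one-step moves: (a) removing the smallest right-end term $a_k$ increases the geometric mean, because $a_k \le (a_{l+1}\cdots a_{k-1})^{1/(k-l-1)}$, which gives $(p_k/p_l)^{1/(k-l)} \le (p_{k-1}/p_l)^{1/(k-1-l)}$; and (b) prepending the largest left-end term $a_l$ likewise increases the geometric mean, yielding $(p_k/p_l)^{1/(k-l)} \le (p_k/p_{l-1})^{1/(k-l+1)}$. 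Iterating (a) exactly $k-r$ times and (b) exactly $l-s$ times transforms the window $[l+1,k]$ into $[s+1,r]$ and produces the claim.

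No substantial obstacle arises: once \eqref{N-1} is in hand, the argument is a clean combinatorial manipulation of a log-concave sequence. The only points requiring mild care are the binomial bookkeeping producing $p_{k-1}p_{k+1}\le p_k^2$ from \eqref{N-1} and the verification that every intermediate $p_j$ stays strictly positive throughout the iteration, both of which follow immediately from $\lambda \in \Gamma_k$.
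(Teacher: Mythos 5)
Your argument is correct in substance, and in fact the paper contains no proof of Proposition \ref{prop2.4} to compare against: it is quoted from the references \cite{L96, S05} at the start of Section 2. What you supply is the standard self-contained derivation of \eqref{NM} from the Newton inequality \eqref{N-1}. The binomial bookkeeping checks out: dividing \eqref{N-1} by $(C_n^k)^2$ and using $C_n^{k-1}C_n^{k+1}/(C_n^k)^2=k(n-k)/[(k+1)(n-k+1)]$ gives $p_{k-1}p_{k+1}\le p_k^2$ for $p_j=\sigma_j/C_n^j$; positivity of $p_0,\dots,p_k$ on $\Gamma_k$ is immediate from the definition of the cone, so $a_j=p_j/p_{j-1}$ is a positive non-increasing sequence for $1\le j\le k$, and both sides of \eqref{NM} are geometric means of windows of the $a_j$, as you say; your one-step moves (a) and (b) are each valid. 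The one point you should repair is the order of the moves: if you perform all $k-r$ right-end deletions (a) before the $l-s$ left-end prepends (b), the window $[l+1,k]$ can be emptied along the way when the two windows are disjoint, i.e.\ when $r\le l$ (e.g.\ $k=4$, $l=3$, $r=2$, $s=1$, which the hypotheses permit), and the intermediate exponent $k-1-l$ vanishes. Performing the (b)-moves first, so the window becomes $[s+1,k]$ of size $k-s\ge r-s\ge 1$ before any deletion, or handling the disjoint case directly (every $a_i$ with $i\ge l+1$ is bounded above by every $a_j$ with $j\le r$), fixes this; it is a one-line adjustment rather than a gap in the idea.
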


\begin{proposition}\label{Ell}
For $\lambda\in\Gamma_{k}$, we have if $0\leq l<k\leq n$
\begin{eqnarray*}\label{0-1-sum}
\frac{\partial[\frac{\sigma_{k}}{\sigma_{l}}]
(\lambda)}{\partial\lambda_{i}}>0, \quad \forall \ 1\leq i\leq n.
\end{eqnarray*}
But,
\begin{eqnarray*}\label{0-1-sum-1}
\frac{\partial[\frac{\sigma_{k}}{\sigma_{k-1}}]
(\lambda)}{\partial\lambda_{i}} \geq0, \quad \forall \ 1\leq i\leq
n,
\end{eqnarray*}
holds for $\lambda\in\Gamma_{k-1}$.
\end{proposition}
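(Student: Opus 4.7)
The plan is to reduce both assertions to elementary Newton-type inequalities via a direct computation of the partial derivative. Using $\partial\sigma_j/\partial\lambda_i = \sigma_{j-1}(\lambda|i)$ together with the quotient rule, and then applying the decomposition identity \eqref{2.1-1} to both $\sigma_k$ and $\sigma_l$, the $\lambda_i$–proportional terms cancel and one arrives at
\begin{equation*}
\frac{\partial}{\partial\lambda_i}\!\left[\frac{\sigma_k}{\sigma_l}\right](\lambda) \;=\; \frac{\sigma_l(\lambda|i)\,\sigma_{k-1}(\lambda|i) \;-\; \sigma_{l-1}(\lambda|i)\,\sigma_k(\lambda|i)}{\sigma_l(\lambda)^{2}}.
\end{equation*}
Denoting the numerator by $N_i$, both claims reduce to determining the sign of $N_i$.

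For the first statement ($\lambda\in\Gamma_k$, $0\le l<k$), the hypothesis gives $\lambda|i\in\Gamma_{k-1}$, so $\sigma_j(\lambda|i)>0$ for every $0\le j\le k-1$. The case $l=0$ is immediate, as $\sigma_{-1}\equiv0$ leaves $N_i=\sigma_{k-1}(\lambda|i)>0$. For $1\le l\le k-1$, I would split on the sign of $\sigma_k(\lambda|i)$: if it is nonpositive, $N_i$ is manifestly positive; if it is positive, then $\lambda|i\in\Gamma_k$ and iterating the sharp Newton inequality \eqref{N-1} yields a strictly decreasing chain $\sigma_{j+1}(\lambda|i)/\sigma_j(\lambda|i)<\sigma_j(\lambda|i)/\sigma_{j-1}(\lambda|i)$ for $l\le j\le k-1$. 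Telescoping this chain and clearing denominators gives $N_i>0$.

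For the second statement, specialization to $l=k-1$ collapses $N_i$ to $\sigma_{k-1}(\lambda|i)^{2}-\sigma_{k-2}(\lambda|i)\,\sigma_k(\lambda|i)$, which is exactly the raw Newton inequality \eqref{N} applied to the $(n-1)$-vector $\lambda|i\in\mathbb{R}^{n-1}$, and therefore holds without any cone restriction whatsoever. The weaker hypothesis $\lambda\in\Gamma_{k-1}$ is invoked only to keep the denominator $\sigma_{k-1}(\lambda)^{2}$ of the derivative positive.

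I expect the main subtlety to be more structural than computational: recognizing why the argument does \emph{not} extend to $l<k-1$ inside the larger cone $\Gamma_{k-1}$. Only at the critical index $l=k-1$ does $N_i$ collapse to the universal Newton inequality \eqref{N}, whose validity requires no positivity of any $\sigma_j$; for $l<k-1$ the telescoping step genuinely needs $\lambda|i\in\Gamma_k$, so strict positivity cannot be pushed into $\Gamma_{k-1}$. This is precisely the mechanism behind the ``special properties'' of the quotient $\sigma_k/\sigma_{k-1}$ that Fact \ref{fact} and the author's larger-cone strategy rely upon.
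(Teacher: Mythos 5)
Your proposal is correct, and it is essentially the argument the paper relies on: Proposition \ref{Ell} is stated there without proof as a standard fact, but the paper's proof of Lemma \ref{k-i} carries out exactly your computation (the same numerator $\sigma_l(\lambda|i)\sigma_{k-1}(\lambda|i)-\sigma_{l-1}(\lambda|i)\sigma_k(\lambda|i)$, the same case split on the sign of $\sigma_k(\lambda|i)$, Newton--MacLaurin for strictness when $l<k-1$, and the unrestricted Newton inequality \eqref{N} at $l=k-1$). Your telescoping of \eqref{N-1} in place of a single application of \eqref{NM} is only a cosmetic variation, and your closing remark about why strictness fails in $\Gamma_{k-1}$ for $l<k-1$ matches the paper's point exactly.
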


\begin{proposition}\label{Con}
For any $n\geq k>l\ge 0$, we have
\begin{eqnarray*}
\bigg[\frac{\sigma_{k}(\lambda)}{\sigma_{l}(\lambda)}\bigg]^{\frac{1}{k-l}}
\end{eqnarray*}
is a concave function in $\Gamma_k$. But, for any $k\geq 1$, we have
$$\frac{\sigma_{k}(\lambda)}{\sigma_{k-1}(\lambda)}$$
is a concave function in $\Gamma_{k-1}$.
\end{proposition}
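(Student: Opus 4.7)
The plan is to prove the two assertions separately. The first assertion—concavity of $[\sigma_k/\sigma_l]^{1/(k-l)}$ on $\Gamma_k$—is classical, due originally to Marcus and later refined by Lin-Trudinger. One approach uses Garding's theory of hyperbolic polynomials: $\sigma_k$ is hyperbolic with respect to $(1,\ldots,1) \in \mathbb{R}^n$ with Garding cone $\Gamma_k$, so $\sigma_k^{1/k}$ is concave on $\Gamma_k$, and a refinement using the Newton-MacLaurin chain \eqref{NM} upgrades this to concavity of the quotient $[\sigma_k/\sigma_l]^{1/(k-l)}$. Alternatively, one diagonalizes, computes the Hessian at a point, and reduces the inequality to Newton's inequality \eqref{N-1}; the detailed derivations can be found in \cite{L96, S05}.

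For the second assertion—concavity of $F := \sigma_k/\sigma_{k-1}$ on the strictly larger cone $\Gamma_{k-1}$—my plan is to argue by direct Hessian computation. Since $F$ is homogeneous of degree one, $\nabla^2 F(\lambda)\cdot \lambda = 0$, so it suffices to verify $\xi^T \nabla^2 F(\lambda)\xi \le 0$ for every $\lambda \in \Gamma_{k-1}$ and $\xi \in \mathbb{R}^n$. Fix such $\lambda, \xi$ and set $A(t) := \sigma_k(\lambda + t\xi)$ and $B(t) := \sigma_{k-1}(\lambda + t\xi)$; differentiation of the univariate quotient gives
\begin{equation*}
 B^3 (A/B)''\big|_{t=0} = \Big(B^2 A'' - ABB'' + 2A(B')^2 - 2BA'B'\Big)\big|_{t=0}.
\end{equation*}
Since $B(0) = \sigma_{k-1}(\lambda) > 0$ on $\Gamma_{k-1}$, it remains to show the right-hand side is non-positive. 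Substituting the explicit formulas $A'(0) = \sum_i \sigma_{k-1}(\lambda|i)\xi_i$, $A''(0) = \sum_{i \ne j} \sigma_{k-2}(\lambda|ij)\xi_i\xi_j$, and the analogous expressions for $B$, regrouping terms, and invoking Newton's inequality \eqref{N} for appropriate minors of $\lambda$ then yields the desired sign.

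The principal obstacle is the second claim. Garding's theorem supplies concavity only on $\Gamma_k$, whereas here we must extend it to the strictly larger $\Gamma_{k-1}$, in which $\sigma_k$ itself may vanish or become negative. Tracking signs carefully throughout the Hessian calculation is essential; the argument hinges on the facts that $\sigma_{k-1}(\lambda) > 0$ on $\Gamma_{k-1}$—so that clearing the denominator $B^3$ does not flip inequalities—and that Newton's inequality \eqref{N} is a \emph{universal} algebraic identity on $\mathbb{R}^n$, hence controls the residual cross terms regardless of the sign of $\sigma_k(\lambda)$. This is precisely the phenomenon highlighted in Fact \ref{fact} and attributed to Guan-Zhang \cite{GZ19}: moving from $\Gamma_k$ to $\Gamma_{k-1}$ is the novelty, and it depends on the special structure of the operator $\sigma_k/\sigma_{k-1}$ rather than on general hyperbolic polynomial theory.
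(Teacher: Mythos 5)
A preliminary remark on the comparison itself: the paper does not prove Proposition \ref{Con}. It is stated among the preliminaries ``which could be found in \cite{L96, S05}'', and the decisive second statement (concavity of $\sigma_k/\sigma_{k-1}$ on the larger cone $\Gamma_{k-1}$) is in effect quoted from Guan--Zhang \cite{GZ19} (cf.\ the pointer to Proposition 2.2 there given with Lemma \ref{lem2.5}). So your citation-style handling of the first assertion matches the paper's own treatment; the issue is whether your sketch of the second assertion amounts to a proof.

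As written, it does not. Your reduction is correct up to a point: restricting to a line, the identity $B^3(A/B)''=B^2A''-ABB''+2A(B')^2-2BA'B'$ and the formulas for $A'(0),A''(0),B'(0),B''(0)$ are right, and since $B=\sigma_{k-1}>0$ on $\Gamma_{k-1}$ it indeed suffices to show this expression is nonpositive. But the sentence ``regrouping terms and invoking Newton's inequality \eqref{N} for appropriate minors then yields the desired sign'' is exactly the content of the proposition and is not an argument. Concretely, the expression is a quadratic form $\sum_{i,j}Q_{ij}\xi_i\xi_j$ whose diagonal entries are $Q_{ii}=2\sigma_{k-2}(\lambda|i)\bigl[\sigma_{k}(\lambda|i)\sigma_{k-2}(\lambda|i)-\sigma_{k-1}(\lambda|i)^2\bigr]$, and these are handled by Newton's inequality together with $\sigma_{k-2}(\lambda|i)>0$ on $\Gamma_{k-1}$; however, negative semidefiniteness of the full matrix requires dominating the off-diagonal entries, which involve $\sigma_{k-2}(\lambda|ij)$, $\sigma_{k-3}(\lambda|ij)$ and the mixed products $\sigma_{k-1}(\lambda|i)\sigma_{k-2}(\lambda|j)$, and no regrouping accomplishing this is exhibited. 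The known arguments (Marcus--Lopes superadditivity on the positive orthant, the extension to $\Gamma_{k-1}$ in \cite{GZ19}, or hyperbolic-polynomial methods) are genuinely more structured than termwise Newton inequalities, so the decisive step of your proposal is missing. A smaller quibble on part one: the Newton--MacLaurin chain \eqref{NM} does not by itself ``upgrade'' concavity of $\sigma_k^{1/k}$ to concavity of $(\sigma_k/\sigma_l)^{1/(k-l)}$; a cleaner route is to note that once the second assertion is established, the first follows by writing $(\sigma_k/\sigma_l)^{1/(k-l)}$ as the geometric mean of the quotients $\sigma_j/\sigma_{j-1}$, $l<j\le k$, each of which is positive and concave on $\Gamma_k\subset\Gamma_{j-1}$.
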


We recall the G{\aa}rding's inequality (see \cite{CNS85}).

\begin{lemma}
If $\lambda, \mu \in \Gamma_k$, then for any $k \in \{1, 2, ..., n\}$
\begin{eqnarray*}
\sum_{i=1}^{n}\mu_i\sigma_{k-1}(\lambda|i)\geq k[\sigma_{k}(\mu)]^{\frac{1}{k}}[\sigma_{k}(\lambda)]^{1-\frac{1}{k}}.
\end{eqnarray*}
In particular,
\begin{eqnarray}\label{G-i}
\sum_{i=1}^{n}\mu_i\sigma_{k-1}(\lambda|i)>0.
\end{eqnarray}
\end{lemma}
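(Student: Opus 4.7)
The plan is to deduce the inequality as an infinitesimal form of the concavity of $\sigma_k^{1/k}$ on $\Gamma_k$, which is available to us as the case $l=0$ of Proposition \ref{Con}. Concretely, set $f(\lambda) = \sigma_k(\lambda)^{1/k}$; then $f$ is concave on the convex cone $\Gamma_k$, and it suffices to evaluate the defining inequality of concavity along the segment joining $\lambda$ to $\mu$.

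Since $\Gamma_k$ is convex and $\lambda,\mu\in\Gamma_k$, the segment $\lambda_t := \lambda + t(\mu-\lambda)$ lies in $\Gamma_k$ for $t\in[0,1]$. Concavity gives
\begin{equation*}
f(\lambda_t) \;\geq\; (1-t)f(\lambda) + t f(\mu),
\end{equation*}
so dividing by $t>0$ and letting $t\to 0^+$ yields the one-sided derivative bound $\tfrac{d}{dt}\big|_{t=0} f(\lambda_t)\geq f(\mu)-f(\lambda)$. Computing the left side by the chain rule, and using the identity $\sum_i \lambda_i\sigma_{k-1}(\lambda\,|\,i) = k\sigma_k(\lambda)$ from Proposition \ref{prop2.1}, one obtains
\begin{equation*}
\tfrac{1}{k}\sigma_k(\lambda)^{\frac{1}{k}-1}\sum_{i=1}^n \mu_i\,\sigma_{k-1}(\lambda\,|\,i) \;-\; \sigma_k(\lambda)^{\frac{1}{k}} \;\geq\; \sigma_k(\mu)^{\frac{1}{k}}-\sigma_k(\lambda)^{\frac{1}{k}}.
\end{equation*}
The $\sigma_k(\lambda)^{1/k}$ terms cancel, and multiplying through by $k\,\sigma_k(\lambda)^{1-\frac{1}{k}}>0$ produces exactly the target inequality. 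The strict positivity statement \eqref{G-i} then follows at once, since $\sigma_k(\mu)^{1/k}\sigma_k(\lambda)^{1-1/k}>0$ on $\Gamma_k\times\Gamma_k$.

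I do not expect a serious obstacle here: the concavity input is exactly Proposition \ref{Con}, the differentiation at $t=0$ is legitimate because $\sigma_k$ is a smooth polynomial and $f$ is smooth on the open set $\{\sigma_k>0\}\supset\Gamma_k$, and the algebraic simplification relies on nothing beyond the Euler-type identity already recorded in Proposition \ref{prop2.1}. The one point worth flagging is that the argument only uses $\mu\in\Gamma_k$ through the need to have $\sigma_k(\mu)\geq 0$ (so that $\sigma_k(\mu)^{1/k}$ is defined and real); in fact the proof extends to $\mu\in\overline{\Gamma_k}$ by continuity, which is occasionally useful but not needed for the stated form. If a self-contained treatment is preferred, one can alternatively induct on $k$ using the recursion $\sigma_k(\lambda) = \sigma_k(\lambda\,|\,n) + \lambda_n\sigma_{k-1}(\lambda\,|\,n)$ and Cauchy--Schwarz, but the concavity route above is the shortest given what is already available in the paper.
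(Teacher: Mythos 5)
Your argument is correct and complete: the inequality is indeed the first-order (supporting hyperplane) form of the concavity of $\sigma_k^{1/k}$ on $\Gamma_k$, which is precisely the $l=0$ case of Proposition \ref{Con}, combined with the Euler identity $\sum_i\lambda_i\sigma_{k-1}(\lambda|i)=k\sigma_k(\lambda)$ from Proposition \ref{prop2.1}; the convexity of $\Gamma_k$ and the smoothness of $\sigma_k^{1/k}$ on the open set $\{\sigma_k>0\}$ justify the differentiation at $t=0$, and positivity \eqref{G-i} is immediate since $\sigma_k(\lambda),\sigma_k(\mu)>0$ on $\Gamma_k$. The paper itself gives no proof, merely recalling the lemma from \cite{CNS85}, and your derivation is exactly the classical argument behind that reference, so nothing further is needed.
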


\begin{lemma}\label{k-i}
If $\lambda \in \Gamma_k$, then for any $i \in \{1, 2, ..., n\}$ and $1\leq l<k\leq n$ we have
\begin{eqnarray}\label{initial}
\frac{\sigma_{k-1}(\lambda|i)}{\sigma_{l-1}(\lambda|i)}>\frac{\sigma_{k}(\lambda)}{\sigma_{l}(\lambda)}.
\end{eqnarray}
In particular, if $\lambda \in \Gamma_{k-1}$, we have
\begin{eqnarray}\label{initial-1}
\frac{\sigma_{k-1}(\lambda|i)}{\sigma_{k-2}(\lambda|i)}\geq\frac{\sigma_{k}(\lambda)}{\sigma_{k-1}(\lambda)}.
\end{eqnarray}
\end{lemma}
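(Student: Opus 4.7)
The plan is to reduce both inequalities to statements involving only the reduced $(n-1)$-tuple $\lambda|i$, and then invoke Proposition \ref{prop2.4} (Newton--MacLaurin) in the first case and Proposition \ref{prop2.4-0} (Newton's inequality) in the second. To carry out the reduction, I would apply the identity $\sigma_m(\lambda)=\sigma_m(\lambda|i)+\lambda_i\,\sigma_{m-1}(\lambda|i)$ from \eqref{2.1-1} to both $\sigma_k(\lambda)$ and $\sigma_l(\lambda)$; after expanding, the two $\lambda_i$-terms cancel against one another and one obtains the key identity
\begin{equation*}
\sigma_l(\lambda)\,\sigma_{k-1}(\lambda|i) - \sigma_k(\lambda)\,\sigma_{l-1}(\lambda|i) = \sigma_l(\lambda|i)\,\sigma_{k-1}(\lambda|i) - \sigma_k(\lambda|i)\,\sigma_{l-1}(\lambda|i).
\end{equation*}
It is a standard fact that $\lambda\in\Gamma_k$ implies $\lambda|i\in\Gamma_{k-1}$ as an element of $\mathbb{R}^{n-1}$, so the positivity of $\sigma_{l-1}(\lambda|i)$, $\sigma_l(\lambda|i)$, $\sigma_{k-1}(\lambda|i)$, and of $\sigma_l(\lambda)$ is automatic; this reduces \eqref{initial} to proving $\sigma_l(\lambda|i)\sigma_{k-1}(\lambda|i)>\sigma_k(\lambda|i)\sigma_{l-1}(\lambda|i)$.

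For this reduced inequality, I would apply Proposition \ref{prop2.4} with $n$ replaced by $n-1$ and with $r=k-1$, $s=l-1$, which yields the non-strict inequality
\begin{equation*}
\sigma_k(\lambda|i)\,\sigma_{l-1}(\lambda|i)\cdot\frac{k(n-l)}{l(n-k)} \;\le\; \sigma_{k-1}(\lambda|i)\,\sigma_l(\lambda|i)
\end{equation*}
after clearing the binomial denominators $C_{n-1}^m$. Since $k>l$, the coefficient $\frac{k(n-l)}{l(n-k)}$ is strictly greater than $1$, and this ``binomial gap'' upgrades the non-strict Newton--MacLaurin inequality to the strict one that is needed (the corner case $\sigma_k(\lambda|i)\le 0$ is immediate, since then the left-hand side of the target is non-positive while the right-hand side is strictly positive).

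For \eqref{initial-1}, the reduction above applied with $l=k-1$ gives
$\sigma_{k-1}(\lambda)\sigma_{k-1}(\lambda|i) - \sigma_k(\lambda)\sigma_{k-2}(\lambda|i) = \sigma_{k-1}(\lambda|i)^2 - \sigma_k(\lambda|i)\sigma_{k-2}(\lambda|i)$, and the right-hand side is non-negative by Newton's inequality \eqref{N} applied to $\lambda|i\in\Gamma_{k-2}$. Since $\lambda\in\Gamma_{k-1}$ forces $\sigma_{k-2}(\lambda|i)>0$ and $\sigma_{k-1}(\lambda)>0$, division gives the claim. I expect the main delicate point to be keeping track of the strict-versus-non-strict distinction: the binomial gap $\frac{k(n-l)}{l(n-k)}>1$ is what rescues strictness in \eqref{initial}, but in the borderline index range of \eqref{initial-1} one cannot apply Newton--MacLaurin (we are not in $\Gamma_k$), and the best one can use is Newton's non-strict inequality $\sigma_{k-1}^2\ge\sigma_{k-2}\sigma_k$ --- which, as easy examples with a vanishing component show, can indeed hold with equality, and this is precisely why \eqref{initial-1} is stated with $\ge$ rather than $>$.
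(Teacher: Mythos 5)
Your proof is correct, and it is a close cousin of the paper's rather than a verbatim copy. The paper also splits on the sign of $\sigma_k(\lambda|i)$ and uses exactly the same Newton--MacLaurin ``gap'' $\frac{k(n-l)}{l(n-k)}>1$ (resp.\ Newton's inequality \eqref{N} when $l=k-1$) to control the quantity $\sigma_{k-1}(\lambda|i)\sigma_l(\lambda|i)-\sigma_k(\lambda|i)\sigma_{l-1}(\lambda|i)$; the difference is in how the lemma is reduced to that quantity. The paper views $h(\lambda_i)=\sigma_k(\lambda)/\sigma_l(\lambda)$ as a function of the single entry $\lambda_i$, notes that the numerator of $h'(\lambda_i)$ is precisely the right-hand side of your cross-multiplication identity, and then lets $\lambda_i\to+\infty$, where $h\to\sigma_{k-1}(\lambda|i)/\sigma_{l-1}(\lambda|i)$, so \eqref{initial} follows from strict monotonicity. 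You instead observe that after clearing the positive denominators the $\lambda_i$-terms cancel identically, so the claim is literally equivalent to the reduced inequality; this removes the monotonicity-and-limit step and gives a marginally shorter, more self-contained argument, while the essential input (Newton--MacLaurin with the binomial gap for strictness, Newton's non-strict inequality in the borderline case $l=k-1$, and the standard fact that $\lambda\in\Gamma_k$ forces $\sigma_j(\lambda|i)>0$ for $j\le k-1$) is identical. Two cosmetic remarks: Newton's inequality \eqref{N} holds for arbitrary real tuples, so the hypothesis $\lambda|i\in\Gamma_{k-2}$ you invoke in the proof of \eqref{initial-1} is only needed for the positivity $\sigma_{k-2}(\lambda|i)>0$ used when cross-multiplying, not for the inequality itself; and when $k=n$ one has $\sigma_k(\lambda|i)=0$ automatically, so that case is absorbed into your corner case $\sigma_k(\lambda|i)\le 0$ --- worth stating explicitly, since your Newton--MacLaurin step in dimension $n-1$ requires $k\le n-1$.
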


\begin{proof}
We get from \eqref{2.1-1}
\begin{eqnarray*}
h(\lambda_i):=\frac{\sigma_{k}(\lambda)}{\sigma_{l}(\lambda)}=
\frac{\lambda_i\sigma_{k-1}(\lambda|i)+\sigma_{k}(\lambda|i)}{\lambda_i\sigma_{l-1}(\lambda|i)+\sigma_{l}(\lambda|i)}.
\end{eqnarray*}
A direct computation implies
\begin{eqnarray*}
\frac{d}{d \lambda_i}h(\lambda_i)=\frac{\sigma_{k-1}(\lambda|i)\sigma_{l}(\lambda|i)
-\sigma_{k}(\lambda|i)\sigma_{l-1}(\lambda|i)}{\sigma^{2}_{l}(\lambda)}.
\end{eqnarray*}
If $\sigma_{k}(\lambda|i)\leq 0$, it is easy to see $\frac{d}{d \lambda_i}h(\lambda_i)>0$.
Otherwise, $\sigma_{k}(\lambda|i)>0$. Thus, $\lambda$ with $\lambda_i=0$ belongs
to $\Gamma_k$. So, using the generalized Newton-MacLaurin inequality \eqref{NM}, we have
\begin{eqnarray*}
\frac{{\sigma _k(\lambda|i)}}{{\sigma_{k-1}(\lambda|i)}}<\frac{k(n-l)}{l(n-k)}\frac{{\sigma _k(\lambda|i)}}{{\sigma_{k-1}(\lambda|i)}}
\leq\frac{{\sigma _l(\lambda|i)}}{{\sigma_{l-1}(\lambda|i)}}.
\end{eqnarray*}
Thus, $\frac{d}{d \lambda_i}h(\lambda_i)>0$. Therefore,
\begin{eqnarray*}
\frac{\sigma_{k}(\lambda)}{\sigma_{l}(\lambda)}<\lim_{\lambda_i\rightarrow +\infty}h(\lambda_i)=\frac{\sigma_{k-1}(\lambda|i)}{\sigma_{l-1}(\lambda|i)}.
\end{eqnarray*}
Hence, we obtain \eqref{initial}.
For the special case $l=k-1$, we have $\frac{d}{d \lambda_i}h(\lambda_i)\geq0$ by the Newton inequality \eqref{N}.
Thus, the inequality \eqref{initial-1} follows as before.
\end{proof}

At last, we list the following well-known result (See \cite{And94}).

\begin{lemma}
If $A=(a_{ij})$ is a Hermitian matrix, $\lambda_i=\lambda_i(A)$
is one of its eigenvalues ($i = 1, ..., n$) and
$F=F(A)=f(\lambda(A))$ is a symmetric function of $\lambda_1, ...,
\lambda_n$, then for any Hermitian matrix $B= (b_{ij})$, we
have the following formulas:
\begin{eqnarray}\label{f-2}
\frac{\partial^2 F}{\partial a_{ij}\partial
a_{st}}b_{ij}b_{st}=\frac{\partial^2 f}{\partial\lambda_p
\partial\lambda_q}b_{pp}b_{qq}+2\sum_{p<q}\frac{\frac{\partial
f}{\partial \lambda_p}-\frac{\partial f}{\partial
\lambda_q}}{\lambda_p-\lambda_q}b^{2}_{pq}.
\end{eqnarray}
In addition, if $f$ is a concave function and
\begin{eqnarray*}
\lambda_1\leq \lambda_2\leq ... \leq\lambda_n,
\end{eqnarray*}
then we have
\begin{eqnarray}\label{f-3}
f_1\geq f_2\geq ... \geq f_n,
\end{eqnarray}
where $f_i=\frac{\partial f}{\partial \lambda_i}.$
\end{lemma}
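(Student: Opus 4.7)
The plan is to prove the identity (2.9) first by perturbation theory and then deduce the monotonicity (2.10) from concavity and symmetry of $f$. Both facts are invariant under unitary conjugation of $A$ (since $F$ depends only on eigenvalues), so I would reduce the identity to the case $A = \mathrm{diag}(\lambda_1,\dots,\lambda_n)$ by choosing a unitary frame that diagonalizes $A$, noting that both sides of (2.9) transform in the same way under $A\mapsto U^*AU$ and $B\mapsto U^*BU$.

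To prove (2.9) at a diagonal $A$, first assume the eigenvalues are pairwise distinct and consider the one-parameter family $A+tB$, whose eigenvalues $\mu_p(t)$ depend smoothly on $t$ with $\mu_p(0)=\lambda_p$. Standard Rellich-Kato perturbation theory gives
\begin{equation*}
\mu_p'(0)=b_{pp},\qquad \mu_p''(0)=2\sum_{q\neq p}\frac{|b_{pq}|^2}{\lambda_p-\lambda_q}.
\end{equation*}
Writing $F(A+tB)=f(\mu(t))$ and differentiating twice at $t=0$ yields
\begin{equation*}
\frac{d^2}{dt^2}F(A+tB)\Big|_{t=0}=\sum_{p,q}\frac{\partial^2 f}{\partial\lambda_p\partial\lambda_q}b_{pp}b_{qq}+\sum_p\frac{\partial f}{\partial\lambda_p}\,\mu_p''(0).
\end{equation*}
Substituting the expression for $\mu_p''(0)$ and symmetrizing the double sum in $p,q$ (pair $p<q$ with $p>q$, using $|b_{pq}|^2=|b_{qp}|^2$ and $\lambda_p-\lambda_q=-(\lambda_q-\lambda_p)$) collects the off-diagonal contributions into the stated $2\sum_{p<q}\frac{f_p-f_q}{\lambda_p-\lambda_q}b_{pq}^2$. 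Comparing with the LHS of (2.9), which is $\frac{d^2}{dt^2}F(A+tB)|_{t=0}$ by definition, proves the formula.

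The case of repeated eigenvalues is handled by continuity: if $\lambda_p\to\lambda_q$ then, by symmetry of $f$, one has $f_p-f_q\to 0$ at the same rate, so $\frac{f_p-f_q}{\lambda_p-\lambda_q}$ extends smoothly to $f_{pp}-f_{pq}$ by L'Hospital's rule, matching the limit of $\mu_p''(0)$; this allows one to pass to the limit from the generic case. This continuity step is the one technical point I would be careful about, but it is routine.

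For (2.10), fix $p<q$, so $\lambda_p\leq\lambda_q$, and consider $g(t):=f(\lambda+t(e_p-e_q))$, where $e_j$ are the standard basis vectors of $\mathbb{R}^n$. Concavity of $f$ gives $g''(t)\leq 0$, so $g'$ is nonincreasing. At $t^*=(\lambda_q-\lambda_p)/2\geq 0$, the $p$-th and $q$-th coordinates of $\lambda+t^*(e_p-e_q)$ coincide, and by symmetry of $f$ under swapping these coordinates the two partial derivatives agree there; hence $g'(t^*)=f_p-f_q|_{t^*}=0$. Since $g'$ is nonincreasing and $t^*\geq 0$, we conclude $g'(0)=f_p(\lambda)-f_q(\lambda)\geq 0$, which is exactly (2.10). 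The main conceptual obstacle — matching up the perturbation expansion with the stated identity in the presence of coincident eigenvalues — is resolved by the continuity argument, and the rest is a direct computation.
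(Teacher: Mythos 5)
The paper does not prove this lemma at all: it is quoted as a known result with a citation to Andrews' paper, so there is no internal argument to compare against. Your proof is correct and is essentially the standard one from the literature: reduce by unitary invariance to diagonal $A$, use second-order Rellich--Kato perturbation theory ($\mu_p'(0)=b_{pp}$, $\mu_p''(0)=2\sum_{q\neq p}|b_{pq}|^2/(\lambda_p-\lambda_q)$) together with the chain rule, symmetrize the off-diagonal sum, and handle coincident eigenvalues by continuity; the monotonicity \eqref{f-3} then follows from your clean one-dimensional argument restricting the concave symmetric $f$ to the segment joining $\lambda$ to the vector with the $p$-th and $q$-th entries swapped, where the derivative $f_p-f_q$ vanishes at the midpoint by symmetry. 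Three small points worth making explicit if you write this up: (i) for Hermitian $B$ the off-diagonal term should be read as $|b_{pq}|^2$ (the statement's $b_{pq}^2$ is the real-symmetric convention), which is what your computation actually produces; (ii) the continuity step at repeated eigenvalues implicitly uses that $F(A)=f(\lambda(A))$ is $C^2$ in $A$ when $f$ is smooth and symmetric, so both sides of \eqref{f-2} are continuous and it suffices to verify the identity on the dense set of matrices with simple spectrum, with the quotient $(f_p-f_q)/(\lambda_p-\lambda_q)$ interpreted as its limit $f_{pp}-f_{pq}$ there; (iii) in the application $f$ lives on a symmetric convex cone such as $\Gamma_{k-1}$ rather than all of $\mathbb{R}^n$, so you should note that your segment $\lambda+t(e_p-e_q)$, $0\le t\le t^*$, lies on the chord between $\lambda$ and its $p,q$-swap and hence stays in the domain by convexity and symmetry. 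With these remarks your argument is a complete, self-contained proof of the cited lemma.
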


Let $\lambda(a_{i\overline{j}})$ denote the eigenvalues of a Hermitian matrix $(a_{i\overline{j}})$.
Define $\sigma_k(a_{i\overline{j}})=\sigma_k(\lambda(a_{i\overline{j}}))$.
This definition can be naturally extended to K\"ahler manifolds (and more generally, Hermitian manifolds).
Let $\mathcal{A}^{1,1}(M)$ be
the space of real smooth $(1, 1)$-forms on $(M, \omega)$.
For any $\chi \in \mathcal{A}^{1,1}(M)$, in a local normal coordinate
system of $M$ with respect with $\omega$, we have
\begin{equation*}
\chi=\frac{\sqrt{-1}}{2}\chi_{i\overline{j}}dz^i\wedge d\overline{z}^{j}.
\end{equation*}

\begin{definition}
We define $\sigma_k(\chi)$ with
respect to $\omega$ as
\begin{equation*}
\sigma_k(\chi)=\sigma_k(\chi_{i\overline{j}})=\sigma_k(\lambda(\chi_{i\overline{j}})).
\end{equation*}
The definition of $\sigma_k$ is independent of the choice of local normal coordinate
system. In fact, $\sigma_k$ can be
defined without the use of local normal coordinate by
\begin{equation*}
\sigma_{k}(\chi)=C_{n}^{k}\frac{\chi^k\wedge \omega^{n-k}}{\omega^n},
\end{equation*}
where $C_{n}^{k}=\frac{n!}{(n-k)!k!}$.
We also define the G{\aa}rding's cone on $M$ by
\begin{equation*}
\Gamma_k(M)=\{ \chi  \in \mathcal{A}^{1,1}(M):\sigma _i (\chi)>0, \ \forall \ 1 \le i \le k\}.
\end{equation*}
\end{definition}

Using the above notation, we can rewrite equation \eqref{K-eq} as the following local form:
\begin{eqnarray}\label{K-eq1}
\sigma_k(\chi_{u})
=\sum_{l=0}^{k-1}\beta_l(z) \sigma_{l}(\chi_{u}),
\end{eqnarray}
where $\beta_l(z)=\frac{C_{n}^{k}}{C_{n}^{l}}\alpha_l(z)$.

Moreover, we also have a local version of the cone condition \eqref{cone}.
\begin{lemma}\label{cone-local}
$\chi \in \mathcal{C}_k(\omega)$ is equivalent to
\begin{eqnarray}\label{cone-1}
\sigma_{k-1}(\chi|i)
-\sum_{l=1}^{k-1}\beta_l\sigma_{l-1}(\chi|i)>0
\end{eqnarray}
for any $i \in \{1, 2, ..., n\}$, where $(\chi|i)$ denotes the matrix obtained by deleting the
$i$-th row and $i$-th column of $\chi$, and $\beta_l(z)=\frac{C_{n}^{k}}{C_{n}^{l}}\alpha_l(z)$
is denoted as before.
\end{lemma}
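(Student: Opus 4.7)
The plan is to translate the global $(n-1,n-1)$-form inequality appearing in the definition of $\mathcal{C}_k(\omega)$ into a pointwise scalar condition by diagonalising $\chi$. Fix an arbitrary $p \in M$ and choose local coordinates that are normal for $\omega$ at $p$ and simultaneously diagonalise $\chi$ at $p$, so that
\[
\omega = \tfrac{\sqrt{-1}}{2}\sum_{j=1}^n dz^j\wedge d\overline{z}^j,\qquad \chi = \tfrac{\sqrt{-1}}{2}\sum_{j=1}^n \lambda_j\, dz^j\wedge d\overline{z}^j.
\]
Write $\Omega_i := \bigl(\tfrac{\sqrt{-1}}{2}\bigr)^{n-1}\bigwedge_{j\neq i}dz^j\wedge d\overline{z}^j$ for the standard basis of $(n-1,n-1)$-forms; in this basis the natural notion of positivity for a real $(n-1,n-1)$-form is coefficient-by-coefficient.

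The central computation is the identity
\[
\chi^{m}\wedge \omega^{n-1-m} \;=\; m!\,(n-1-m)!\,\sum_{i=1}^n \sigma_m(\lambda \mid i)\,\Omega_i,\qquad 0\le m\le n-1,
\]
which follows by expanding each wedge power with the multinomial rule and grouping terms by the unique missing index $i\notin S\cup T$, where $S$ and $T$ record the factors coming from $\chi$ and $\omega$ respectively; the inner sum over $S\subset\{1,\dots,n\}\setminus\{i\}$ with $|S|=m$ produces exactly $\sigma_m(\lambda\mid i)$.

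Applying this identity with $m=k-1$ on the left and $m=l-1$ to each term on the right of the cone inequality $k\chi^{k-1}\wedge\omega^{n-k}>\sum_{l=1}^{k-1}l\alpha_l\,\chi^{l-1}\wedge\omega^{n-l}$, and reading off the coefficient of each $\Omega_i$, the inequality becomes
\[
k!\,(n-k)!\,\sigma_{k-1}(\lambda\mid i) \;>\; \sum_{l=1}^{k-1} l!\,(n-l)!\,\alpha_l\,\sigma_{l-1}(\lambda\mid i),\qquad i=1,\dots,n.
\]
Dividing through by $k!(n-k)!$ and using $\beta_l(z)=\tfrac{C_n^k}{C_n^l}\alpha_l(z)=\tfrac{l!(n-l)!}{k!(n-k)!}\alpha_l(z)$ yields exactly \eqref{cone-1}. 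Every step is reversible, so this gives the claimed equivalence.

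The argument is entirely pointwise, so no analytic input is required. The only nontrivial step is the combinatorial identity above, which is pure bookkeeping with factorials: one has to track the $m!$ from $\chi^m$, the $(n-1-m)!$ from $\omega^{n-1-m}$, and recognise that, for each fixed missing index $i$, the allowable index sets $S\subset\{1,\dots,n\}\setminus\{i\}$ with $|S|=m$ recover $\sigma_m(\lambda\mid i)$. This constitutes essentially the only obstacle.
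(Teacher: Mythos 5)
Your proof is correct and follows essentially the same route as the paper: both arguments diagonalise $\chi$ against $\omega$ at a point and read off the coefficient of each basis $(n-1,n-1)$-form, obtaining $(l-1)!\,(n-l)!\,\sigma_{l-1}(\chi|i)$ for $\chi^{l-1}\wedge\omega^{n-l}$, after which the normalisation $\beta_l=\frac{C_n^k}{C_n^l}\alpha_l$ gives \eqref{cone-1}. The only difference is that you spell out the multinomial bookkeeping that the paper states without derivation.
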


\begin{proof}
The equality \eqref{cone-1} follows directly if we
observe that coefficient of $(n-1, n-1)$ form
$\prod_{j=1, j\neq i}^{n}dz^{j}d\overline{z}^{j}$
in $\chi^{l-1}\wedge \omega^{n-l}$ is
\begin{eqnarray*}
(l-1)!(n-l)!\sigma_{l-1}(\chi|i)=\frac{1}{l}\frac{n!}{C_{n}^{l}}\sigma_{l-1}(\chi|i).
\end{eqnarray*}
\end{proof}

\subsection{The ellipticity and concavity}

To make the equation \eqref{K-eq1} elliptic and concave,
we follow an important observation by Guan-Zhang \cite{GZ19} to rewrite \eqref{K-eq1} as the form:
\begin{eqnarray}\label{K-eq2}
F(u_{i\overline{j}}, z):=\frac{\sigma_k(\chi_{u})}{\sigma_{k-1}(\chi_{u})}
-\sum_{l=0}^{k-1}\beta_l(z)\frac{ \sigma_{l}(\chi_{u})}{\sigma_{k-1}(\chi_{u})}=\beta_{k-1}(z).
\end{eqnarray}

Then, as a corollary of Propositions \ref{Ell} and \ref{Con}, we
obtain (see also Proposition 2.2 in \cite{GZ19}):

\begin{lemma}\label{lem2.5}
If $u$ is a $C^2$ function with $\chi_u\in \Gamma_{k-1}(M)$, and $\beta_l(z)$
($0 \leq l \leq k-2$) are nonnegative, then the operator $F$ is elliptic and concave. Moreover,
if $\sum_{l=0}^{k-2}\beta_l(z)>0$ for all $z \in M$, the operator $F$ is strictly elliptic.
\end{lemma}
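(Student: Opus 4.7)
Since $\chi_u = \chi_0 + \tfrac{\sqrt{-1}}{2}\partial\bar\partial u$ depends linearly on the Hermitian matrix $(u_{i\bar j})$ and $F$ depends on this matrix only through its eigenvalues $\lambda(\chi_u) = (\lambda_1,\dots,\lambda_n)$, standard facts about symmetric functions of Hermitian matrices (as recalled in the lemma containing \eqref{f-2}) reduce the assertion to proving that the symmetric function
\[
f(\lambda) \;:=\; \frac{\sigma_k(\lambda)}{\sigma_{k-1}(\lambda)} - \sum_{l=0}^{k-2}\beta_l(z)\,\frac{\sigma_l(\lambda)}{\sigma_{k-1}(\lambda)}
\]
is nondecreasing in each $\lambda_i$ on the cone $\Gamma_{k-1}$ (strictly so when $\sum_{l=0}^{k-2}\beta_l(z)>0$), and concave on $\Gamma_{k-1}$.

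\textbf{Ellipticity.} For the leading summand, the second part of Proposition \ref{Ell} gives $\partial(\sigma_k/\sigma_{k-1})/\partial\lambda_i \geq 0$ on $\Gamma_{k-1}$. For each $0 \leq l \leq k-2$, I apply the first (strict) part of Proposition \ref{Ell} with the pair of indices $(k-1, l)$: since $\lambda \in \Gamma_{k-1}$, both $\sigma_{k-1}/\sigma_l > 0$ and $\partial(\sigma_{k-1}/\sigma_l)/\partial\lambda_i > 0$ on $\Gamma_{k-1}$. Taking reciprocals then gives $\partial(\sigma_l/\sigma_{k-1})/\partial\lambda_i < 0$, so the contribution $-\beta_l\,\partial(\sigma_l/\sigma_{k-1})/\partial\lambda_i$ is $\geq 0$, and is strictly positive whenever $\beta_l > 0$. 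Summing, $\partial f/\partial\lambda_i \geq 0$ on $\Gamma_{k-1}$, with strict inequality whenever some $\beta_l > 0$ with $0 \leq l \leq k-2$, i.e., whenever $\sum_{l=0}^{k-2}\beta_l > 0$.

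\textbf{Concavity.} The summand $\sigma_k/\sigma_{k-1}$ is concave on $\Gamma_{k-1}$ by the second part of Proposition \ref{Con}. For each $0 \leq l \leq k-2$, the first part of Proposition \ref{Con} applied with indices $(k-1, l)$ produces the concave and strictly positive function $g_l := (\sigma_{k-1}/\sigma_l)^{1/(k-1-l)}$ on $\Gamma_{k-1}$. Since $t \mapsto t^{-(k-1-l)}$ is convex and decreasing on $(0,\infty)$, the composition rule ``convex decreasing $\circ$ concave $=$ convex'' yields that $\sigma_l/\sigma_{k-1} = g_l^{-(k-1-l)}$ is convex on $\Gamma_{k-1}$. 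With $\beta_l \geq 0$, each term $-\beta_l\,\sigma_l/\sigma_{k-1}$ is therefore concave, and $f$ is a finite sum of concave functions.

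\textbf{Main obstacle.} The delicate point is the convexity of $\sigma_l/\sigma_{k-1}$ for $l \leq k-2$: a naive reciprocal of a concave positive function is not in general convex, so one has to route through the power $g_l$ and invoke Proposition \ref{Con} at the right indices. The role of the hypothesis $\sum_{l=0}^{k-2}\beta_l > 0$ for \emph{strict} ellipticity is also explained by the above: in the enlarged cone $\Gamma_{k-1}$, the partial of $\sigma_k/\sigma_{k-1}$ is only guaranteed to be $\geq 0$ (not $>0$), so strict positivity has to come from the lower-order quotients, which in turn requires at least one of the $\beta_l$ ($0 \leq l \leq k-2$) to be positive.
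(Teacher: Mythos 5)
Your argument is correct and is essentially the paper's own route: the paper derives Lemma \ref{lem2.5} directly as a corollary of Propositions \ref{Ell} and \ref{Con} (citing Guan--Zhang), and your write-up simply fills in the details of that corollary — applying those propositions at the indices $(k-1,l)$ and handling the lower-order quotients $\sigma_l/\sigma_{k-1}$ via the reciprocal and the convex-decreasing-composed-with-concave rule, then passing from eigenvalues to the matrix by the standard facts around \eqref{f-2}. No gaps; nothing further is needed.
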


\subsection{Some key lemmas}

In this subsection, we prove some inequalities and lemmas which will play an important
role in the establishment of the a priori estimates. In the following,
for $\lambda(z)=(\lambda_1(z), ..., \lambda_n(z)) \in C^{0}(M, \mathbb{R}^n)$, we set
\begin{eqnarray}\label{f}
f(\lambda(z), z):=\frac{\sigma_k(\lambda(z))}{\sigma_{k-1}(\lambda(z))}
-\sum_{l=0}^{k-1}\beta_l(z)\frac{ \sigma_{l}(\lambda(z))}{\sigma_{k-1}(\lambda(z))}=\beta_{k-1}(z)
\end{eqnarray}
and denote by $f_i(\lambda)=\frac{\partial f}{\partial \lambda_i}(\lambda).$

\begin{lemma}
Let $\beta_0(z), ..., \beta_{k-2}(z)$ be nonnegative functions on $M$.
Assume that $\lambda=(\lambda_1, ..., \lambda_n) \in \Gamma_{k-1}$ and $\mu=(\mu_1, ..., \mu_n) \in \Gamma_{k-1}$,
then we have
\begin{eqnarray}\label{uii}
\sum_{i}f_i(\lambda)\mu_i\geq f(\mu)+(k-l)\sum_{l=0}^{k-1} \beta_l\frac{\sigma_l(\lambda)}{\sigma_{k-1}(\lambda)}.
\end{eqnarray}
\end{lemma}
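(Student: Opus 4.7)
The inequality as written has what looks like a typographical slip: the factor $(k-l)$ should sit inside the sum, so I read the target as
\[
\sum_i f_i(\lambda)\,\mu_i \;\geq\; f(\mu) + \sum_{l=0}^{k-1}(k-l)\,\beta_l\,\frac{\sigma_l(\lambda)}{\sigma_{k-1}(\lambda)}.
\]
My plan is to prove this in two clean steps: a concavity step plus a homogeneity (Euler) step.

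\smallskip

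\textbf{Step 1 (concavity).} Lemma \ref{lem2.5} asserts that under the standing hypothesis $\beta_0,\dots,\beta_{k-2}\ge 0$, the operator $F$ is concave on $\Gamma_{k-1}$; at the eigenvalue level this is exactly the statement that $\lambda\mapsto f(\lambda,z)$ is concave on the (convex) cone $\Gamma_{k-1}$. Concretely, Proposition \ref{Con} gives concavity of $\sigma_k/\sigma_{k-1}$ on $\Gamma_{k-1}$, while the generalised Newton-MacLaurin inequality \eqref{NM} applied to the pair $(k-1,l)$ for each $0\le l\le k-2$ shows that $(\sigma_l/\sigma_{k-1})^{1/(k-1-l)}$ is convex on $\Gamma_{k-1}$; raising to the power $k-1-l\ge 1$ preserves convexity of a nonnegative function, so each $-\beta_l\,\sigma_l/\sigma_{k-1}$ is concave. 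Adding, $f$ is concave, and so for any $\lambda,\mu\in\Gamma_{k-1}$,
\[
f(\mu) \;\le\; f(\lambda) + \sum_i f_i(\lambda)\,(\mu_i-\lambda_i),
\]
i.e.
\[
\sum_i f_i(\lambda)\,\mu_i \;\ge\; f(\mu) - f(\lambda) + \sum_i \lambda_i\, f_i(\lambda).
\]

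\smallskip

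\textbf{Step 2 (Euler homogeneity).} The building blocks of $f$ are homogeneous: $\sigma_k/\sigma_{k-1}$ has degree $1$ and $\sigma_l/\sigma_{k-1}$ has degree $l-k+1$. Euler's identity $\sum_i \lambda_i (\partial g/\partial\lambda_i)=(\deg g)\,g$ applied term by term gives
\[
\sum_i \lambda_i\, f_i(\lambda) \;=\; \frac{\sigma_k(\lambda)}{\sigma_{k-1}(\lambda)} - \sum_{l=0}^{k-1}(l-k+1)\,\beta_l\,\frac{\sigma_l(\lambda)}{\sigma_{k-1}(\lambda)}.
\]
Subtracting the definition of $f(\lambda)$ collapses the coefficient $-(l-k+1)+1 = k-l$, so
\[
\sum_i \lambda_i\, f_i(\lambda) - f(\lambda) \;=\; \sum_{l=0}^{k-1}(k-l)\,\beta_l\,\frac{\sigma_l(\lambda)}{\sigma_{k-1}(\lambda)}.
\]

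\smallskip

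\textbf{Step 3 (combine).} Substituting the identity of Step 2 into the tangent-line inequality of Step 1,
\[
\sum_i f_i(\lambda)\,\mu_i \;\ge\; f(\mu) + \sum_{l=0}^{k-1}(k-l)\,\beta_l\,\frac{\sigma_l(\lambda)}{\sigma_{k-1}(\lambda)},
\]
which is \eqref{uii}. There is no serious obstacle here; the only non-cosmetic point is justifying concavity of $f$ on the full cone $\Gamma_{k-1}$, and that is already encoded in Lemma \ref{lem2.5}. Note that the sign hypothesis $\beta_l\ge 0$ for $l\le k-2$ is used in Step 1 but plays no role in Step 2, which is purely algebraic; this is consistent with Assumption \ref{Ass} making no sign requirement on $\beta_{k-1}$.
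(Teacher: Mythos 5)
Your proof is correct and follows essentially the same route as the paper: the tangent-line inequality from concavity of $f$ on $\Gamma_{k-1}$ (Lemma \ref{lem2.5}) combined with the Euler-homogeneity identity $\sum_i\lambda_i f_i(\lambda)=f(\lambda)+\sum_{l=0}^{k-1}(k-l)\beta_l\frac{\sigma_l(\lambda)}{\sigma_{k-1}(\lambda)}$, which is exactly the fact the paper invokes, and your reading of the factor $(k-l)$ as belonging inside the sum matches the paper's own use of that identity. The only blemish is cosmetic: the convexity of $\sigma_l/\sigma_{k-1}$ is not a consequence of the pointwise Newton--MacLaurin inequality \eqref{NM} but of the concavity of $(\sigma_{k-1}/\sigma_l)^{1/(k-1-l)}$ from Proposition \ref{Con} (reciprocal and power arguments), though this side remark is unnecessary since Lemma \ref{lem2.5} already supplies the concavity you use.
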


\begin{proof}
Since $f$ is concave in $\Gamma_{k-1}$ (see Lemma \ref{lem2.5}), we have
\begin{eqnarray*}
f(\mu)\leq \sum_{i}f_i(\lambda)(\mu_i-\lambda_i)+f(\lambda),
\end{eqnarray*}
which implies
\begin{eqnarray*}
\sum_{i}f_i(\lambda)\mu_i\geq f(\mu)+(k-l)\sum_{l=0}^{k-1}
\beta_l\frac{\sigma_l(\lambda)}{\sigma_{k-1}(\lambda)},
\end{eqnarray*}
where we used the fact
\begin{eqnarray*}
\sum_{i}f_i(\lambda)\lambda_i
=f(\lambda)+\sum_{l=0}^{k-1}(k-l)\beta_l\frac{\sigma_l(\lambda)}{\sigma_{k-1}(\lambda)}.
\end{eqnarray*}
So, the proof is complete.
\end{proof}

\begin{lemma}\label{C2-212}
Let $\beta_0(z), ..., \beta_{k-2}(z)$ be nonnegative continuous functions on
$M$ and $\beta_{k-1}(z)$ be continuous functions on $M$.
Assume $\mu(z)=(\mu_1(z), ..., \mu_n(z)) \in \Gamma_{k-1}$ for any $z \in M$ and $\mu$ satisfy
\begin{eqnarray}\label{cone-2}
\frac{\sigma_{k-1}(\mu|i)}{\sigma_{k-2}(\mu|i)}
-\sum_{l=1}^{k-2}\beta_l(z)\frac{\sigma_{l-1}(\mu|i)}{\sigma_{k-2}(\mu|i)}>\beta_{k-1}(z).
\end{eqnarray}
for arbitrary $z \in M$  and $i \in \{1, 2, ..., n\}$.
Then there are constants $N, \theta>0$ depending on $|\mu|_{C^{0}(M)}$ and $|\beta_i|_{C^{0}(M)}$ with $0\leq i\leq k-1$
such that for any $z \in M$, if
\begin{eqnarray*}
\max_{1\leq i\leq n}\{\lambda_i(z)\}\geq N,
\end{eqnarray*}
we have at $z$
\begin{eqnarray}\label{C2-2-1}
\sum_{i}f_i(\lambda) (\lambda_i-\mu_i)\leq -\theta-\theta \sum_{i}f_i(\lambda)
\end{eqnarray}
or
\begin{eqnarray}\label{C2-2-2}
f_1(\lambda)\lambda_1\geq \theta.
\end{eqnarray}
\end{lemma}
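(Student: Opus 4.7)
The plan is to test concavity of $f$ at $\lambda$ against an auxiliary vector $\tilde\mu:=\mu-\theta_0\mathbf{1}+t_0 e_1$, where $\mathbf{1}=(1,\dots,1)$, $e_1$ is the first standard basis vector, $\theta_0>0$ is small and $t_0>0$ is large; the choice is engineered so that $f(\tilde\mu,z)\geq \beta_{k-1}(z)+2\theta_0$ holds uniformly in $z\in M$. After relabeling so that $\lambda_1\geq\lambda_2\geq\cdots\geq\lambda_n$, the hypothesis becomes $\lambda_1\geq N$, and aligning $e_1$ with the largest eigenvalue will ensure that the concavity inequality produces the single directional error $t_0 f_1(\lambda)$ on which the dichotomy acts.

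First I would combine compactness of $M$, continuity of $\mu$ and $\beta_l$, and the strict cone condition \eqref{cone-2} to upgrade it to a uniform margin $2\delta>0$ depending only on $|\mu|_{C^0}$ and $|\beta_l|_{C^0}$:
\[
\frac{\sigma_{k-1}(\mu|i)}{\sigma_{k-2}(\mu|i)}-\sum_{l=1}^{k-2}\beta_l\frac{\sigma_{l-1}(\mu|i)}{\sigma_{k-2}(\mu|i)}\geq \beta_{k-1}+2\delta
\]
for all $z\in M$ and all $i$. Expanding $\sigma_m(\mu+te_1)$ in powers of $t$ identifies this left-hand side with $\lim_{t\to\infty}f(\mu+te_1,z)$, so for $t_0$ sufficiently large (again depending only on the $C^0$-data) one gets $f(\mu+t_0 e_1,z)\geq \beta_{k-1}(z)+\tfrac{3\delta}{2}$. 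Since $f$ is smooth on $\Gamma_{k-1}$ and the set $\{\mu(z)+t_0 e_1 : z\in M\}$ lies in a compact subset of $\Gamma_{k-1}$, I would then fix $\theta_0>0$ small enough so that $\tilde\mu\in\Gamma_{k-1}$ and
\[
f(\tilde\mu,z)\geq \beta_{k-1}(z)+2\theta_0\qquad\text{for every }z\in M.
\]

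Next I would invoke Lemma \ref{lem2.5} (concavity of $f$ on $\Gamma_{k-1}$) together with the equation $f(\lambda,z)=\beta_{k-1}(z)$ to get
\[
2\theta_0\leq f(\tilde\mu)-f(\lambda)\leq \sum_i f_i(\lambda)(\tilde\mu_i-\lambda_i)=\sum_i f_i(\lambda)(\mu_i-\lambda_i)-\theta_0\sum_i f_i(\lambda)+t_0 f_1(\lambda),
\]
which rearranges to
\[
\sum_i f_i(\lambda)(\lambda_i-\mu_i)\leq -2\theta_0-\theta_0\sum_i f_i(\lambda)+t_0 f_1(\lambda).
\]
Setting $\theta:=\theta_0$ and $N:=t_0$ yields the dichotomy: either $f_1(\lambda)\lambda_1\geq\theta$, which is \eqref{C2-2-2}, or else $f_1(\lambda)<\theta/\lambda_1\leq \theta/N$, so that $t_0 f_1(\lambda)<\theta$ and the preceding bound collapses to \eqref{C2-2-1}.

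The hard part will be Step 1, namely converting the pointwise strict cone margin into a quantitative lower bound for $f$ at the shifted vector $\tilde\mu$ while simultaneously keeping $\tilde\mu$ inside the concavity region $\Gamma_{k-1}$. This requires a careful balance: the downward shift $\theta_0\mathbf{1}$ must be large enough to produce the crucial $-\theta_0\sum_i f_i(\lambda)$ term in the conclusion \eqref{C2-2-1}, yet small enough not to erode the $2\theta_0$-gap generated by pushing in the $e_1$-direction. Once $\theta_0$ and $t_0$ have been fixed uniformly in terms of the $C^0$-data, the remainder is just the two-case comparison $f_1(\lambda)\lambda_1\gtrless\theta$ applied to the concavity inequality.
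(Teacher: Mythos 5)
Your proof is correct and follows essentially the same route as the paper: a uniform margin in the cone condition obtained by compactness, the identification of the large-$t$ limit of $f$ along the direction of the largest eigenvalue with the cone-condition quantity, the concavity of $f$ on $\Gamma_{k-1}$ combined with $f(\lambda,z)=\beta_{k-1}(z)$, and the final dichotomy on $f_1(\lambda)\lambda_1$. The only (harmless) difference is that your comparison vector $\mu-\theta_0\mathbf{1}+t_0e_1$ has a fixed large first entry $t_0=N$, whereas the paper uses $(\lambda_1,\mu_2-\epsilon,\dots,\mu_n-\epsilon)$ with first entry $\lambda_1$ itself, which is a bookkeeping variant rather than a different argument.
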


\begin{proof}
For any $z \in M$, without loss of generality, we may assume $\lambda_1(z)=\max_{1\leq i\leq n}\{\lambda_i(z)\}$.
If $\lambda_1\gg -\mu_1+\epsilon$, we have
\begin{eqnarray}\label{C2-le1-1}
&&\sum_{i}f_i(\lambda) (\lambda_i-\mu_i)\nonumber\\&=&\sum_{i\geq 2}f_i(\lambda) (\lambda_i-(\mu_i-\epsilon))
-\epsilon\sum_{i}f_i(\lambda)+f_1(\lambda)(\lambda_1-\mu_1+\epsilon)\nonumber\\&\leq&\sum_{i\geq 2}f_i(\lambda) (\lambda_i-(\mu_i-\epsilon))
-\epsilon\sum_{i}f_i(\lambda)+2 f_1(\lambda)\lambda_1.
\end{eqnarray}
Choosing $\epsilon$ small enough  which is independent of $z$ such that $(\mu_1-\epsilon, ..., \mu_n-\epsilon) \in \Gamma_{k-1}$ satisfies \eqref{cone-2}.
So we can choose $\lambda_1$ large enough  which is independent of $z$ such that
$\widetilde{\mu}=(\lambda_1, \mu_2-\epsilon, ..., \mu_n-\epsilon) \in \Gamma_{k-1}$
and
\begin{eqnarray}\label{C2-le1-11}
f(\widetilde{\mu})\geq \beta_{k-1}+\widetilde{\epsilon}
\end{eqnarray}
in view that
\begin{eqnarray*}
\lim_{\lambda_1\rightarrow+\infty}f(\widetilde{\mu})=\frac{\sigma_{k-1}(\mu^{\prime})}{\sigma_{k-2}(\mu^{\prime})}
-\sum_{l=1}^{k-2}\beta_l\frac{\sigma_{l-1}(\mu^{\prime})}{\sigma_{k-2}(\mu^{\prime})}>\beta_{k-1}.
\end{eqnarray*}
where $\mu^{\prime}=(\mu_2-\epsilon, ..., \mu_n-\epsilon)$ and we use the inequality \eqref{cone-2}.
Now, we rewrite \eqref{C2-le1-1} as
\begin{eqnarray}\label{C2-le1-2}
&&\sum_{i}f_i(\lambda) (\lambda_i-\mu_i)\nonumber\\&\leq&\sum_{i}f_i(\lambda)(\lambda_i-\widetilde{\mu}_i)
-\epsilon\sum_{i}f_i(\lambda)+2 f_1(\lambda)\lambda_1.
\end{eqnarray}
Since $f$ is concave in $\Gamma_{k-1}$ (see Lemma \ref{lem2.5}), there is
\begin{eqnarray*}
f(\widetilde{\mu})\leq f_i(\lambda)(\widetilde{\mu}_i-\lambda_i)+f(\lambda),
\end{eqnarray*}
which implies together with \eqref{C2-le1-2}
\begin{eqnarray*}
\sum_{i}f_i(\lambda) (\lambda_i-\mu_i)\leq f(\lambda)-f(\widetilde{\mu})-\epsilon \sum_{i}f_i(\lambda)+2 f_1(\lambda)\lambda_1.
\end{eqnarray*}
Thus, when $\lambda_1$ is large enough, we have by \eqref{C2-le1-11}
\begin{eqnarray*}
\sum_{i}f_i(\lambda) (\lambda_i-\mu_i)\leq -\widetilde{\epsilon}-\epsilon \sum_{i}f_i(\lambda)+2 f_1(\lambda)\lambda_1.
\end{eqnarray*}
If $f_1(\lambda)\lambda_1\leq \frac{\widetilde{\epsilon}}{4}$, we obtain
\begin{eqnarray*}
\sum_{i}f_i(\lambda) (\lambda_i-\mu_i)\leq -\frac{\widetilde{\epsilon}}{2}-\epsilon \sum_{i}f_i(\lambda).
\end{eqnarray*}
Then, we complete the proof if we choose $\theta=\min\{\frac{\widetilde{\epsilon}}{4}, \epsilon\}$.
\end{proof}

\begin{lemma}\label{lem2.6}
Assume $\lambda \in \Gamma_{k-1}$ satisfies \eqref{f} and $\beta_l(z)\geq 0$
($0 \leq l \leq k-2$), then we have
\begin{eqnarray}\label{2.9}
0<\frac{\sigma_l(\lambda)}{\sigma_{k-1}(\lambda)} \leq C, \quad 0 \leq l \leq k-2,
\end{eqnarray}
where the constant $C$ depends on $n$, $k$, $\inf\beta_l$, $\sup |\beta_{k-1}|$. Moreover, we have
\begin{eqnarray}\label{2.10}
-\sup|\beta_{k-1}|\leq\frac{\sigma_k(\lambda)}{\sigma_{k-1}(\lambda)} \leq
C,
\end{eqnarray}
where the constant $C$ depends on $n$, $k$, $\inf\beta_l$ for $0\leq l\leq k-2$,
and $\sum_{i=0}^{k-1}|\beta_i|$.
\end{lemma}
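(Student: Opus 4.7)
The strict positivity $\sigma_l(\lambda)/\sigma_{k-1}(\lambda)>0$ is immediate from $\lambda\in\Gamma_{k-1}$, which forces $\sigma_0(\lambda),\ldots,\sigma_{k-1}(\lambda)>0$. Rewriting (\ref{f}) as $\sigma_k(\lambda)=\sum_{l=0}^{k-2}\beta_l(z)\sigma_l(\lambda)+\beta_{k-1}(z)\sigma_{k-1}(\lambda)$ and using $\beta_l\ge 0$ and $\sigma_l>0$ for $0\le l\le k-2$ yields
\begin{equation*}
\frac{\sigma_k(\lambda)}{\sigma_{k-1}(\lambda)}\;\ge\;\beta_{k-1}(z)\;\ge\;-\sup_{M}|\beta_{k-1}|,
\end{equation*}
which is the lower bound in (\ref{2.10}). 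The content is therefore the two upper bounds.

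My plan is to reduce everything to controlling the single quantity $x:=\sigma_{k-2}(\lambda)/\sigma_{k-1}(\lambda)>0$. Applying the Newton--MacLaurin inequality (\ref{NM}) with $(k,l)=(k-1,k-2)$ on the left and $(r,s)=(k-1,l)$ on the right (all hypotheses $n\geq k>l$, $r>s\ge 0$, $k\ge r$, $l\ge s$ reduce here to $0\le l\le k-2$, and $\lambda\in\Gamma_{k-1}$ is the required cone assumption), then raising to the power $k-1-l$ and inverting, one derives
\begin{equation*}
\frac{\sigma_l(\lambda)}{\sigma_{k-1}(\lambda)}\;\le\;C_{n,k,l}\,x^{\,k-1-l},\qquad 0\le l\le k-2.
\end{equation*}
Hence, once $x$ is shown to be bounded, (\ref{2.9}) follows at once, and the upper half of (\ref{2.10}) is obtained directly from the equation: $\sigma_k/\sigma_{k-1}=\beta_{k-1}+\sum_{l=0}^{k-2}\beta_l\,\sigma_l/\sigma_{k-1}\le\sup_M|\beta_{k-1}|+C\sum_{l=0}^{k-2}\sup_M|\beta_l|$.

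To bound $x$ I combine the equation with the Newton inequality (\ref{N}), $\sigma_{k-1}^2\ge\sigma_k\sigma_{k-2}$, equivalently $\sigma_k/\sigma_{k-1}\le 1/x$. Dropping from the sum all terms except $l=k-2$ (allowed by $\beta_l\ge 0$ and $\sigma_l/\sigma_{k-1}>0$) gives
\begin{equation*}
\beta_{k-2}\,x\;\le\;\sum_{l=0}^{k-2}\beta_l\,\frac{\sigma_l(\lambda)}{\sigma_{k-1}(\lambda)}\;=\;\frac{\sigma_k(\lambda)}{\sigma_{k-1}(\lambda)}-\beta_{k-1}\;\le\;\frac{1}{x}+\sup_{M}|\beta_{k-1}|,
\end{equation*}
so the quadratic inequality $(\inf_M\beta_{k-2})\,x^2-\sup_M|\beta_{k-1}|\,x-1\le 0$ forces $x\le C(n,k,\inf_M\beta_{k-2},\sup_M|\beta_{k-1}|)$, which closes everything.

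The subtle point, and the main obstacle, is that Newton--MacLaurin only propagates bounds \emph{downward} through the index $l$, giving upper bounds on $\sigma_l/\sigma_{k-1}$ in terms of $x$ but no matching lower bound. The argument above thus directly needs $\inf_M\beta_{k-2}>0$. In the general setting of Assumption \ref{Ass}(ii), where one only knows $\sum_{l=0}^{k-2}\beta_l>0$, the positive coefficient may sit at some $l_0<k-2$; in that case the same substitution only controls $\sigma_{l_0}/\sigma_{k-1}$, not $x$ itself, and one has to bootstrap by iterating Newton--MacLaurin together with (\ref{f}) to transfer the bound back up to $x$. This is precisely the refinement over \cite{Sun17} advertised in the introduction.
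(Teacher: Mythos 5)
Your argument is sound exactly up to the point where you need $\inf_M\beta_{k-2}>0$, and the gap you flag at the end is not repairable: reducing the whole lemma to a bound on $x=\sigma_{k-2}(\lambda)/\sigma_{k-1}(\lambda)$ aims at a false intermediate statement. In the setting where the lemma is used (Assumption \ref{Ass}(i)--(ii)), $\beta_{k-2}$ may vanish identically while the strictly positive coefficient sits at some $l_0<k-2$, and then $x$ is genuinely unbounded, so no bootstrap combining \eqref{NM} with \eqref{f} can recover it. Concretely, take $n=k=3$, $\beta_0\equiv1$, $\beta_1\equiv0$, $\beta_{k-1}=\beta_2\equiv0$, and $\lambda=(R,R^{-1/2},R^{-1/2})$ with $R$ large: then $\lambda\in\Gamma_n\subset\Gamma_{k-1}$, $\sigma_3(\lambda)=1=\beta_0\sigma_0(\lambda)$, so the equation holds, while $x=\sigma_1/\sigma_2=(R+2R^{-1/2})/(2R^{1/2}+R^{-1})\to\infty$. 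The same example shows that \eqref{2.9} can only carry a finite constant for those indices $l$ with $\inf\beta_l>0$ (which is how the dependence of $C$ on $\inf\beta_l$ must be read), so any proof that funnels every index through a bound on $x$ cannot deliver the lemma in the form it is used later.

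The paper's proof avoids $x$ altogether by a dichotomy in $\sigma_k/\sigma_{k-1}$, treating each index $l$ separately. If $\sigma_k/\sigma_{k-1}\le1$, then \eqref{f} together with $\beta_j\sigma_j/\sigma_{k-1}\ge0$ gives $\beta_l\,\sigma_l/\sigma_{k-1}\le\sigma_k/\sigma_{k-1}-\beta_{k-1}\le1+\sup|\beta_{k-1}|$, and one divides by $\inf\beta_l$; this is the only place the $\inf\beta_l$ dependence enters, and it is needed only for indices with $\beta_l>0$. If $\sigma_k/\sigma_{k-1}>1$, then $\sigma_k>0$, hence $\lambda\in\Gamma_k$, and \eqref{NM} applied to the pairs $(k,k-1)$ and $(k-1,l)$ yields $\sigma_l/\sigma_{k-1}\le C(n,k)\,(\sigma_{k-1}/\sigma_k)^{k-1-l}\le C(n,k)$; the upper bound in \eqref{2.10} then follows from the equation exactly as in your final display. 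Your two ingredients can be rescued in the same per-index spirit: for $l$ with $\inf\beta_l>0$, if $x\le1$ your Newton--MacLaurin step already gives $\sigma_l/\sigma_{k-1}\le C(n,k,l)$, while if $x\ge1$ then \eqref{N} gives $\sigma_k/\sigma_{k-1}\le1/x\le1$ and the equation gives $\sigma_l/\sigma_{k-1}\le(1+\sup|\beta_{k-1}|)/\inf\beta_l$ --- but the attempt to bound $x$ itself must be abandoned.
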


\begin{proof}
On the one hand, if $\frac{\sigma_k}{\sigma_{k-1}}\leq 1$, then we get from the equation \eqref{f}
\begin{equation*}
\beta_l \frac{\sigma_l}{\sigma_{k-1}}
\leq \frac{\sigma_k}{\sigma_{k-1}}-\beta_{k-1} \leq 1+\sup|\beta_{k-1}|, ~~0 \leq l \leq k-2.
\end{equation*}
On the other hand, if $\frac{\sigma_k}{\sigma_{k-1}} > 1$, i.e. $\frac{\sigma_{k-1}}{\sigma_{k}} < 1$.
We can get for $0 \leq l \leq k-2$ by the Newton-MacLaurin inequality \eqref{NM},
\begin{equation*}
\frac{\sigma_l}{\sigma_{k-1}}\leq \frac{(C_n^k)^{k-1-l}C_n^l}{(C_n^{k-1})^{k-l}}
(\frac{\sigma_{k-1}}{\sigma_k})^{k-1-l} \leq \frac{(C_n^k)^{k-1-l}C_n^l}{(C_n^{k-1})^{k-l}} \leq C(n,k).
\end{equation*}
So, we get \eqref{2.9}. Then, it follows that
\begin{equation*}
-\sup|\beta_{k-1}|\leq \frac{\sigma_k}{\sigma_{k-1}}=\sum\limits_{l=0}^{k-2} \beta_l \frac{\sigma_l}
{\sigma_{k-1}}+\beta_{k-1}\leq C \sum_{i=0}^{n}|\beta_i|.
\end{equation*}
Thus, the proof is complete.
\end{proof}

\begin{lemma}\label{lem2.7}
Assume $\lambda \in \Gamma_{k-1}$ and $\beta_l(z)\geq0$
($0 \leq l \leq k-2$), then
\begin{align}
\label{2.11} \sum_{i=1}^{n} f_i\geq \frac{n-k+1}{k} .
\end{align}
\end{lemma}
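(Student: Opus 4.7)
The plan is to split
\[
f = \frac{\sigma_k(\lambda)}{\sigma_{k-1}(\lambda)} \;-\; \sum_{l=0}^{k-2}\beta_l(z)\,\frac{\sigma_l(\lambda)}{\sigma_{k-1}(\lambda)} \;-\; \beta_{k-1}(z)
\]
into three parts and differentiate each part separately. The constant $-\beta_{k-1}(z)$ contributes nothing to $\sum_i f_i$, so it remains to show (a) that $\sum_i \partial_{\lambda_i}(\sigma_k/\sigma_{k-1}) \geq (n-k+1)/k$ on $\Gamma_{k-1}$, and (b) that for each $0\leq l\leq k-2$, $\partial_{\lambda_i}(\sigma_l/\sigma_{k-1}) \leq 0$ on $\Gamma_{k-1}$. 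Together with $\beta_l(z)\geq 0$, these two facts give the desired lower bound.

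For (a), I will use \eqref{2.1-1} to compute
\[
\sum_{i=1}^n\partial_{\lambda_i}\!\left(\frac{\sigma_k}{\sigma_{k-1}}\right) \;=\; \frac{(n-k+1)\sigma_{k-1}^2 - (n-k+2)\sigma_k\sigma_{k-2}}{\sigma_{k-1}^2},
\]
where I invoke the identities $\sum_i \sigma_{k-1}(\lambda|i)=(n-k+1)\sigma_{k-1}$ and $\sum_i \sigma_{k-2}(\lambda|i)=(n-k+2)\sigma_{k-2}$ from Proposition \ref{prop2.1}. Applying the Newton inequality \eqref{N-1} with the shift $k\mapsto k-1$ yields $(n-k+2)\,\sigma_k\sigma_{k-2} \leq \frac{(k-1)(n-k+1)}{k}\sigma_{k-1}^2$, and plugging this in produces the bound $(n-k+1)/k$. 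Note that this step only requires $\lambda\in\Gamma_{k-1}$, so $\sigma_{k-1}>0$ is ensured and the fraction is well defined.

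For (b), I will apply Proposition \ref{Ell} to the quotient $\sigma_{k-1}/\sigma_l$: since $0\leq l < k-1$, the proposition yields $\partial_{\lambda_i}(\sigma_{k-1}/\sigma_l) > 0$ on $\Gamma_{k-1}$. Because $\sigma_l > 0$ on $\Gamma_{k-1}$ for all $l\leq k-1$, taking reciprocals gives $\partial_{\lambda_i}(\sigma_l/\sigma_{k-1})<0$. The nonnegativity hypothesis on $\beta_0,\ldots,\beta_{k-2}$ then makes the contribution $-\sum_{l=0}^{k-2}\beta_l \sum_i \partial_{\lambda_i}(\sigma_l/\sigma_{k-1})$ nonnegative, so it can only improve the bound coming from (a). The only delicate point is bookkeeping with the index shift in Newton's inequality; everything else reduces to the basic identities of Proposition \ref{prop2.1} together with Proposition \ref{Ell}, so no further obstacle is anticipated.
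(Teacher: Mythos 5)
Your proposal is correct and follows essentially the same route as the paper: drop the $\beta_{k-1}$ term, use the monotonicity from Proposition \ref{Ell} (with $k$ replaced by $k-1$) and $\beta_l\geq 0$ to reduce to $\sum_i\partial_{\lambda_i}(\sigma_k/\sigma_{k-1})$, compute that sum via Proposition \ref{prop2.1}, and conclude with the Newton inequality \eqref{N-1} shifted to $k-1$. Your write-up merely makes explicit the reciprocal argument behind step (b), which the paper compresses into a citation of Proposition \ref{Ell}.
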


\begin{proof}
By direct computations, we can get by Propositions \ref{Ell}, \ref{prop2.1} and \ref{prop2.4}
\begin{align*}
\sum_{i=1}^{n} f_i \ge \sum_{i=1}^{n}  {\frac{{\partial \left({\frac{{\sigma _k }}{{\sigma_{k-1}}}} \right)}}{{\partial \lambda_i}}}=&\sum_{i=1}^{n} {\frac{{\sigma _{k-1} (\lambda |i)\sigma_{k-1}-\sigma_k \sigma_{k-2}(\lambda |i)}}{{\sigma_{k-1}^2}}}  \notag \\
=&\frac{{(n-k + 1)\sigma_{k-1} ^2-(n-k+2)\sigma _k \sigma _{k-2}}}{{\sigma _{k-1}^2 }} \notag \\
\ge& \frac{{n-k+1}}{k},
\end{align*}
where we get the last inequality from Newton' inequality \eqref{N-1}. Hence \eqref{2.11} holds.
\end{proof}

\section{$C^0$ estimate}

In this section, we follow the idea of Sun \cite{Sun17}
to derive $C^0$ estimate directly from the cone condition. Moreover, we expect that our estimate
holds true for $\alpha_{k-1}$ with indefinite sign and does not depend on the lower bound of $\alpha_i$ ($0\leq i\leq k-2)$.

\subsection{Some lemmas}

Since $\chi_0 \in \Gamma_{k-1}(M)$, we may assume that there is a uniform constant $\tau>0$ such that
\begin{eqnarray}\label{gama}
\chi_0-\tau\omega \in \Gamma_{k-1}(M) \quad \mbox{and} \quad \omega-\tau\chi_0 \in \Gamma_{k-1}(M).
\end{eqnarray}

Then, we can succeed in extending Lemma 2.3 in \cite{Sun17} to our case.
\begin{lemma}
Let $(M, \omega)$ be a K\"ahler manifold of complex dimension $n\geq2$ and
$\alpha_0(z), ..., \alpha_{k-1}(z)$ be continuous functions on $M$ which satisfy
\begin{eqnarray*}
\alpha_i(z)\geq 0, \ 0\leq i\leq k-2 \quad \mbox{and} \quad \sum_{l=0}^{k-2}\alpha_i(z)>0 \quad \mbox{for all} \quad z \in M.
\end{eqnarray*}
Suppose that
$\chi_0 \in \Gamma_{k-1}(M)$ satisfies
\eqref{gama}. If $u \in C^{2}(M)$ satisfies $\chi_u \in \Gamma_{k-1}(M)$, then we have the following pointwise inequalities:

(1) for $0\leq t\leq 1$ and $1\leq l\leq k-1$
\begin{eqnarray}\label{C0-le-1}
\chi_{tu}^{l-1}\wedge \omega^{n-l}\geq (1-t)^{l-1}
\tau^{l-1}\omega^{n-1}.
\end{eqnarray}

(2) for $0<t\leq 1$ and $1\leq l\leq k-1$,
\begin{eqnarray}\label{C0-le-2}
\chi_{u}^{l}\wedge \omega^{n-l}\leq \frac{1}{t^l}
\chi^{l}_{tu}\wedge \omega^{n-l}.
\end{eqnarray}
Moreover, if $u$ is a solution to the equation \eqref{K-eq} and $\chi_0$ satisfies the cone condition \eqref{cone},
then there exists a uniform constant $C>0$ such that for $0\leq t\leq 1$
\begin{eqnarray}\label{C0-le-3}
&&k\chi_{tu}^{k-1}\wedge \omega^{n-k}-\sum_{l=1}^{k-1}l\alpha_l\chi_{tu}^{l-1}\wedge \omega^{n-l}\\ \nonumber&>&C(1-t)
\chi_{tu}^{k-2}\wedge \omega^{n-k+1},
\end{eqnarray}
and consequently
\begin{eqnarray}\label{C0-le-4}
&&k\chi_{tu}^{k-1}\wedge \omega^{n-k}-\sum_{l=1}^{k-1}l\alpha_l\chi_{tu}^{l-1}\wedge \omega^{n-l}\\ \nonumber&>&C\tau^{k-2}(1-t)^{k-1}
\omega^{n-1}.
\end{eqnarray}
\end{lemma}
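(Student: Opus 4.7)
The two algebraic bounds (1) and (2) follow from binomial expansion together with G{\aa}rding positivity on $\Gamma_{k-1}(M)$. For (1), I write $\chi_{tu}=(1-t)\tau\omega+\eta_t$ with $\eta_t:=t\chi_u+(1-t)(\chi_0-\tau\omega)$, which lies in the convex cone $\Gamma_{k-1}(M)$ because both $\chi_u$ and $\chi_0-\tau\omega$ do. Binomial expansion of $\chi_{tu}^{l-1}\wedge\omega^{n-l}$ isolates the pure $((1-t)\tau\omega)^{l-1}$ term contributing $(1-t)^{l-1}\tau^{l-1}\omega^{n-1}$, while every remaining mixed term $\binom{l-1}{j}((1-t)\tau)^j\eta_t^{l-1-j}\wedge\omega^{n-l+j}$ is a strongly positive $(n-1,n-1)$-form: since $\eta_t\in\Gamma_{k-1}(M)\subseteq\Gamma_{m+1}(M)$ for $m:=l-1-j\leq k-2$, the interlacing property of G{\aa}rding cones gives $\eta_t|i\in\Gamma_m$, hence all principal coefficients $\sigma_m(\eta_t|i)>0$. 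Discarding these nonnegative terms yields (1). Inequality (2) is parallel, using $\chi_{tu}=t\chi_u+(1-t)\chi_0$ with both summands in $\Gamma_{k-1}(M)$: binomial expansion of $\chi_{tu}^l\wedge\omega^{n-l}$ isolates $t^l\chi_u^l\wedge\omega^{n-l}$, and the cross terms $\binom{l}{j}t^{l-j}(1-t)^j\chi_u^{l-j}\wedge\chi_0^j\wedge\omega^{n-l}$ are nonnegative top forms by G{\aa}rding's mixed inequality (at most $l\leq k-1$ factors, each in $\Gamma_{k-1}$).

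For (3), my plan is to establish the pointwise cone inequality at the endpoints $t=0$ and $t=1$ and then interpolate. Since \eqref{K-eq} depends only on $\chi_u$ and the cone condition is a cohomological property of $[\chi_0]$, I first replace $(\chi_0,u)$ by $(\overline{\chi},u-v)$ where $\overline{\chi}=\chi_0+\tfrac{\sqrt{-1}}{2}\partial\overline{\partial}v\in[\chi_0]\cap\Gamma_{k-1}(M)$ is the cone representative; the substitution preserves the equation and, by compactness and continuity, produces a uniform margin $\delta_0>0$ with
\[
k\chi_0^{k-1}\wedge\omega^{n-k}-\sum_{l=1}^{k-1}l\alpha_l\chi_0^{l-1}\wedge\omega^{n-l}\geq 2\delta_0\,\omega^{n-1}
\]
pointwise on $M$. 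At $t=1$ the solution property enters via Lemma \ref{k-i}: its second part applied to $\chi_u\in\Gamma_{k-1}(M)$ gives $\sigma_{k-1}(\chi_u|i)\sigma_{k-1}(\chi_u)\geq\sigma_{k-2}(\chi_u|i)\sigma_k(\chi_u)$, and substituting $\sigma_k(\chi_u)=\sum_{l=0}^{k-1}\beta_l\sigma_l(\chi_u)$ from the equation, the $l=k-1$ contribution cancels exactly, while for $1\leq l\leq k-2$ Lemma \ref{k-i} (first part, applied with upper index $k-1$) furnishes the strict inequality $\sigma_l(\chi_u)\sigma_{k-2}(\chi_u|i)>\sigma_{k-1}(\chi_u)\sigma_{l-1}(\chi_u|i)$. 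Combined with Assumption \ref{Ass}(ii) this yields $\sigma_{k-1}(\chi_u|i)>\sum_{l=1}^{k-1}\beta_l\sigma_{l-1}(\chi_u|i)$ pointwise at $\chi_u$.

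With endpoint positivity in hand I interpolate by observing that
\[
\Phi(t)-C(1-t)\chi_{tu}^{k-2}\wedge\omega^{n-k+1}
\]
has the same formal shape as the cone quantity at $\chi_{tu}$ but with the coefficient $\alpha_{k-1}$ replaced by $\alpha_{k-1}+C(1-t)/(k-1)$. Expanding $\chi_{tu}=(1-t)\chi_0+t\chi_u$ binomially and regrouping by the respective degrees of $\chi_0$ and $\chi_u$ expresses the above as a sum of mixed wedge products of $\chi_0,\chi_u\in\Gamma_{k-1}(M)$ and $\omega$, all nonnegative by G{\aa}rding, whose extreme coefficients are controlled by the cone inequality at $\chi_0$ (the margin $\delta_0$ absorbs the $C(1-t)$ perturbation for $C$ small) and by the cone inequality at $\chi_u$ (handling the $t^{k-1}$ piece). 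Inequality (4) then follows from (3) combined with the $l=k-1$ case of (1), giving $\chi_{tu}^{k-2}\wedge\omega^{n-k+1}\geq(1-t)^{k-2}\tau^{k-2}\omega^{n-1}$. The main obstacle is (3): merging the two qualitatively different sources of endpoint positivity (cohomological cone condition at $\chi_0$ versus the equation plus Lemma \ref{k-i} at $\chi_u$) through a binomial interpolation demands careful combinatorial bookkeeping to keep every cross term nonnegative, and the WLOG substitution $\chi_0\to\overline{\chi}$ shifts $\chi_{tu}$ and must be tracked through the downstream $C^0$ argument, though it is harmless because $v$ contributes only a fixed additive constant to the oscillation of $u$.
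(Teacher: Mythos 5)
Your handling of \eqref{C0-le-1} and \eqref{C0-le-2} by binomial expansion plus G{\aa}rding positivity of the mixed wedge products is correct (the paper instead uses concavity of $\sigma_l^{1/l}$ and $\sigma_{l-1}^{1/(l-1)}(\cdot|i)$ along the segment, but your route gives the same conclusions), and your $t=1$ endpoint inequality at $\chi_u$ --- Lemma \ref{k-i} applied with upper index $k-1$, the equation \eqref{K-eq1}, and nonnegativity of $\beta_0,\dots,\beta_{k-2}$ together with Assumption \ref{Ass}(ii) --- is essentially the paper's step. Likewise the deduction of \eqref{C0-le-4} from \eqref{C0-le-3} via the $l=k-1$ case of \eqref{C0-le-1} is exactly what the paper does, and replacing $\chi_0$ by the cone representative is a harmless normalization.

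The genuine gap is the interpolation step for \eqref{C0-le-3}. In local form you must show $\sigma_{k-1}(\chi_{tu}|i)-\sum_{l=1}^{k-1}\beta_l\sigma_{l-1}(\chi_{tu}|i)>\delta(1-t)\sigma_{k-2}(\chi_{tu}|i)$ for \emph{all} $t\in[0,1]$, and this left-hand side is a degree $k-1$ polynomial in $t$; positivity at the two endpoints does not propagate to the interior without additional structure. Your proposed binomial expansion of $\chi_{tu}=(1-t)\chi_0+t\chi_u$ regrouped by degrees of $\chi_0$ and $\chi_u$ cannot close as described: G{\aa}rding positivity makes each mixed product $\chi_0^a\wedge\chi_u^b\wedge\omega^{n-1-a-b}$ positive, but that works \emph{against} you for the subtracted terms $-\,l\alpha_l\chi_{tu}^{l-1}\wedge\omega^{n-l}$ (and $\alpha_{k-1}$ may have either sign), while the endpoint cone inequalities at $\chi_0$ and $\chi_u$ only control the extreme groupings $(1-t)^{k-1}$ and $t^{k-1}$; the intermediate cross terms with $a,b\geq 1$ are not controlled by either endpoint, because the cone condition is not a multilinear condition that polarizes. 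The paper's mechanism, which you are missing, is the concavity on $\Gamma_{k-2}$ (Proposition \ref{Con}, Lemma \ref{lem2.5}, applied to the minors) of $f(\chi|i)=\frac{\sigma_{k-1}(\chi|i)}{\sigma_{k-2}(\chi|i)}-\sum_{l=1}^{k-2}\beta_l\frac{\sigma_{l-1}(\chi|i)}{\sigma_{k-2}(\chi|i)}$: along the segment one gets $f(\chi_{tu}|i)\geq(1-t)f(\chi_0|i)+tf(\chi_u|i)>\beta_{k-1}+(1-t)\delta$, and multiplying back by $\sigma_{k-2}(\chi_{tu}|i)>0$ is precisely \eqref{C0-le-3}. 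Dividing by $\sigma_{k-2}(\chi_{tu}|i)$ before interpolating is what makes the argument linear in $t$; without it your ``careful combinatorial bookkeeping'' has no reason to succeed, and indeed you flag this yourself as unresolved, so step (3) remains unproved in your proposal.
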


\begin{proof}
For $1\leq l\leq k-1$ and $1\leq i\leq n$,
$\sigma_{l}^{\frac{1}{l}}(\lambda)$ and $\sigma_{l-1}^{\frac{1}{l-1}}(\lambda|i)$
are concave in $\Gamma_l$ and $\Gamma_{l-1}$ respectively.  Thus,
\begin{eqnarray}\label{C000-1}
\sigma_{l}^{\frac{1}{l}}(\chi_{tu})
\geq (1-t)\sigma_{l}^{\frac{1}{l}}(\chi_0)+t\sigma_{l}^{\frac{1}{l}}(\chi_u)\geq t\sigma_{l}^{\frac{1}{l}}(\chi_u),
\end{eqnarray}
\begin{eqnarray}\label{C000-2}
\sigma_{l-1}^{\frac{1}{l-1}}(\chi_{tu}|i)
&\geq& (1-t)\sigma_{l-1}^{\frac{1}{l-1}}(\chi_0|i)+t\sigma_{l-1}^{\frac{1}{l-1}}(\chi_u|i)
\nonumber\\&\geq& (1-t)\sigma_{l-1}^{\frac{1}{l-1}}(\chi_0|i),
\end{eqnarray}
and
\begin{eqnarray}\label{C000-3}
\sigma_{l-1}^{\frac{1}{l-1}}(\chi_0|i)
\geq \sigma_{l-1}^{\frac{1}{l-1}}(\chi_0-\tau \omega|i)+\tau\sigma_{l-1}^{\frac{1}{l-1}}(\omega|i)
\geq \tau\sigma_{l-1}^{\frac{1}{l-1}}(\omega|i).
\end{eqnarray}
Combining \eqref{C000-2} and \eqref{C000-3} gives
\begin{eqnarray*}
\sigma_{l-1}(\chi_{tu}|i)\geq
(1-t)^{l-1}\tau^{l-1}\sigma_{l-1}(\chi_0|i),
\end{eqnarray*}
which is just the local form of \eqref{C0-le-1}. Similarly,
\eqref{C0-le-2} is a consequence of \eqref{C000-1}.
Now we only need to prove \eqref{C0-le-3} and \eqref{C0-le-4}.
Since
\begin{eqnarray*}
f(\chi|i):=\frac{\sigma_{k-1}(\chi|i)}{\sigma_{k-2}(\chi|i)}-\sum_{l=1}^{k-2}
\beta_l\frac{\sigma_{l-1}(\chi|i)}{\sigma_{k-2}(\chi|i)}
\end{eqnarray*}
is concave in $\Gamma_{k-2}$ (see Lemma \ref{lem2.5}), we can obtain
\begin{eqnarray}\label{0-1}
f(\chi_{tu}|i)\geq (1-t)f(\chi_{0}|i)
+t f(\chi_{u}|i).
\end{eqnarray}
Moreover, we have by Lemma \ref{k-i}
\begin{eqnarray}\label{0-2}
f(\chi_{u}|i)&=&\frac{\sigma_{k-1}(\chi_{u}|i)}{\sigma_{k-2}(\chi_{u}|i)}
-\sum_{l=1}^{k-2}\beta_l\frac{\sigma_{l-1}(\chi_{u}|i)}{\sigma_{k-2}(\chi_{u}|i)}\nonumber\\&>&\frac{\sigma_{k}(\chi_{u})}{\sigma_{k-1}(\chi_{u})}
-\sum_{l=1}^{k-2}\beta_l\frac{\sigma_{l}(\chi_{u})}{\sigma_{k-1}(\chi_{u})}
\nonumber\\&=&\frac{\beta_0}{\sigma_{k-1}(\chi_{u})}+\beta_{k-1}\nonumber\\&\geq&\beta_{k-1}.
\end{eqnarray}
In addition, since $\chi_0$ satisfies the cone condition \eqref{cone},
there exists some uniform constant $\delta>0$ which is independent of $z$ such that
\begin{eqnarray}\label{0-3}
f(\chi_{0}|i)>\beta_{k-1}(z)+\delta,
\end{eqnarray}
where we write the cone condition in a local version as  Lemma \ref{cone-local}.
Substituting \eqref{0-2} and \eqref{0-3} into \eqref{0-1}, we get
\begin{eqnarray*}
\frac{\sigma_{k-1}(\chi_{tu}|i)}{\sigma_{k-2}(\chi_{tu}|i)}
-\sum_{l=1}^{k-2}\beta_l\frac{\sigma_{l-1}(\chi_{tu}|i)}{\sigma_{k-2}(\chi_{tu}|i)}
>(1-t)(\beta_{k-1}+\delta)+t\beta_{k-1},
\end{eqnarray*}
this is to say
\begin{eqnarray*}
\sigma_{k-1}(\chi_{tu}|i)
-\sum_{l=1}^{k-1}\beta_l\sigma_{l-1}(\chi_{tu}|i)
>\delta(1-t)\sigma_{k-2}(\chi_{tu}|i),
\end{eqnarray*}
which is just the local form of \eqref{C0-le-3}.
Then, \eqref{C0-le-3} follows by substituting \eqref{C0-le-1} into it.
\end{proof}

Next, we derive the following important inequality by iterating the inequality (3.15) in \cite{Sun17}.

\begin{lemma}
Let $(M, \omega)$ be a K\"ahler manifold of complex dimension $n\geq2$. Suppose that
$\chi_0 \in \Gamma_{k-1}(M)$ satisfies
\eqref{gama}. If $u \in C^{2}(M)$ satisfies $\chi_u \in \Gamma_{k-1}(M)$, then we have the following inequalities for $l<k$:
\begin{eqnarray}\label{C0-le-5}
&&\frac{k-1}{l-1}\int_{0}^{\frac{1}{2}}dt\int_{M}e^{-pu}\sqrt{-1}\partial u\wedge \overline{\partial} u\wedge\chi_{tu}^{k-2}\wedge \omega^{n-k+1}\nonumber\\&\geq&\tau^{k-l}
\int_{0}^{\frac{1}{2}}dt\int_{M}e^{-pu}\sqrt{-1}\partial u\wedge \overline{\partial} u\wedge\chi_{tu}^{l-2}\wedge \omega^{n-l+1}.
\end{eqnarray}
\end{lemma}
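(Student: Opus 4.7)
The plan is to iterate a one-step comparison. For each $t\in[0,\frac12]$ and $2\le m\le k$, set
\[
I_m(t):=\int_M e^{-pu}\sqrt{-1}\partial u\wedge\overline{\partial}u\wedge\chi_{tu}^{m-2}\wedge\omega^{n-m+1},
\]
so that the target inequality \eqref{C0-le-5} reads
\[
\frac{k-1}{l-1}\int_0^{1/2}I_k(t)\,dt\ \ge\ \tau^{k-l}\int_0^{1/2}I_l(t)\,dt.
\]

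The key step is to establish the single-step estimate
\[
(m-1)\,I_m(t)\ \ge\ (m-2)\,\tau\,I_{m-1}(t),\qquad 3\le m\le k,\ t\in[0,\tfrac12],
\]
which is the adaptation to the present setting of inequality (3.15) in \cite{Sun17}. I would prove it by expanding $\chi_{tu}=\chi_0+\frac{t\sqrt{-1}}{2}\partial\overline{\partial}u$, splitting
\[
\chi_{tu}^{m-2}\wedge\omega^{n-m+1}=\chi_0\wedge\chi_{tu}^{m-3}\wedge\omega^{n-m+1}+\tfrac{t\sqrt{-1}}{2}\partial\overline{\partial}u\wedge\chi_{tu}^{m-3}\wedge\omega^{n-m+1},
\]
bounding the first piece from below using the cone condition $\chi_0-\tau\omega\in\Gamma_{k-1}(M)$ from \eqref{gama}, and integrating the second piece by parts (exploiting the $d$-closedness of $\chi_{tu}$ and $\omega$) so as to transfer $\partial\overline{\partial}u$ onto the remaining factors and thereby produce further copies of $\chi_{tu}-\chi_0$, which are in turn controlled by the cone condition. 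The combinatorial constants $(m-1)$ and $(m-2)$ arise from counting the $\chi_{tu}$-factors acted upon by the resulting differential.

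Granted the one-step, iterating from $m=k$ down to $m=l+1$ gives a telescoping product
\[
I_k(t)\ \ge\ \Big(\prod_{j=l+1}^{k}\tfrac{j-2}{j-1}\Big)\tau^{k-l}\,I_l(t)\ =\ \tfrac{l-1}{k-1}\,\tau^{k-l}\,I_l(t),
\]
so $(k-1)\,I_k(t)\ge(l-1)\,\tau^{k-l}\,I_l(t)$. Integrating over $t\in[0,\tfrac12]$ and dividing by $l-1$ produces \eqref{C0-le-5}. The main obstacle is tracking the exact coefficients $(m-1)$, $(m-2)$, and $\tau$ in the one-step inequality: once those numerical weights are pinned down correctly, the telescoping iteration is a mechanical bookkeeping step.
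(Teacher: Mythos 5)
Your overall architecture (a one-step comparison between consecutive powers of $\chi_{tu}$, then a telescoping iteration whose product of ratios $\prod_{j=l+1}^{k}\frac{j-2}{j-1}=\frac{l-1}{k-1}$ gives the constant) is the same as the paper's, and the iteration step is fine. The genuine gap is in your one-step estimate: you assert it \emph{pointwise in} $t$, namely $(m-1)I_m(t)\geq(m-2)\tau I_{m-1}(t)$ for every fixed $t\in[0,\frac12]$, and you propose to prove it by a spatial integration by parts at fixed $t$. That is not how the inequality arises, and your sketch does not deliver it. After splitting $\chi_{tu}^{m-2}=\chi_{tu}\wedge\chi_{tu}^{m-3}$ and using \eqref{gama} together with G{\aa}rding's inequality \eqref{G-i} on the first piece, the second piece is
\begin{equation*}
t\int_M e^{-pu}\sqrt{-1}\partial u\wedge\overline{\partial}u\wedge\tfrac{\sqrt{-1}}{2}\partial\overline{\partial}u\wedge\chi_{tu}^{m-3}\wedge\omega^{n-m+1}
=\frac{t}{m-2}\,\frac{d}{dt}I_m(t),
\end{equation*}
which has no sign at fixed $t$. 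The paper controls it not by moving derivatives in space but by integrating by parts \emph{in the auxiliary variable} $t$ over $[0,\frac12]$: $\int_0^{1/2}t\,I_m'(t)\,dt=\tfrac12 I_m(\tfrac12)-\int_0^{1/2}I_m(t)\,dt\geq-\int_0^{1/2}I_m(t)\,dt$, where the boundary term is dropped because the integrand is nonnegative (again G{\aa}rding, since $\chi_{tu}\in\Gamma_{k-1}(M)$). This is exactly where the factors $\frac{m-1}{m-2}$ come from, and it only yields the inequality between the $t$-integrated quantities $\int_0^{1/2}I_m\,dt$ — which is all the lemma claims and all that (3.15) of \cite{Sun17} provides.

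By contrast, a fixed-$t$ spatial integration by parts moves derivatives onto $e^{-pu}\sqrt{-1}\partial u\wedge\overline{\partial}u$ (the other factors are closed), producing $p$-dependent terms and further second-derivative terms rather than ``copies of $\chi_{tu}-\chi_0$'' with the clean constants $(m-1)$, $(m-2)$; nothing in your sketch pins those weights down. The best pointwise-in-$t$ bound that comes cheaply from convexity of $\Gamma_{k-1}$ is $I_m(t)\geq(1-t)\tau I_{m-1}(t)$ (writing $\chi_{tu}=\big[(1-t)(\chi_0-\tau\omega)+t\chi_u\big]+(1-t)\tau\omega$), which for $m\geq4$ and $t$ near $\tfrac12$ is strictly weaker than your claimed $\frac{m-2}{m-1}\tau$, so your pointwise one-step is at best unproven. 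To repair the argument, state the one-step inequality for $\int_0^{1/2}I_m(t)\,dt$, prove it via the decomposition plus the $t$-integration by parts just described, and then your telescoping goes through verbatim. (Alternatively, iterating the weaker pointwise bound gives $\int_0^{1/2}I_k\geq(\tau/2)^{k-l}\int_0^{1/2}I_l$, which suffices for the $C^0$ estimate but is not the stated inequality \eqref{C0-le-5}.)
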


\begin{proof}
Using integration by parts and Garding's inequality \eqref{G-i}, it yields
\begin{eqnarray*}
&&\int_{0}^{\frac{1}{2}}dt\int_{M}e^{-pu}\sqrt{-1}\partial u\wedge \overline{\partial} u\wedge\chi_{tu}^{l-1}\wedge \omega^{n-l}\\&\geq&
\tau\int_{0}^{\frac{1}{2}}dt\int_{M}e^{-pu}\sqrt{-1}\partial u\wedge \overline{\partial} u\wedge\chi_{tu}^{l-2}\wedge \omega^{n-l+1}
\\&&+\frac{1}{l-1}\int_{0}^{\frac{1}{2}}dt\int_{M}e^{-pu}\sqrt{-1}\partial u\wedge \overline{\partial} u\wedge t\frac{d}{dt}(\chi_{tu}^{l-1}\wedge \omega^{n-l})\\&\geq&
\tau\int_{0}^{\frac{1}{2}}dt\int_{M}e^{-pu}\sqrt{-1}\partial u\wedge \overline{\partial} u\wedge\chi_{tu}^{l-2}\wedge \omega^{n-l+1}
\\&&-\frac{1}{l-1}\int_{0}^{\frac{1}{2}}dt\int_{M}e^{-pu}\sqrt{-1}\partial u\wedge \overline{\partial} u\wedge \chi_{tu}^{l-1}\wedge \omega^{n-l}.
\end{eqnarray*}
Thus,
\begin{eqnarray*}
&&\frac{l}{l-1}\int_{0}^{\frac{1}{2}}dt\int_{M}e^{-pu}\sqrt{-1}\partial u\wedge \overline{\partial} u\wedge\chi_{tu}^{l-1}\wedge \omega^{n-l}\\&\geq&
\tau\int_{0}^{\frac{1}{2}}dt\int_{M}e^{-pu}\sqrt{-1}\partial u\wedge \overline{\partial} u\wedge\chi_{tu}^{l-2}\wedge \omega^{n-l+1}
.
\end{eqnarray*}
So, we obtain by iteration
\begin{eqnarray*}
&&\frac{k-1}{l-1}\int_{0}^{\frac{1}{2}}dt\int_{M}e^{-pu}\sqrt{-1}\partial u\wedge \overline{\partial} u\wedge\chi_{tu}^{k-2}\wedge \omega^{n-k+1}\\&\geq&\tau^{k-l}
\int_{0}^{\frac{1}{2}}dt\int_{M}e^{-pu}\sqrt{-1}\partial u\wedge \overline{\partial} u\wedge\chi_{tu}^{l-2}\wedge \omega^{n-l+1}
,
\end{eqnarray*}
which complete the proof.
\end{proof}

Once we have established the following inequality \eqref{C0}, we can use it to derive $C^0$
estimate \eqref{C00-1} by the standard argument in \cite{Tos19, Tos10} without using the equation \eqref{K-eq}.

\begin{lemma}\label{C0-00}
Let $(M, \omega)$ be a closed K\"ahler manifold of complex dimension $n\geq2$ and
$\alpha_0(z), ..., \alpha_{k-1}(z)$ be $C^2$ functions on $M$ which satisfy
\begin{eqnarray*}
\alpha_i(z)\geq 0, \ 0\leq i\leq k-2 \quad \mbox{and} \quad \sum_{l=0}^{k-2}\alpha_i(z)>0 \quad \mbox{for all} \quad z \in M.
\end{eqnarray*}
Suppose that $\chi_0 \in \Gamma_{k-1}(M)$ is closed and satisfies
\eqref{gama}. If $u \in C^{2}(M)$ satisfies $\chi_u \in \Gamma_{k-1}(M)$, then there exists uniform constants $C$
and $p_0$ such that for $p\geq p_0$ the following inequality holds
\begin{eqnarray}\label{C0}
\int_{M}|\partial e^{-\frac{p}{2}u}|^{2}_{g}\omega^n\leq C p\int_{M}e^{-pu}\omega^n.
\end{eqnarray}
Thus, there exists a uniform constant $C$ depends on the given
data $M$, $\omega$, $\chi_0$, and $|\alpha_i|_{C^0(M)}$ ($0\leq i\leq k-1$) such that
\begin{eqnarray}\label{C00-1}
|u|_{C^0(M)}\leq C \quad \mbox{with} \quad \sup_{M}u=0.
\end{eqnarray}
\end{lemma}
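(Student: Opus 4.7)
The plan is to establish the integral inequality \eqref{C0} via a one-parameter integration argument based on the cone condition, and then apply the standard Moser iteration. After normalizing $\sup_M u=0$, introduce the auxiliary function
$$F(t):=\int_M e^{-pu}\Big[\chi_{tu}^k\wedge\omega^{n-k}-\sum_{l=0}^{k-1}\alpha_l\chi_{tu}^l\wedge\omega^{n-l}\Big],\qquad t\in[0,1].$$
By the equation \eqref{K-eq}, $F(1)=0$, while $|F(0)|\leq C_0\int_M e^{-pu}\omega^n$ since $\chi_0$, $\omega$ and $\alpha_l$ are uniformly bounded. Differentiate in $t$ using $\tfrac{d}{dt}\chi_{tu}^m=m\chi_{tu}^{m-1}\wedge\tfrac{\sqrt{-1}}{2}\partial\bar\partial u$ and integrate by parts, noting that $\chi_{tu}^{m-1}\wedge\omega^{n-m}$ is $d$-closed, to obtain
$$F'(t)=p\int_M e^{-pu}\tfrac{\sqrt{-1}}{2}\partial u\wedge\bar\partial u\wedge P_t+R(t),$$
where $P_t:=k\chi_{tu}^{k-1}\wedge\omega^{n-k}-\sum_l l\alpha_l\chi_{tu}^{l-1}\wedge\omega^{n-l}$ is exactly the form appearing in \eqref{C0-le-3}, and the remainder $R(t):=\sum_{l=1}^{k-1}l\int_M e^{-pu}\,\partial\alpha_l\wedge\tfrac{\sqrt{-1}}{2}\bar\partial u\wedge\chi_{tu}^{l-1}\wedge\omega^{n-l}$ collects the contributions from the non-constant coefficients.

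Integrating in $t$ yields $-F(0)=\int_0^1 F'(t)\,dt$. Restricting the principal term to $[0,1/2]$ and applying \eqref{C0-le-3}, the integrand is bounded below by $c\,\chi_{tu}^{k-2}\wedge\omega^{n-k+1}$, and then \eqref{C0-le-4} gives the cleaner lower bound $cp\int_M e^{-pu}|\partial u|_\omega^2\omega^n$ for the full principal integral (the contribution from $[1/2,1]$ being dropped since $P_t\geq 0$ there). For the correction $R(t)$, apply Cauchy-Schwarz and AM-GM with a small parameter $\varepsilon>0$: the part carrying $|\partial u|^2$ is of the form $\varepsilon p\int_M e^{-pu}\sqrt{-1}\partial u\wedge\bar\partial u\wedge\chi_{tu}^{l-1}\wedge\omega^{n-l}$, which is absorbed into the principal term via the iterated estimate \eqref{C0-le-5} applied with $l+1$ in place of $l$; the remaining piece $\tfrac{C_\varepsilon}{p}\int_M e^{-pu}\chi_{tu}^{l-1}\wedge\omega^{n-l+1}$ is handled by an auxiliary $t$-integration by parts that trades one $\chi_{tu}$-factor for a gradient, reducing once more to a piece controlled by \eqref{C0-le-5} together with a bounded mixed-volume contribution $\int_M e^{-pu}\chi_0^{l-1}\wedge\omega^{n-l+1}\leq C\int_M e^{-pu}\omega^n$. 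Choosing $\varepsilon$ small and $p\geq p_0$ large, all correction terms absorb, and one concludes
$$\int_M e^{-pu}|\partial u|_\omega^2\omega^n\leq\frac{C}{p}\int_M e^{-pu}\omega^n,$$
which is \eqref{C0} after using $|\partial e^{-pu/2}|_g^2=\tfrac{p^2}{4}e^{-pu}|\partial u|_g^2$.

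With \eqref{C0} in hand, apply the Sobolev inequality on $(M,\omega)$ to $v=e^{-pu/2}$: $\|v\|_{L^{2\kappa}}^2\leq C(\|\partial v\|_2^2+\|v\|_2^2)\leq Cp\|v\|_2^2$ with $\kappa=n/(n-1)>1$, equivalent to $\|e^{-u}\|_{L^{p\kappa}}\leq(Cp)^{1/p}\|e^{-u}\|_{L^p}$. A standard Moser bootstrap along $p_j=p_0\kappa^j$ then yields $\|e^{-u}\|_{L^\infty}\leq C$, i.e.\ $-\inf_M u\leq C$; combined with $\sup_M u=0$ this proves \eqref{C00-1}. The main obstacle is the careful bookkeeping of $R(t)$: because $\alpha_l(z)$ are non-constant and some may vanish identically or have indefinite sign in the case of $\alpha_{k-1}$, the lower-degree wedges in $R(t)$ must be absorbed through \eqref{C0-le-5} in a way that depends only on $\|\alpha_l\|_{C^1}$ and not on any lower bound of $\alpha_l$; this is where the cone-driven strict positivity $P_t>0$ on $[0,1)$ supplied by \eqref{C0-le-3} is essential.
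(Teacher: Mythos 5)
Your proposal is correct and follows essentially the same route as the paper: the path identity along $\chi_{tu}$ weighted by $e^{-pu}$, the cone-condition lower bounds \eqref{C0-le-3} and \eqref{C0-le-4} for the principal term, restriction to $t\in[0,\tfrac{1}{2}]$ plus the iterated inequality \eqref{C0-le-5} to absorb the lower-degree wedges for $p$ large, and Moser iteration to pass from \eqref{C0} to \eqref{C00-1}. The only substantive deviation is your handling of the coefficient remainder (a single integration by parts followed by Cauchy--Schwarz, which leaves gradient-weighted correction terms over all of $[0,1]$ and thus still requires the $2^{l}$-rescaling step, in its $(n-1,n-1)$-positivity form, to bring them to $[0,\tfrac{1}{2}]$ before \eqref{C0-le-5} applies), whereas the paper integrates by parts twice so the remainder carries $\tfrac{1}{p}\partial\overline{\partial}\alpha_{l}$ against top-degree forms only, to which \eqref{C0-le-2} applies directly.
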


\begin{proof}
Since
\begin{eqnarray*}
&&(\chi_{u}^k\wedge \omega^{n-k}-\chi_{0}^k\wedge \omega^{n-k})
-\sum_{l=0}^{k-1}\alpha_{l}(\chi_{u}^l\wedge \omega^{n-l}-\chi_{0}^l\wedge \omega^{n-l})
\\&=&\int_{0}^{1}\sqrt{-1}\partial\overline{\partial} u\wedge\Big(k\chi_{tu}^{k-1}\wedge \omega^{n-k}
-\sum_{l=1}^{k-1}l\alpha_{l}\chi_{tu}^{l-1}\wedge \omega^{n-l}\Big)dt,
\end{eqnarray*}
we have
\begin{eqnarray}\label{C0-1}
&&\int_{M}e^{-pu}\Big[(\chi_{u}^{k}\wedge \omega^{n-k}-\chi_{0}^k\wedge \omega^{n-k})
-\sum_{l=0}^{k-1}\alpha_{l}(\chi_{u}^{l}\wedge \omega^{n-l}-\chi_{0}^l\wedge \omega^{n-l})\Big]
\nonumber\\ \nonumber
&=&p\int_{0}^{1}dt\int_{M}e^{-pu}\sqrt{-1}\partial u\wedge \overline{\partial} u\wedge\Big(k\chi_{tu}^{k-1}\wedge \omega^{n-k}
-\sum_{l=1}^{k-1}l\alpha_{l}\chi_{tu}^{l-1}\wedge \omega^{n-l}\Big)\\ \nonumber&&
-\sum_{l=1}^{k-1}\frac{l}{p}\int_{0}^{1}dt\int_{M}e^{-pu}\sqrt{-1}\partial \overline{\partial}\alpha_l \wedge\chi_{tu}^{l-1}\wedge \omega^{n-l}
\nonumber \\ \nonumber&\geq&p\int_{0}^{1}dt\int_{M}e^{-pu}\sqrt{-1}\partial u\wedge \overline{\partial} u\wedge\Big(k\chi_{tu}^{k-1}\wedge \omega^{n-k}
-\sum_{l=1}^{k-1}l\alpha_{l}\chi_{tu}^{l-1}\wedge \omega^{n-l}\Big)\\ &&
-\sum_{l=1}^{k-1}\frac{C_l}{p}\int_{0}^{1}dt\int_{M}e^{-pu}\chi_{tu}^{l-1}\wedge \omega^{n-l+1}
.
\end{eqnarray}
Notice that
\begin{eqnarray}\label{C0-2}
&&\int_{M}e^{-pu}\Big[(\chi_{u}^{k}\wedge \omega^{n-k}-\chi_{0}^k\wedge \omega^{n-k})
-\sum_{l=0}^{k-1}\alpha_{l}(\chi_{u}^{l}\wedge \omega^{n-l}-\chi_{0}^l\wedge \omega^{n-l})\Big]
\\&=& \int_{M}e^{-pu}\Big(-\chi_{0}^k\wedge \omega^{n-k}
+\sum_{l=0}^{k-1}\alpha_{l}\chi_{0}^l\wedge \omega^{n-l}\Big)\nonumber \\ \nonumber&\leq&C\int_{M}e^{-pu}\omega^{n}.
\end{eqnarray}
Using the inequality \eqref{C0-le-2}, we obtain
\begin{eqnarray}\label{C0-3}
&&\int_{0}^{1}dt\int_{M}e^{-pu}\chi_{tu}^{l-1}\wedge \omega^{n-l+1}
\nonumber\\&\leq&2^{l-1}\int_{0}^{1}dt\int_{M}e^{-pu}\chi_{\frac{tu}{2}}^{l-1}\wedge \omega^{n-l+1}\nonumber \\&\leq&2^{l}\int_{0}^{\frac{1}{2}}dt\int_{M}e^{-pu}\chi_{tu}^{l-1}\wedge \omega^{n-l+1}.
\end{eqnarray}
Plugging the inequalities \eqref{C0-2} and \eqref{C0-3} into \eqref{C0-1}, it yields
\begin{eqnarray}\label{C0-4}
&&p\int_{0}^{1}dt\int_{M}e^{-pu}\sqrt{-1}\partial u\wedge \overline{\partial} u\wedge\Big(k\chi_{tu}^{k-1}\wedge \omega^{n-k}
-\sum_{l=1}^{k-1}l\alpha_{l}\chi_{tu}^{l-1}\wedge \omega^{n-l}\Big)\\ \nonumber&\leq&
\sum_{l=1}^{k-1}\frac{2^{l}C_l}{p}\int_{0}^{\frac{1}{2}}dt\int_{M}e^{-pu}\chi_{tu}^{l-1}\wedge \omega^{n-l+1}
+C\int_{M}e^{-pu}\omega^{n}.
\end{eqnarray}
We deal with the first term on the right side of the inequality \eqref{C0-4} according to the inequality \eqref{C0-le-5}
\begin{eqnarray}\label{C0-5}
&&\frac{1}{p}\int_{0}^{\frac{1}{2}}dt\int_{M}e^{-pu}\chi_{tu}^{l-1}\wedge \omega^{n-l+1}
\nonumber \\ \nonumber&=&(l-1)\int_{0}^{\frac{1}{2}}dt\int_{0}^{t}ds\int_{M}e^{-pu}
\sqrt{-1}\partial u\wedge \overline{\partial} u\wedge\chi_{su}^{l-2}\wedge \omega^{n-l+1}\nonumber \\&&+
\frac{1}{2p}\int_{M}e^{-pu}\chi_{0}^{l-1}\wedge \omega^{n-l+1}\nonumber \\&\leq&\frac{l-1}{2}\int_{0}^{\frac{1}{2}}dt\int_{M}e^{-pu}
\sqrt{-1}\partial u\wedge \overline{\partial} u\wedge\chi_{tu}^{l-2}\wedge \omega^{n-l+1}\nonumber \\&&+
\frac{1}{2p}\int_{M}e^{-pu}\chi_{0}^{l-1}\wedge \omega^{n-l+1}\nonumber \\
&\leq&\frac{k-1}{\tau^{k-l}}\int_{0}^{\frac{1}{2}}dt\int_{M}e^{-pu}
\sqrt{-1}\partial u\wedge \overline{\partial} u\wedge\chi_{tu}^{k-2}\wedge \omega^{n-k+1}\\ \nonumber &&+
\frac{C}{2p}\int_{M}e^{-pu}\omega^{n}.
\end{eqnarray}
To cancel the first term in the right hand of \eqref{C0-5}, we will use part of the left term in \eqref{C0-4}. In
details, we can get the following positive term for $0\leq t\leq \frac{1}{2}$ from the inequality \eqref{C0-le-3}
\begin{eqnarray}\label{C0-6}
&&pe^{-pu}\sqrt{-1}\partial u\wedge \overline{\partial} u\wedge\Big(k\chi_{tu}^{k-1}\wedge \omega^{n-k}
-\sum_{l=1}^{k-1}l\alpha_{l}\chi_{tu}^{l-1}\wedge \omega^{n-l}\Big)\nonumber\\&\geq&
Cpe^{-pu}\sqrt{-1}\partial u\wedge \overline{\partial} u\wedge\chi_{tu}^{k-2}\wedge \omega^{n-k+1}.
\end{eqnarray}
Thus, if we choose $p$ sufficiently large, the integral of the term \eqref{C0-6} on $M$ can
kill the first term in the right hand of \eqref{C0-5}. Then, \eqref{C0-4} becomes
\begin{eqnarray*}\label{C0-7}
&&\frac{p}{2}\int_{0}^{1}dt\int_{M}e^{-pu}\sqrt{-1}\partial u\wedge \overline{\partial} u\wedge\Big(k\chi_{tu}^{k-1}\wedge \omega^{n-k}
-\sum_{l=0}^{k-1}l\alpha_{l}\chi_{tu}^{l-1}\wedge \omega^{n-l}\Big)\nonumber\\&\leq&
C\int_{M}e^{-pu}\omega^{n},
\end{eqnarray*}
which implies in view of \eqref{C0-le-4}
\begin{eqnarray*}\label{C0-7}
p \int_{M}e^{-pu}\sqrt{-1}\partial u\wedge \overline{\partial} u\wedge \omega^{n-1}\leq C\int_{M}
e^{-pu}\omega^{n}.
\end{eqnarray*}
So, our proof is completed.
\end{proof}

\section{$C^2$ estimate}

\subsection{Notations and some lemmas}

In local complex coordinates $(z^1, ..., z^n)$, the subscripts of a function $u$ always denote the covariant derivatives of $u$
with respect to $\omega$ in the directions of the local frame $\frac{\partial }{\partial z^1}, ..., \frac{\partial}{\partial z^n}$.
Namely,
\begin{eqnarray*}
u_i=\nabla_{\frac{\partial}{\partial z^i}}u,
\quad u_{i\overline{j}}=\nabla_{\frac{\partial}{\partial \overline{z}^j}}
\nabla_{\frac{\partial}{\partial z^i}}u, \quad u_{i\overline{j}k}=\nabla_{\frac{\partial}{\partial z^k}}\nabla_{\frac{\partial}{\partial \overline{z}^j}}
\nabla_{\frac{\partial}{\partial z^i}}u.
\end{eqnarray*}
But, the covariant derivatives of a $(1, 1)$-form $\chi$
with respect to $\omega$ will be denoted by indices with
semicolons, e.g.,
\begin{eqnarray*}
\chi_{i\overline{j}; k}=\nabla_{\frac{\partial}{\partial z^k}}
\chi(\frac{\partial}{\partial z^i}, \frac{\partial}{\partial \overline{z}^j}), \quad
\chi_{i\overline{j}; k\overline{l}}=\nabla_{\frac{\partial}{\partial \overline{z}^{l}}}\nabla_{\frac{\partial}{\partial z^k}}
\chi(\frac{\partial}{\partial z^i}, \frac{\partial}{\partial \overline{z}^j}).
\end{eqnarray*}

We recall the following commutation formula on K\"ahler manifolds $(M, \omega)$ \cite{Hou10, Guan10, Guan12}.

\begin{lemma}\label{2rd}
For $u \in C^4(M)$, we have
\begin{eqnarray*}
u_{i\overline{j}l}=u_{i\overline{l}j}-u_p R_{l \overline{j} i}^{\ \ p}, \quad
u_{p\overline{j}\overline{m}}=u_{p\overline{m}\overline{j}}, \quad
u_{i\overline{q}l}=u_{l\overline{q}i},
\end{eqnarray*}
\begin{eqnarray*}
u_{i\overline{j}l\overline{m}}=
u_{l\overline{m}i\overline{j}}+u_{p\overline{j}}R_{l\overline{m}i}^{\ \ \ p}
-u_{p\overline{m}}R_{i\overline{j}l}^{\ \ \ p},
\end{eqnarray*}
where $R$ is the curvature tensor of $(M, \omega)$.
\end{lemma}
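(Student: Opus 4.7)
The plan is to derive each of these as an instance of the Ricci (commutation) identity for covariant derivatives on the K\"ahler manifold $(M,\omega)$. The essential fact to exploit is that on $(M,\omega)$ the Chern connection coincides with the Levi-Civita connection, and in holomorphic local coordinates the only non-vanishing Christoffel symbols are the pure types $\Gamma^{k}_{ij}$ and $\Gamma^{\bar k}_{\bar i \bar j}$, both symmetric in their lower indices. In particular, for a scalar $u$ the mixed-type Christoffel symbols contribute nothing, so $u_{i\overline{j}}=\partial_i\partial_{\overline{j}}u$ with no correction, and scalar partial derivatives commute freely.

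For the first three identities I would just expand both sides in local coordinates. Writing $u_{i\overline{j}l}=\nabla_l u_{i\overline{j}}=\partial_l u_{i\overline{j}}-\Gamma^{p}_{li}u_{p\overline{j}}$ and analogously $u_{i\overline{l}j}=\partial_j u_{i\overline{l}}-\Gamma^{p}_{ji}u_{p\overline{l}}$, the scalar partial derivative parts $\partial_l\partial_{\overline{j}}\partial_i u$ and $\partial_j\partial_{\overline{l}}\partial_i u$ agree, and the difference collapses to the single term $-(\partial_{\overline{j}}\Gamma^{p}_{li})u_{p}$, which is $-u_p R_{l\overline{j}i}^{\ \ p}$ in the K\"ahler convention $R_{l\overline{j}i}^{\ \ p}=-\partial_{\overline{j}}\Gamma^{p}_{li}$. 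The same expansion, now using the symmetry of $\Gamma^{\bar k}_{\bar m\bar j}$ in its antiholomorphic lower indices, gives $u_{p\overline{j}\overline{m}}=u_{p\overline{m}\overline{j}}$, and the symmetry of $\Gamma^{k}_{li}$ in its lower indices gives $u_{i\overline{q}l}=u_{l\overline{q}i}$; in these two cases the Christoffel contributions cancel outright and no curvature term is produced.

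For the fourth-order identity I would regard $u_{i\overline{j}}$ as a smooth $(1,1)$-tensor and apply the standard Ricci identity to the commutator $[\nabla_{\overline{m}},\nabla_l]u_{i\overline{j}}$. On a K\"ahler manifold this commutator produces one curvature contribution per index of the tensor, namely $u_{p\overline{j}}R_{l\overline{m}i}^{\ \ \ p}-u_{i\overline{p}}R_{l\overline{m}\overline{j}}^{\ \ \ \overline{p}}$. Combining this with the identity $u_{i\overline{j}\overline{m}l}=u_{l\overline{m}i\overline{j}}$, which I would obtain by iterating the third-order symmetries already established together with $u_{i\overline{j}}=u_{\overline{j}i}$, yields the stated formula once the second curvature term is rewritten via the K\"ahler symmetry of the curvature tensor so that its indices match the pattern $R_{i\overline{j}l}^{\ \ \ p}$ appearing in the statement.

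The main obstacle is purely bookkeeping: keeping the sign convention for $R_{l\overline{j}i}^{\ \ p}$ (as a derivative of a Christoffel symbol versus as a commutator of covariant derivatives) consistent, and tracking on which tensor type each Ricci identity is being applied. There is no analytic subtlety here; these are the standard commutation formulas on K\"ahler manifolds, and once a single sign convention is fixed each identity reduces to a short computation in local holomorphic coordinates, which is why the author (like in \cite{Hou10, Guan10, Guan12}) states the lemma essentially as a reference.
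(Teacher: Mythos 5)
The paper does not prove this lemma at all (it is recalled from \cite{Hou10, Guan10, Guan12}), so what matters is whether your coordinate/Ricci-identity derivation is correct, and as written it is not. For the first identity, your claim that the scalar parts $\partial_l\partial_{\overline{j}}\partial_i u$ and $\partial_j\partial_{\overline{l}}\partial_i u$ ``agree'' is false: these are third partials with respect to different variables, and commutativity of partial derivatives does not identify them. Moreover, in the two expansions you display no derivative of a Christoffel symbol ever appears, so their difference cannot ``collapse'' to $-(\partial_{\overline{j}}\Gamma^{p}_{li})u_p$. The curvature term arises from commuting the \emph{second and third} covariant derivatives: comparing $u_{i\overline{j}l}=\partial_l\partial_{\overline{j}}\partial_i u-\Gamma^{p}_{li}u_{p\overline{j}}$ with $u_{il\overline{j}}=\partial_{\overline{j}}\bigl(\partial_l\partial_i u-\Gamma^{p}_{li}u_p\bigr)$ gives $u_{i\overline{j}l}=u_{il\overline{j}}-u_pR_{l\overline{j}i}^{\ \ p}$; the printed pattern $u_{i\overline{l}j}$ has to be read this way, since taken literally the identity already fails for the flat metric on $\mathbb{C}^2$ (take $u=|z^1|^2(z^2+\overline{z}^2)$, $i=j=1$, $l=2$: then $u_{1\overline{1}2}=1$ while $u_{1\overline{2}1}=0$ and $R\equiv0$). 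Your treatment of $u_{p\overline{j}\overline{m}}=u_{p\overline{m}\overline{j}}$ and $u_{i\overline{q}l}=u_{l\overline{q}i}$ is fine.

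For the fourth-order identity the gap is structural. The intermediate claim $u_{i\overline{j}\overline{m}l}=u_{l\overline{m}i\overline{j}}$ does \emph{not} follow from the third-order symmetries: using $u_{i\overline{j}\overline{m}}=u_{i\overline{m}\overline{j}}$ and $u_{i\overline{m}l}=u_{l\overline{m}i}$ reduces it to interchanging $\nabla_l$ and $\nabla_{\overline{j}}$ on the tensor $u_{i\overline{m}}$, which itself produces curvature terms. Nor can the leftover term $u_{i\overline{p}}R_{l\overline{m}\overline{j}}^{\ \ \ \overline{p}}$ from your single commutator be converted into $u_{p\overline{m}}R_{i\overline{j}l}^{\ \ \ p}$ by curvature symmetries: at a point with $g_{i\overline{j}}=\delta_{ij}$ and $u_{i\overline{j}}$ diagonal these equal $u_{i\overline{i}}R_{i\overline{m}l\overline{j}}$ and $u_{m\overline{m}}R_{i\overline{m}l\overline{j}}$ respectively (no summation), which differ in general. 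The correct bookkeeping applies the commutation identity twice: first $u_{i\overline{j}l\overline{m}}=u_{i\overline{j}\overline{m}l}+u_{p\overline{j}}R_{l\overline{m}i}^{\ \ \ p}-u_{i\overline{p}}R_{l\overline{m}\overline{j}}^{\ \ \ \overline{p}}$, then $u_{i\overline{j}\overline{m}l}=u_{i\overline{m}\overline{j}l}$, then commuting $\nabla_l$ past $\nabla_{\overline{j}}$ on $u_{i\overline{m}}$ gives $u_{i\overline{m}\overline{j}l}=u_{i\overline{m}l\overline{j}}-u_{p\overline{m}}R_{l\overline{j}i}^{\ \ \ p}+u_{i\overline{p}}R_{l\overline{j}\overline{m}}^{\ \ \ \overline{p}}$, and finally $u_{i\overline{m}l\overline{j}}=u_{l\overline{m}i\overline{j}}$. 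The two mixed terms cancel precisely because of the K\"ahler symmetry $R_{l\overline{j}q\overline{m}}=R_{l\overline{m}q\overline{j}}$, and $R_{l\overline{j}i}^{\ \ \ p}=R_{i\overline{j}l}^{\ \ \ p}$ then yields the stated formula. That second commutation, which is where the term $-u_{p\overline{m}}R_{i\overline{j}l}^{\ \ \ p}$ actually originates, is missing from your argument.
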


For the convenience of notations, we will denote
\begin{equation*}
F_k(\chi):= \frac{\sigma_k(\chi)}{\sigma_{k-1}(\chi)},
\ \ F_l(\chi) := -\frac{\sigma_l(\chi)}{\sigma_{k-1}(\chi)},~ 0\leq l\leq k-2,
\end{equation*}
\begin{equation*}
F(\chi, z):= F_k(\chi) + \sum_{l=0}^{k-2} \beta_l(z) F_l(\chi),
\end{equation*}
and
\begin{equation*}
F^{i\overline{j}} :=\frac{\partial F}{\partial \chi_{i\overline{j}}}, \quad
F^{i\overline{j}, k\overline{l}} :=\frac{\partial^2 F}{\partial \chi_{i\overline{j}}
\partial \chi_{r\overline{s}}},\quad
F_{l}^{i\overline{j}} :=\frac{\partial F_l}{\partial \chi_{i\overline{j}}}, \quad
F_{l}^{i\overline{j}, r\overline{s}} :=\frac{\partial^2 F}{\partial \chi_{i\overline{j}}
\partial \chi_{r\overline{s}}},
\end{equation*}
where $1 \leq i, j, r, s\leq n$ and $0\leq l\leq k-2.$

\begin{lemma}\label{C^2-1}
Let $u \in C^4(M)$ be a solution to the equation \eqref{K-eq2} on $(M, \omega)$ and $\chi \in \Gamma_{k-1}(M)$.
For any $z \in M$, we choose a normal coordinate such
that at this point $\omega=\frac{\sqrt{-1}}{2}\delta_{ij}dz^i \wedge d\overline{z}^j$.
Then, we have for any $\delta<1$ at $z$
\begin{eqnarray}\label{C^2-1}
F^{i\overline{j}}\chi_{i\overline{j}; p\overline{p}}&\geq&-(1-\delta^2)F^{i\overline{j},r\overline{s}}\chi_{i\overline{j};
p}\chi_{r\overline{s}; \overline{p}}+\nabla_{\overline{p}}\nabla_p\beta_{k-1}
\nonumber\\&&+\sum_{l=0}^{k-2}\frac{1}{1+\frac{1}{k+1-l}}\frac{|\nabla_p\beta_l|^2}{\delta^2\beta_l}F_l-\sum_{l=0}^{k-2}
\nabla_{\overline{p}}\nabla_p\beta_{l}\cdot F_{l}.
\end{eqnarray}
\end{lemma}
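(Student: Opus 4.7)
The plan is to differentiate the equation $F(\chi_u,z)=\beta_{k-1}(z)$ from \eqref{K-eq2} once in $\partial_p$ and once in $\partial_{\overline{p}}$, taking care that the coefficient matrix $F^{i\overline{j}}=F_k^{i\overline{j}}+\sum_{l=0}^{k-2}\beta_l(z)F_l^{i\overline{j}}$ depends explicitly on $z$. Collecting the terms arising from both the $\chi$-dependence and the $z$-dependence (and using that on a K\"ahler manifold mixed covariant derivatives of scalars commute) will produce the pointwise identity
\begin{eqnarray*}
F^{i\overline{j}}\chi_{i\overline{j};p\overline{p}}&=&-F^{i\overline{j},r\overline{s}}\chi_{i\overline{j};p}\chi_{r\overline{s};\overline{p}}+\nabla_{\overline{p}}\nabla_p\beta_{k-1}-\sum_{l=0}^{k-2}\nabla_{\overline{p}}\nabla_p\beta_l\cdot F_l\\
&&-\sum_{l=0}^{k-2}\Big(\nabla_{\overline{p}}\beta_l\cdot F_l^{i\overline{j}}\chi_{i\overline{j};p}+\nabla_p\beta_l\cdot F_l^{i\overline{j}}\chi_{i\overline{j};\overline{p}}\Big).
\end{eqnarray*}
Three of the terms on the right already match \eqref{C^2-1}, so the remaining task is to dispose of the final sum of cross terms while still keeping the $(1-\delta^2)$-factor in front of the concavity term.

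To do this, I will spend a $\delta^2$-slice of the nonnegative quantity $-F^{i\overline{j},r\overline{s}}\chi_{i\overline{j};p}\chi_{r\overline{s};\overline{p}}$---nonnegative by concavity of $F$ on $\Gamma_{k-1}$ from Lemma~\ref{lem2.5}---to dominate these cross terms. Writing $F=F_k+\sum_l\beta_l F_l$ with each of $F_k$ (from Proposition~\ref{Con}) and the individual $F_l$ (a byproduct of the proof of Lemma~\ref{lem2.5}) concave on $\Gamma_{k-1}$, the decomposition
\begin{eqnarray*}
-\delta^2 F^{i\overline{j},r\overline{s}}\chi\chi=-\delta^2 F_k^{i\overline{j},r\overline{s}}\chi\chi-\delta^2\sum_{l=0}^{k-2}\beta_l F_l^{i\overline{j},r\overline{s}}\chi\chi
\end{eqnarray*}
splits this slice into nonnegative pieces. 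I will drop the $F_k$-piece and allocate the $l$-th summand $-\delta^2\beta_l F_l^{i\overline{j},r\overline{s}}\chi\chi\ge 0$ to pair against the $l$-th cross term.

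For each fixed $l$, completing the square in the complex number $\nabla_p\beta_l$ will reduce the $l$-th absorption step to the pointwise Cauchy--Schwarz-type inequality
\begin{eqnarray*}
|F_l^{i\overline{j}}\chi_{i\overline{j};p}|^2\le\frac{1}{1+\frac{1}{k+1-l}}(-F_l)\Big(-F_l^{i\overline{j},r\overline{s}}\chi_{i\overline{j};p}\chi_{r\overline{s};\overline{p}}\Big),
\end{eqnarray*}
and verifying this inequality is the main obstacle. I expect to obtain it from the concavity of the power $(\sigma_{k-1}/\sigma_l)^{1/(k-1-l)}$ on $\Gamma_{k-1}$ supplied by Proposition~\ref{Con}: writing out the condition that the $(1,1)$-Hessian of this power is nonpositive and then re-expressing in terms of $F_l=-\sigma_l/\sigma_{k-1}$ yields a quadratic-form inequality between the first and second derivatives of $F_l$ with sharper constant $\tfrac{k-1-l}{k-l}<\tfrac{1}{1+1/(k+1-l)}$, which is more than enough to run the absorption. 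Once this Cauchy--Schwarz bound is in hand, summing over $l$ and re-assembling with the identity from the first paragraph will give \eqref{C^2-1}.
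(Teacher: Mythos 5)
Your proposal is correct and follows essentially the same route as the paper: differentiate \eqref{K-eq2} twice, sacrifice a $\delta^2$-portion of the concavity term (dropping the $F_k$-piece by Proposition \ref{Con}), and absorb the cross terms $\nabla_{\overline p}\beta_l\cdot F_l^{i\overline j}\chi_{i\overline j;p}+\nabla_p\beta_l\cdot F_l^{i\overline j}\chi_{i\overline j;\overline p}$ by completing the square against $-\delta^2\beta_lF_l^{i\overline j,r\overline s}\chi_{i\overline j;p}\chi_{r\overline s;\overline p}$, using exactly the inequality \eqref{93} that the paper derives from the concavity of $(\sigma_{k-1}/\sigma_l)^{1/(k-1-l)}$ on $\Gamma_{k-1}$. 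Your ``sharper'' constant $\frac{k-1-l}{k-l}$ is in fact identical to the paper's $\frac{1}{1+\frac{1}{k-1-l}}$ (the $k+1-l$ in the displayed statement is evidently a typo), and since $F_l\le 0$ it indeed implies the stated bound.
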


\begin{proof}
Differentiating the equation \eqref{K-eq2} once, we have
\begin{eqnarray*}
\nabla_p\beta_{k-1}=F^{i\overline{j}}\chi_{i\overline{j}; p}
+\sum_{l=0}^{k-2}\nabla_p \beta_{l}\cdot F_{l}.
\end{eqnarray*}
Differentiating the equation \eqref{K-eq2} again, we obtain
\begin{eqnarray*}
\nabla_{\overline{p}}\nabla_p\beta_{k-1}&=&F^{i\overline{j},r\overline{s}}\chi_{i\overline{j};
p}\chi_{r\overline{s}; \overline{p}} +F^{i\overline{j}}\chi_{i\overline{j};
p\overline{p}}+\sum_{l=0}^{k-2}\Big(\nabla_{\overline{p}}\beta_{l}\cdot F_{l}^{i\overline{j}}\chi_{i\overline{j};
p}+\nabla_{p}\beta_{l} \cdot F_{l}^{i\overline{j}}\chi_{i\overline{j};
\overline{p}}\Big)\\&&+\sum_{l=0}^{k-2}\nabla_{\overline{p}}\nabla_p\beta_{l} \cdot F_{l}.
\end{eqnarray*}
Moreover, since the operator $(\frac{\sigma_{k-1}}{\sigma_l})^{\frac{1}{k-1-l}}$
is concave for $0\leq l\leq k-2$ (see Proposition \ref{Con}), we have
\begin{eqnarray}
-F_l^{i\overline{j},r\overline{s}}\chi_{i\overline{j};p}\chi_{r\overline{s};\overline{p}} \ge
-\big(1+\frac{1}{k-1-l}\big)F_l^{-1}F_l^{i\overline{j}}F_l^{r\overline{s}}\chi_{i\overline{j};p}\chi_{r\overline{s};
\overline{p}}.\label{93}
\end{eqnarray}
Since $F_k$ is concave in $\Gamma_{k-1}$, throwing away the negative term
$$\delta^2\frac{\partial^2 F_{k}}{\partial \chi_{i\overline{j}}
\partial \chi_{r\overline{s}}}
\chi_{i\overline{j};p}\chi_{r\overline{s}
;\overline{p}}$$
gives
\begin{eqnarray*}
&&\nabla_{\overline{p}}\nabla_p\beta_{k-1}-(1-\delta^2)F^{i\overline{j},r\overline{s}}
\chi_{i\overline{j};p}\chi_{r\overline{s}
;\overline{p}}\\&\leq&\sum_{l=0}^{k-2}\delta^2\beta_lF_{l}^{i\overline{j},r\overline{s}}
\chi_{i\overline{j};p}\chi_{r\overline{s}
;\overline{p}}+F^{i\overline{j}}\chi_{i\overline{j};
p\overline{p}}+\sum_{l=0}^{k-2}\Big(\nabla_{\overline{p}}\beta_{l}\cdot F_{l}^{i\overline{j}}\chi_{i\overline{j};
p}+\nabla_{p}\beta_{l}\cdot F_{l}^{i\overline{j}}\chi_{i\overline{j};
\overline{p}}\Big)\\&&+\sum_{l=0}^{k-2}\nabla_{\overline{p}}\nabla_p\beta_{l}\cdot F_{l}\\&\leq&
\delta^2\sum_{l=0}^{k-2}\beta_l\big(1+\frac{1}{k-1-l}\big)F_l^{-1}|F_l^{i\overline{j}}\chi_{i\overline{j}
;p}|^2+F^{i\overline{j}}\chi_{i\overline{j};
p\overline{p}}\\&&+\sum_{l=0}^{k-2}\Big(\nabla_{\overline{p}}\beta_{l} \cdot F_{l}^{i\overline{j}}\chi_{i\overline{j};
p}+\nabla_{p}\beta_{l}\cdot F_{l}^{i\overline{j}}\chi_{i\overline{j};
\overline{p}}\Big)+\sum_{l=0}^{k-2}\nabla_{\overline{p}}\nabla_p\beta_{l} \cdot F_{l}
\\&=&\frac{\delta^2(k-l)}{k-1-l}
\sum_{l=1}^{k-2}\beta_lF_l^{-1}\bigg|F_l^{i\overline{j}}\chi_{i\overline{j};
p}+\frac{1}{1+\frac{1}{k-1-l}}\frac{\nabla_p\beta_l}{\delta^2 \beta_l}F_l\bigg|^2\\&&-
\sum_{l=0}^{k-2}\frac{1}{1+\frac{1}{k-1-l}}\frac{|\nabla_p\beta_l|^2}{\delta^2\beta_l}F_l+F^{i\overline{j}}\chi_{i\overline{j};
p\overline{p}}+\sum_{l=0}^{k-2}\nabla_{\overline{p}}\nabla_p\beta_{l}\cdot F_{l}\\&\leq&
-\sum_{l=0}^{k-2}\frac{1}{1+\frac{1}{k-1-l}}\frac{|\nabla_p\beta_l|^2}{\delta^2\beta_l}F_l+F^{i\overline{j}}\chi_{i\overline{j};
p\overline{p}}+\sum_{l=0}^{k-2}\nabla_{\overline{p}}\nabla_p\beta_{l} \cdot F_{l},
\end{eqnarray*}
where we used the inequality \eqref{93} to get the second inequality.
So, our proof is completed.
\end{proof}

\subsection{$C^2$ estimate}

\begin{lemma}
Let $(M, \omega)$ be a closed K\"ahler manifold of complex dimension $n\geq2$.
Suppose that $\chi_0$ satisfies the cone condition \eqref{cone} and $\alpha_0(z)$,
$\alpha_1(z)$, $...$, $\alpha_{k-1}(z)$ are real $C^2$ functions on $M$
satisfying (i) and (ii) in Assumption \ref{Ass}.
If $u \in C^4(M)$ is a solution to the equation \eqref{K-eq2} with $\chi_u \in \Gamma_{k-1}(M)$, we have
\begin{eqnarray*}
\sup_{M}|\partial \overline{\partial} u|\leq C(\sup_{M}|\nabla u|^2+1),
\end{eqnarray*}
where the constant $C$ depends on $M, \omega, \chi_0$, $|u|_{C^0}$, $\inf_{M}\alpha_l$ ($0\leq l\leq k-2$) and $|\alpha_i|_{C^2}$ ($0\leq i\leq k-1$).
\end{lemma}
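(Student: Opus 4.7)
The plan is to follow the Hou--Ma--Wu/Chou--Wang maximum principle argument adapted to the Krylov-type operator \eqref{K-eq2}, with all the new $\beta_l$-terms absorbed via Lemma~\ref{C^2-1} and Lemmas~\ref{C2-212}, \ref{lem2.6}, \ref{lem2.7}. Consider the test quantity
\begin{equation*}
G = \log \lambda_1(\chi_u) + \varphi(|\nabla u|^2) + \psi(u),
\end{equation*}
where $\lambda_1 \geq \lambda_2 \geq \cdots \geq \lambda_n$ are the eigenvalues of $\chi_u$ with respect to $\omega$, and $\varphi, \psi$ are suitable one-variable functions (e.g.\ $\varphi(t) = -\tfrac12 \log(1 - t/K)$ with $K = 2\sup|\nabla u|^2 + 1$, and $\psi(t) = A e^{-t}$ with $A$ large). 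Let $z_0$ be a maximum point of $G$; pick normal coordinates for $\omega$ at $z_0$ diagonalizing $\chi_u$, and use the standard perturbation of $\lambda_1$ to treat it as smooth. At $z_0$ one has the critical point relations $G_i = 0$ together with $F^{i\bar i}G_{i\bar i} \leq 0$.

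Expanding $F^{i\bar i}G_{i\bar i}\le 0$ produces four groups of terms: (i) the trace $F^{i\bar i}\chi_{1\bar 1;i\bar i}/\chi_{1\bar 1}$, which I replace via Lemma~\ref{C^2-1} applied in the direction $p=1$; (ii) the gradient term $\varphi'(|\nabla u|^2) F^{i\bar i}(|\nabla u|^2)_{i\bar i}$, handled by differentiating \eqref{K-eq2} once and using the commutation formula of Lemma~\ref{2rd} to turn it into $F^{i\bar i}|u_{ip}|^2 + F^{i\bar i}|u_{\bar i p}|^2 + \psi'\, F^{i\bar i}\chi_{i\bar i}$ plus lower-order curvature contributions; (iii) the ``good'' convex terms $\varphi'' F^{i\bar i}|(|\nabla u|^2)_i|^2 + \psi'' F^{i\bar i}|u_i|^2$; and (iv) the third-order term $-F^{i\bar i}|\chi_{1\bar 1;i}|^2/\chi_{1\bar 1}^2$. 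The crucial point is that Lemma~\ref{C^2-1} already packages the extra third-order term \eqref{Extra} coming from $\nabla \beta_l \cdot F_l^{i\bar j}\chi_{i\bar j;1}$ into the quadratic-in-$\nabla\beta_l$ remainder $\sum_l \frac{c_l|\nabla_1\beta_l|^2}{\delta^2\beta_l} F_l$, which is bounded by Lemma~\ref{lem2.6} since $\beta_l \geq \inf \beta_l > 0$ and $F_l = -\sigma_l/\sigma_{k-1}$ is a priori controlled.

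With this packaging the inequality at $z_0$ reduces, after dividing by $\chi_{1\bar 1}$ and using the critical point identity $\chi_{1\bar 1;i}/\chi_{1\bar 1} = -\varphi'(|\nabla u|^2)_i - \psi' u_i$, to an inequality of the schematic form
\begin{equation*}
0 \;\geq\; -(1-\delta^2) \frac{F^{i\bar j, r\bar s}\chi_{i\bar j;1}\chi_{r\bar s;\bar 1}}{\chi_{1\bar 1}} - \frac{F^{i\bar i}|\chi_{1\bar 1;i}|^2}{\chi_{1\bar 1}^2} + \psi'' F^{i\bar i}|u_i|^2 + \psi' F^{i\bar i}\chi_{i\bar i} - C\Big(1 + \sum_i F^{i\bar i}\Big).
\end{equation*}
I then split indices into a ``bad'' set $B = \{i : \lambda_i \geq -\eta\lambda_1\}$ containing $1$ and a ``good'' set $G$, with $\eta$ small. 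The $i\in B$ part of $F^{i\bar i}|\chi_{1\bar 1;i}|^2/\chi_{1\bar 1}^2$ is absorbed by $(1-\delta^2)$ times the concavity term using the Andrews--Gerhardt inequality on the mixed off-diagonal $F^{i\bar i}-F^{1\bar 1}$ quotients---here the concavity of $\sigma_k/\sigma_{k-1}$ on $\Gamma_{k-1}$ (Proposition~\ref{Con}) plus Proposition~\ref{prop2.1} type identities give the required sign, and the loss $\delta^2$ is negligible once it is absorbed by $\varphi''$. The $i\in G$ part is controlled by the critical point relation and the gradient term $\psi'' F^{i\bar i}|u_i|^2$.

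Finally, to close the argument, apply Lemma~\ref{C2-212} with $\mu$ chosen as a cone-condition barrier furnished by $\chi_0$ (which satisfies \eqref{cone}). If the first alternative of Lemma~\ref{C2-212} holds, then $\psi'\sum F^{i\bar i}(\chi_{i\bar i}-\mu_i)$ gives a strong negative term $\leq -\theta\psi'(1+\sum F^{i\bar i})$, which overwhelms all the $C(1+\sum F^{i\bar i})$ errors and yields $\lambda_1 \leq C(\sup|\nabla u|^2+1)$. If the second alternative $f_1\lambda_1 \geq \theta$ holds, one uses $F^{1\bar 1}\geq \theta/\lambda_1$ together with the gradient term to gain a $\chi_{1\bar 1;1}/\chi_{1\bar 1}$ bound and concludes likewise. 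The lower bound $\sum F^{i\bar i}\geq (n-k+1)/k$ from Lemma~\ref{lem2.7} keeps all intermediate ellipticity constants uniform. I expect the main technical obstacle to be step (iv): carefully apportioning the available concavity of $\sigma_k/\sigma_{k-1}$ so that the amount $(1-\delta^2)$ used against the $F^{i\bar i}|\chi_{1\bar 1;i}|^2/\chi_{1\bar 1}^2$ third-order term is compatible with the $\delta^2$-fraction already consumed in Lemma~\ref{C^2-1} to neutralize the extra term \eqref{Extra}; this is precisely the delicate bookkeeping highlighted in the introduction.
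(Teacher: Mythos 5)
Your proposal is correct in outline and follows essentially the same route as the paper: the Hou--Ma--Wu test function $\log\chi_{1\bar 1}+\varphi(|\nabla u|^2)+\psi(u)$, Lemma~\ref{C^2-1} to absorb the extra third-order terms \eqref{Extra} at the cost of a $\delta^2$-fraction of the concavity, the eigenvalue/index-set case split with the Andrews formula \eqref{f-2}, and the final dichotomy of Lemma~\ref{C2-212} (cone-condition barrier from $\chi_0$) closed off with Lemmas~\ref{lem2.6} and~\ref{lem2.7}. The remaining work is only the bookkeeping you already identified, which the paper carries out in its Cases 1, 2, A, B.
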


\begin{proof}
For convenience, in the following argument, we write $\chi=\chi_u$ for short (suppressing the subscript $u$).
Following the work of Hou-Ma-Wu \cite{Hou10}, we define for $z \in M$
and a unit vector $\xi \in T_{z}^{(1, 0)}M$
\begin{eqnarray*}
W(z, \xi)=\ln (\chi_{ij}\xi^i\xi^j)+\varphi(|\nabla u|^2)+\psi(u),
\end{eqnarray*}
where
\begin{eqnarray*}
\varphi(s)=-\frac{1}{2}\log \Big(1-\frac{s}{2K}\Big) \quad \mbox{for} \quad 0\leq s\leq K-1
\end{eqnarray*}
and
\begin{eqnarray*}
\psi(t)=-A\log \Big(1+\frac{t}{2L}\Big) \quad \mbox{for} \quad -L+1\leq t\leq 0.
\end{eqnarray*}
Here, we set
\begin{eqnarray*}
K=:\sup_{M}|\nabla u|^2+1, \quad L:=\sup_{M}|u|+1, \quad  A:=2L\Lambda,
\end{eqnarray*}
and $\Lambda$ is a large constant that we will choose later. Clearly, $\varphi$ satisfies
\begin{eqnarray*}
\frac{1}{2K}\geq \varphi^{\prime}\geq \frac{1}{4K}, \quad  \varphi^{\prime \prime}=2(\varphi^{\prime})^2>0,
\end{eqnarray*}
and $\psi$ satisfies the bounds
\begin{eqnarray*}
2\Lambda\geq -\psi^{\prime}\geq \Lambda, \quad  \psi^{\prime \prime}
\geq\frac{2\varepsilon}{1-\varepsilon}(\psi^{\prime})^2 \quad \mbox{for all} \quad \varepsilon\leq\frac{1}{2A+1}.
\end{eqnarray*}

The function $W$ must achieve its maximum at some point $z$ in some unit direction
of $\eta$. Around $z$, we choose a normal chart such that $\chi$ is diagonal and
\begin{eqnarray}\label{chi}
\chi_{\eta\overline{\eta}}=\chi_{1\overline{1}}\geq ...\geq \chi_{n\overline{n}}.
\end{eqnarray}
It follows from \eqref{f-3}
\begin{eqnarray}\label{F^i}
F^{1\overline{1}}\leq ...\leq F^{n\overline{n}}.
\end{eqnarray}
Furthermore, we arrive at $z$
\begin{eqnarray}\label{Diff-1}
W_i=\frac{\chi_{1\overline{1};i}}{\chi_{1\overline{1}}}
+\varphi^{\prime}\nabla_i(|\nabla u|^2)+\psi^{\prime}u_i=0
\end{eqnarray}
and
\begin{eqnarray}\label{Diff-2}
W_{i\overline{i}}&=&\frac{\chi_{1\overline{1};i\overline{i}}}{\chi_{1\overline{1}}}-\frac{|\chi_{1\overline{1};i}|^2}{\chi_{1\overline{1}}^2}
+\varphi^{\prime\prime}|\nabla_i(|\nabla u|^2)|^2
+\varphi^{\prime}\nabla_{\overline{i}}\nabla_i(|\nabla u|^2)\nonumber\\&&+\psi^{\prime\prime}|u_i|^2+\psi^{\prime}u_{i\overline{i}}\leq 0.
\end{eqnarray}
Commuting derivatives, we have the identity from Lemma \ref{2rd}
\begin{eqnarray*}
\chi_{1\overline{1}; i\overline{i}}=\chi_{i\overline{i}; 1\overline{1}}
+\chi_{1\overline{1}}R_{1\overline{1} i\overline{i}}-\chi_{i\overline{i}}
R_{i\overline{i} 1\overline{1}}.
\end{eqnarray*}
Using the above inequality, we get from \eqref{C^2-1}
\begin{eqnarray*}
F^{i\overline{j}}\chi_{1\overline{1}; i\overline{j}}&\geq&-(1-\delta^2)F^{i\overline{j},r\overline{s}}\chi_{i\overline{j};
1}\chi_{r\overline{s}; \overline{1}}-C\chi_{1\overline{1}}\sum_{i}F^{i\overline{i}}-CF_k-C\sum_{l=0}^{k-1}|F_l|\\
&&+\nabla_{\overline{1}}\nabla_1\beta_{k-1}
-\sum_{l=0}^{k-2}\frac{1}{1+\frac{1}{k+1-l}}\frac{|\nabla_1\beta_l|^2}{\delta^4\beta_l}F_l-\sum_{l=0}^{k-2}
\nabla_{\overline{1}}\nabla_1\beta_{l}\cdot F_{l}\\&\geq&-(1-\delta^2)F^{i\overline{j},r\overline{s}}\chi_{i\overline{j};
1}\chi_{r\overline{s}; \overline{1}}-C\chi_{1\overline{1}}\sum_{i}F^{i\overline{i}}-C,
\end{eqnarray*}
where we used the equality $$F^{i\overline{i}}\chi_{i\overline{i}}=F_k+\sum_{l=0}^{k-2}(l-k+1)\beta_lF_l$$ and
the fact that $F_k$ and $F_l$ ($0\leq l\leq k-2$) are bounded (see \eqref{2.9} and \eqref{2.10}).

Multiplying \eqref{Diff-2} by $F^{i\overline{i}}$ and summing it over index $i$, we can know from the above inequality
\begin{eqnarray}\label{C2-3}
0&\geq&-\frac{F^{i\overline{i}}|\chi_{1\overline{1};i}|^2}{\chi_{1\overline{1}}^2}
-(1-\delta^2)\frac{1}{\chi_{1\overline{1}}}F^{i\overline{j},r\overline{s}}\chi_{i\overline{j};
1}\chi_{r\overline{s}; \overline{1}}
\nonumber \\&&+\varphi^{\prime\prime}F^{i\overline{i}}|\nabla_i(|\nabla u|^2)|^2
+\varphi^{\prime}F^{i\overline{i}}\nabla_{\overline{i}}\nabla_{i}(|\nabla u|^2)
+\psi^{\prime\prime}F^{i\overline{i}}|u_i|^2+\psi^{\prime}F^{i\overline{i}}u_{i\overline{i}}  \nonumber \\&&-C\sum_{i}F^{i\overline{i}}-\frac{C}{\chi_{1\overline{1}}}.
\end{eqnarray}
To proceed, we need the following calculation
\begin{eqnarray*}
\nabla_i(|\nabla u|^2)=\sum_{j}(-u_j\chi_{0i\overline{j}}+u_{ji}u_{\overline{j}})+u_i\chi_{ii}
\end{eqnarray*}
and
\begin{eqnarray*}
\nabla_{\overline{i}}\nabla_i(|\nabla u|^2)
&=&\sum_{j}(\chi_{0j\overline{i}}\chi_{0i\overline{j}}+u_{ji}u_{\overline{j}\overline{i}}
-\chi_{0i\overline{i};j}u_{\overline{j}}-u_j\chi_{0i\overline{i}; \overline{j}})
+\sum_{j, k}R_{i\overline{i}j\overline{k}}u_ku_{\overline{j}}\\&&+\chi_{i\overline{i}}^{2}
-2\chi_{0i\overline{i}}\chi_{i\overline{i}}+2\sum_{j} \mathrm{Re}\{\chi_{i\overline{i}; j}u_{\overline{j}}\}
\\&\geq&-2\sum_{j}\mathrm{Re}\{\chi_{0i\overline{i}; j}u_{\overline{j}}\}
+\sum_{j, k}R_{i\overline{i}j\overline{k}}u_ku_{\overline{j}}+\frac{1}{2}\chi_{i\overline{i}}^{2}
-2\chi_{0i\overline{i}}^{2}\\&&+2\sum_{j} \mathrm{Re}\{\chi_{i\overline{i}; j}u_{\overline{j}}\},
\end{eqnarray*}
where $\mathrm{Re} \{z\}$ denotes the real part of the complex number $z$.
Thus, we can estimate the term
\begin{eqnarray}\label{C2-3-1}
&&\varphi^{\prime}\sum_{i}F^{i\overline{i}}\nabla_{\overline{i}}\nabla_i(|\nabla u|^2)
\nonumber\\&\geq&-C\frac{|\nabla u|+|\nabla u|^2}{K}\sum_{i}F^{i\overline{i}}
+\frac{1}{2}\varphi^{\prime}\sum_{i}F^{i\overline{i}}\chi_{i\overline{i}}^{2}
-2\varphi^{\prime}\sum_{i}F^{i\overline{i}}\chi_{0i\overline{i}}^{2}
\nonumber\\&&-2\varphi^{\prime}\sum_{j} \mathrm{Re}\{\sum_{l=0}^{k-1}(\nabla_j \beta_l \cdot F_l)u_{\overline{j}}\}
\nonumber\\&\geq&-C\frac{1+|\nabla u|+|\nabla u|^2}{K}\sum_{i}F^{i\overline{i}}
+\frac{1}{2}\varphi^{\prime}\sum_{i}F^{i\overline{i}}\chi_{i\overline{i}}^{2}.
\end{eqnarray}
where we used the following equality
\begin{eqnarray}\label{C^2-0}
F^{i\overline{j}}\chi_{i\overline{j}; p}=-\sum_{l=0}^{k-1}\nabla_p \beta_l \cdot F_l
\end{eqnarray}
to get the first inequality and \eqref{C^2-0} can be directly deduced by \eqref{K-eq2}.

Taking the inequality \eqref{C2-3-1} into \eqref{C2-3}, it yields
\begin{eqnarray}\label{C2-4}
0&\geq&-\frac{F^{i\overline{i}}|\chi_{1\overline{1};i}|^2}{\chi_{1\overline{1}}^2}
-(1-\delta^2)\frac{1}{\chi_{1\overline{1}}}F^{i\overline{j},r\overline{s}}\chi_{i\overline{j};
1}\chi_{r\overline{s}; \overline{1}}
\nonumber \\&&+\varphi^{\prime\prime}F^{i\overline{i}}|\nabla_i(|\nabla u|^2)|^2
+\frac{\varphi^{\prime}}{2}F^{i\overline{i}}\chi_{i\overline{i}}^2
+\psi^{\prime\prime}F^{i\overline{i}}|u_i|^2+\psi^{\prime}F^{i\overline{i}}u_{i\overline{i}}
\nonumber \\&&-C\sum_{i}F^{i\overline{i}}-C.
\end{eqnarray}
Now, we divide our proof into two cases separately, depending on whether
$\chi_{n\overline{n}}<-\delta\chi_{1\overline{1}}$ or not, for a small $\delta$ to be chosen later.

\textbf{Case 1.} $\chi_{n\overline{n}}<-\delta\chi_{1\overline{1}}$. In this case, it follows that
$\chi_{1\overline{1}}^{2}\leq \frac{1}{\delta^2}\chi_{n\overline{n}}^{2}$.
So, we only need to bound $\chi_{n\overline{n}}^{2}$. Clearly, we can obtain
if we throw some positive terms in \eqref{C2-4}
\begin{eqnarray}\label{C2-5}
0&\geq&-\frac{F^{i\overline{i}}|\chi_{1\overline{1};i}|^2}{\chi_{1\overline{1}}^2}
+\varphi^{\prime\prime}F^{i\overline{i}}|\nabla_i(|\nabla u|^2)|^2
+\frac{\varphi^{\prime}}{2}F^{i\overline{i}}\chi_{i\overline{i}}^2
+\psi^{\prime}F^{i\overline{i}}u_{i\overline{i}}
\nonumber \\&&-C\sum_{i}F^{i\overline{i}}-C.
\end{eqnarray}
Note that we get from \eqref{uii}
\begin{eqnarray}\label{C2-6}
\psi^{\prime}\sum_{i}F^{i\overline{i}}u_{i\overline{i}}&=&\psi^{\prime}\sum_{i}F^{i\overline{i}}
(\chi_{i\overline{i}}-(\chi_{0i\overline{i}}-\tau)-\tau)\nonumber
\\&=&\psi^{\prime}\Big(F_k+\sum_{l=0}^{k-1}(l-k+1)\beta_lF_l-\sum_{i}F^{i\overline{i}}
(\chi_{0i\overline{i}}-\tau)-\tau\sum_{i}F^{i\overline{i}}\Big)\nonumber\\&\geq&-C\Lambda+\tau\Lambda\sum_{i}F^{i\overline{i}}
\end{eqnarray}
Plugging \eqref{C2-6} into \eqref{C2-5}, we get by choosing $\Lambda$ large enough
\begin{eqnarray}\label{C2-7}
\varphi^{\prime\prime}F^{i\overline{i}}|\nabla_i(|\nabla u|^2)|^2
+\frac{\varphi^{\prime}}{2}F^{i\overline{i}}\chi_{i\overline{i}}^2
&\leq&\frac{F^{i\overline{i}}|\chi_{1\overline{1};i}|^2}{\chi_{1\overline{1}}^2}
+C(1+\Lambda).
\end{eqnarray}
In addition, we have from \eqref{Diff-1}
\begin{eqnarray}\label{C2-8}
\sum_{i}\frac{F^{i\overline{i}}|\chi_{1\overline{1};i}|^2}{\chi_{1\overline{1}}^2}
&=&\sum_{i}F^{i\overline{i}}|\varphi^{\prime}\nabla_i(|\nabla u|^2)+\psi^{\prime}u_i|^2
\nonumber\\&\leq&2(\varphi^{\prime})^2\sum_{i}F^{i\overline{i}}|\nabla_i(|\nabla u|^2)|^2
+8\Lambda^2K\sum_{i}F^{i\overline{i}}.
\end{eqnarray}
Moreover, we obtain in view of \eqref{chi} and \eqref{F^i}
\begin{eqnarray}\label{C2-91}
\sum_{i}F^{i\overline{i}}\chi_{i\overline{i}}^2\geq F^{n\overline{n}}\chi_{n\overline{n}}^2
\geq\frac{1}{n}\chi_{n\overline{n}}^2\sum_{i} F^{i\overline{i}}.
\end{eqnarray}
Substituting \eqref{C2-8} and \eqref{C2-91} into \eqref{C2-7}, we get
\begin{eqnarray*}
\frac{1}{K}\chi_{n\overline{n}}^2\sum_{i} F^{i\overline{i}}\leq 8\Lambda^2K\sum_{i}F^{i\overline{i}}+C(1+\Lambda).
\end{eqnarray*}
Then, it follows that by \eqref{2.11}
\begin{eqnarray*}
\chi_{1\overline{1}}\leq C K.
\end{eqnarray*}

\textbf{Case 2. }$\chi_{n\overline{n}}\geq-\delta \chi_{1\overline{1}}$.
Define the set
\begin{eqnarray*}
I=\Big\{i \in \{1, 2, ..., n\}: F^{i\overline{i}}>\delta^{-1}F^{1\overline{1}}\Big\}.
\end{eqnarray*}
It follows from \eqref{f-2}
\begin{eqnarray}\label{C2-9}
-\frac{1}{\chi_{1\overline{1}}}F^{i\overline{j},r\overline{s}}\chi_{i\overline{j};
1}\chi_{r\overline{s}; \overline{1}}&\geq&
\frac{1-\delta}{1+\delta}\frac{1}{\chi_{1\overline{1}}^2}\sum_{i \in I}F^{i\overline{i}}|\chi_{i\overline{1};
1}|^2\nonumber\\&\geq&\frac{1-\delta}{1+\delta}\frac{1}{\chi_{1\overline{1}}^2}\sum_{i \in I}F^{i\overline{i}}\Big(|\chi_{1\overline{1};
i}|^2+2\mathrm{Re}\{\chi_{1\overline{1};
i}\overline{b_i}\}\Big),
\end{eqnarray}
where $b_i=\chi_{0i \overline{1}; 1}-\chi_{0\overline{1}1; i}$.

Using \eqref{Diff-1}, we have in view of the fact $\varphi^{\prime \prime}=2(\varphi^{\prime})^2$
\begin{eqnarray}\label{C2-10}
&&\varphi^{\prime \prime}\sum_{i \in I}F^{i\overline{i}}|\nabla_i(|\nabla u|^2)|^2
\nonumber \\ &\geq& 2\sum_{i \in I}F^{i\overline{i}}\bigg(\delta \Big|\frac{\chi_{1\overline{1};
i}}{\chi_{1\overline{1}}}\Big|^2-\frac{\delta}{1-\delta}|\psi^{\prime}u_i|^2\bigg).
\end{eqnarray}
Choosing $\delta\leq\frac{1}{2A+1}$, we have $\psi^{\prime \prime}
\geq\frac{2\delta}{1-\delta}(\psi^{\prime})^2$. Then, we get by combining \eqref{C2-9} and \eqref{C2-10}
\begin{eqnarray}\label{C2-11}
&&-\sum_{i \in I}\frac{F^{i\overline{i}}|\chi_{1\overline{1};i}|^2}{\chi_{1\overline{1}}^2}
-(1-\delta^2)\frac{1}{\chi_{1\overline{1}}}F^{i\overline{j},r\overline{s}}\chi_{i\overline{j};
1}\chi_{r\overline{s}; \overline{1}}\nonumber\\&& \quad
+\varphi^{\prime\prime}\sum_{i \in I}F^{i\overline{i}}|\nabla_i(|\nabla u|^2)|^2
+\psi^{\prime\prime}\sum_{i \in I}F^{i\overline{i}}|u_i|^2\nonumber\\&\geq&\delta^2\sum_{i \in I}\frac{F^{i\overline{i}}|\chi_{1\overline{1};i}|^2}{\chi_{1\overline{1}}^2}
+\frac{2(1-\delta^2)(1-\delta)}{1+\delta}\frac{1}{\chi_{1\overline{1}}^2}\sum_{i \in I}F^{i\overline{i}}\mathrm{Re}\{\chi_{1\overline{1};
i}\overline{b_i}\}\nonumber\\&\geq&\delta^2\sum_{i \in I}\frac{F^{i\overline{i}}|\chi_{1\overline{1};i}|^2}{\chi_{1\overline{1}}^2}
-2(1-\delta)^2\frac{1}{\chi_{1\overline{1}}^2}\sum_{i \in I}F^{i\overline{i}}\mathrm{Re}\{\chi_{1\overline{1};
i}\overline{b_i}\}\nonumber\\&\geq&\frac{\delta^2}{2}\sum_{i \in I}\frac{F^{i\overline{i}}|\chi_{1\overline{1};i}|^2}{\chi_{1\overline{1}}^2}
-C\frac{4(1-\delta)^4}{\delta^2}\frac{1}{\chi_{1\overline{1}}^2}\sum_{i \in I}F^{i\overline{i}}
\nonumber\\ &\geq&\frac{\delta^2}{2}\sum_{i \in I}\frac{F^{i\overline{i}}|\chi_{1\overline{1};i}|^2}{\chi_{1\overline{1}}^2}
-C\sum_{i \in I}F^{i\overline{i}},
\end{eqnarray}
where we choose $\chi_{1\overline{1}}$ large enough to get the last inequality.

For the terms without an index in $I$, we have by \eqref{C2-8}
\begin{eqnarray}\label{C2-12}
&&-\sum_{i \notin I}\frac{F^{i\overline{i}}|\chi_{1\overline{1};i}|^2}{\chi_{1\overline{1}}^2}
+\varphi^{\prime\prime}\sum_{i \notin I}F^{i\overline{i}}|\nabla_i(|\nabla u|^2)|^2\nonumber\\&\geq&-8\Lambda^2K\sum_{i \notin I}F^{i\overline{i}}\nonumber\\&\geq&-\frac{8n\Lambda^2K}{\delta}F^{1\overline{1}},
\end{eqnarray}
where we used the fact $1 \notin I$ to get the last inequality.

Substituting \eqref{C2-11} and \eqref{C2-12} into \eqref{C2-4}
\begin{eqnarray}\label{C2-13}
&&C+C\sum_{i}F^{i\overline{i}}+\frac{8n\Lambda^2K}{\delta}F^{1\overline{1}}
\nonumber\\&\geq&\frac{1}{8K}\sum_{i}F^{i\overline{i}}\chi_{i\overline{i}}^2
+\psi^{\prime}\sum_{i}F^{i\overline{i}}u_{i\overline{i}}
.
\end{eqnarray}
Suppose that $\chi_{1\overline{1}}\geq N$. Since the cone condition \eqref{cone}
is equivalent to the condition \eqref{cone-1} for $\mu$ being the eigenvalues of $\chi_0$,
Lemma \ref{C2-212} works. Thus we will divide the following argument into
two cases:

\textbf{Case A: }If \eqref{C2-2-1} hold, we have
\begin{eqnarray*}
\sum_{i}F^{i\overline{i}} u_{i\overline{i}}
\leq-\theta-\theta \sum_{i}F^{i\overline{i}},
\end{eqnarray*}
which implies
\begin{eqnarray}\label{C2-14}
\psi^{\prime}\sum_{i}F^{i\overline{i}} u_{i\overline{i}}
\geq \Lambda\theta (1+\sum_{i}F^{i\overline{i}}).
\end{eqnarray}
Substituting \eqref{C2-14} into \eqref{C2-13}
\begin{eqnarray*}
&&C+C\sum_{i}F^{i\overline{i}}+\frac{8n\Lambda^2K}{\delta}F^{1\overline{1}}
\\&\geq&\frac{1}{8K}\sum_{i}F^{i\overline{i}}\chi_{i\overline{i}}^2
+\Lambda \theta (1+\sum_{i}F^{i\overline{i}}),
\end{eqnarray*}
which implies if we choose $\Lambda$ large enough
\begin{eqnarray*}
\frac{8n\Lambda^2K}{\delta}F^{1\overline{1}}
\geq\frac{1}{8K}\sum_{i}F^{i\overline{i}}\chi_{i\overline{i}}^2
\geq\frac{1}{8K}F^{1\overline{1}}\chi_{1\overline{1}}^2.
\end{eqnarray*}
Thus,
\begin{eqnarray*}
\chi_{1\overline{1}}^2\leq \frac{64n\Lambda^2K^2}{\delta}.
\end{eqnarray*}

\textbf{Case B:} Otherwise, we have by \eqref{C2-2-2}
\begin{eqnarray}\label{C200}
F^{1\overline{1}}\chi_{1\overline{1}}\geq \theta.
\end{eqnarray}
Thus, we get from \eqref{C2-6}
\begin{eqnarray}\label{C2-151}
&&C(1+\Lambda)+C\sum_{i}F^{i\overline{i}}+\frac{8n\Lambda^2K}{\delta}F^{1\overline{1}}
\nonumber\\&\geq&\frac{1}{8K}F^{1\overline{1}}\chi_{1\overline{1}}^2+\tau\Lambda\sum_{i}F^{i\overline{i}}
.
\end{eqnarray}
Choosing $\Lambda$ large enough, it follows from \eqref{C2-151}
\begin{eqnarray}\label{C2-16}
C(1+\Lambda)+\frac{8n\Lambda^2K}{\delta}F^{1\overline{1}}
\geq\frac{1}{8K}F^{1\overline{1}}\chi_{1\overline{1}}^2
.
\end{eqnarray}
Then, using \eqref{C200}, we choose $\chi_{1\overline{1}}$ large enough such that
\begin{eqnarray*}
\frac{1}{16K}F^{1\overline{1}}\chi_{1\overline{1}}^2\geq \frac{1}{16K}\theta \chi_{1\overline{1}}
\geq C(1+\Lambda).
\end{eqnarray*}
Taking the above inequality into \eqref{C2-16}, we arrive
\begin{eqnarray*}\label{C2-15}
\chi_{1\overline{1}}\leq \sqrt{\frac{128n}{\theta}}\Lambda K.
\end{eqnarray*}
So, we complete the proof.
\end{proof}

\section{The gradient estimate}

We will derive the gradient estimate using a blow-up argument
and Liouville-type theorem due to Dinew-Kolodziej \cite{Din12}. Suppose the gradient estimate fails. Then, there
exists a sequence solutions $u_m \in C^4(M)$ to the equation \eqref{K-eq2} such that
\begin{eqnarray*}
N_m=\sup_{M}|\nabla u_m|\rightarrow+\infty, \quad \mbox{as} \quad m\rightarrow+\infty.
\end{eqnarray*}
Clearly,
\begin{eqnarray}\label{m-1}
\chi^{k}_{u_m}\wedge \omega^{n-k}=\sum_{l=0}^{k-1}\alpha_l(z)\chi^{l}_{u_m}\wedge \omega^{n-l}, \quad \sup_{M}u_m=0.
\end{eqnarray}

For each $m$, we assume that $|\nabla u_m|$ attains its maximum value at $z_m \in M$. Then, after passing a subsequence,
$z_m$ converges some point $z \in M$. Choosing a normal coordinate chart around $z$, which we identify
with an open set in $\mathbb{C}^n$ with coordinates
$(z^1, ..., z^n)$, and such that $\omega(0)=\omega_0:=\frac{\sqrt{-1}}{2}\delta_{ij}dz^i\wedge d\overline{z}^j$.
Without loss of generality, we may assume that the open set contains $B_1(0)$. Set
\begin{eqnarray*}
v_m(z)=u\Big(\frac{z}{N_m}\Big), \quad z \in B_{N_m}(0).
\end{eqnarray*}
Then, we know from $C^2$ estimate
\begin{eqnarray*}
|v_m|_{C^2(B_{N_m}(0))}\leq C.
\end{eqnarray*}
By passing to a subsequence again, we can assume $v_m$ is $C^{1, \alpha}$ convergent
to a limit function  $v \in C^{1, \alpha}$ with $\nabla v(0)=1$. Then, we have from \eqref{m-1}
\begin{eqnarray*}\label{m-2}
\bigg[\chi\Big(\frac{z}{N_m}\Big)\bigg]^{k}\wedge \bigg[\omega\Big(\frac{z}{N_m}\Big)\bigg]^{n-k}
=\sum_{l=0}^{k-1}\alpha_l\Big(\frac{z}{N_m}\Big)
\bigg[\chi\Big(\frac{z}{N_m}\Big)\bigg]^{l}\wedge \bigg[\omega\Big(\frac{z}{N_m}\Big)\bigg]^{n-l},
\end{eqnarray*}
which results in
\begin{eqnarray*}\label{m-3}
&&\bigg[\chi_0\Big(\frac{z}{N_m}\Big)+N_{m}^{2}\frac{\sqrt{-1}}{2}\partial\overline{\partial} v_m\bigg]^{k}\wedge \bigg[\omega\Big(\frac{z}{N_m}\Big)\bigg]^{n-k}
\\&=&\sum_{l=0}^{k-1}\alpha_l\Big(\frac{z}{N_m}\Big)
\bigg[\chi_0\Big(\frac{z}{N_m}\Big)+N_{m}^{2}\frac{\sqrt{-1}}{2}\partial\overline{\partial} v_m\bigg]^{l}\wedge \bigg[\omega\Big(\frac{z}{N_m}\Big)\bigg]^{n-l}.
\end{eqnarray*}
Thus,
\begin{eqnarray*}\label{m-3}
&&N_{m}^{2k}\bigg[O\Big(\frac{1}{N_{m}^{2}}\Big)\omega_0+\frac{\sqrt{-1}}{2}\partial\overline{\partial}
v_m\bigg]^{k}\wedge \bigg[\omega_0+O\Big(\frac{|z|^2}{N^{2}_{m}}\Big)\omega_0\bigg]^{n-k}
\\&=&\sum_{l=0}^{k-1}\alpha_l\Big(\frac{z}{N_m}\Big)
N_{m}^{2l}\bigg[O\Big(\frac{1}{N_{m}^{2}}\Big)\omega_0+\frac{\sqrt{-1}}{2}\partial\overline{\partial}
v_m\bigg]^{l}\wedge \bigg[\omega_0+O\Big(\frac{|z|^2}{C^{2}_{m}}\Big)\omega_0\bigg]^{n-l}.
\end{eqnarray*}
Therefore, we have
\begin{eqnarray*}\label{m-3}
(\sqrt{-1}\partial\overline{\partial}
v)^{k}\wedge\omega_{0}^{n-k}=0,
\end{eqnarray*}
which is in the pluripotential sense.  Moreover, we have for any
$1\leq l\leq k$ by a similar reasoning
\begin{eqnarray*}
(\sqrt{-1}\partial\overline{\partial}
v)^{l}\wedge\omega_{0}^{n-l}\geq0.
\end{eqnarray*}
Thus, the limiting function $v$ is $k$-subharmonic.
A result of Blocki \cite{Blo05} tell us that $v$ is a maximal $k$-subharmonic function in $\mathbb{C}^n$.
Then the Liouville theorem in \cite{Din12} implies that $v$ is a constant, which contradicts
the fact $\nabla v(0)=1$.

\section{The proof of main theorem}

To get higher-order estimates, we need to show the equation \eqref{K-eq2} is uniformly elliptic.
First, we recall the following lemma.

\begin{lemma}
Under the assumption in Theorem \ref{Main}, we have
\begin{eqnarray}\label{k-1}
\sigma_{k-1}(\chi_{u})\geq C>0.
\end{eqnarray}
\end{lemma}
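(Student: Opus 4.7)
The plan is to combine the upper bound on $\sigma_l/\sigma_{k-1}$ from Lemma \ref{lem2.6} with the Newton--MacLaurin inequality \eqref{NM}. The key observation is that Assumption \ref{Ass}(i)--(ii) forces at least one coefficient among $\alpha_0,\ldots,\alpha_{k-2}$ to be strictly positive on all of $M$, which is enough to pin $\sigma_{k-1}(\chi_u)$ away from zero.

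First I would extract an index $l_0\in\{0,1,\ldots,k-2\}$ with $\alpha_{l_0}(z)>0$ for every $z\in M$. Indeed, if every $\alpha_l$ with $0\le l\le k-2$ were identically zero then $\sum_{l=0}^{k-2}\alpha_l$ would vanish, contradicting Assumption \ref{Ass}(ii); hence at least one is not identically zero, and by (i) it must be strictly positive everywhere. Compactness of $M$ then yields a uniform constant $c_0>0$ with $\beta_{l_0}(z)\ge c_0$ on $M$, where $\beta_{l_0}=\frac{C_n^k}{C_n^{l_0}}\alpha_{l_0}$.

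Second, since $u$ solves \eqref{K-eq2} with $\chi_u\in\Gamma_{k-1}(M)$, I would invoke the bound \eqref{2.9} from Lemma \ref{lem2.6} at the index $l=l_0$, which provides
\[
\frac{\sigma_{l_0}(\chi_u)}{\sigma_{k-1}(\chi_u)}\le C_1,
\]
with $C_1$ depending only on $n,k,c_0$ and $\sup_M|\beta_{k-1}|$. In the corner case $l_0=0$ this is already the conclusion, since $\sigma_0=1$ gives $\sigma_{k-1}(\chi_u)\ge 1/C_1$.

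Third, for $l_0\ge 1$, I would use Newton--MacLaurin (Proposition \ref{prop2.4}) in the cone $\Gamma_{k-1}$, which yields
\[
\Bigl(\tfrac{\sigma_{l_0}(\chi_u)}{C_n^{l_0}}\Bigr)^{1/l_0}\ \ge\ \Bigl(\tfrac{\sigma_{k-1}(\chi_u)}{C_n^{k-1}}\Bigr)^{1/(k-1)},
\]
so that $\sigma_{l_0}(\chi_u)\ge C_2\,\sigma_{k-1}(\chi_u)^{l_0/(k-1)}$. Dividing by $\sigma_{k-1}(\chi_u)$ and inserting the bound $\sigma_{l_0}/\sigma_{k-1}\le C_1$ gives
\[
C_2\,\sigma_{k-1}(\chi_u)^{-(k-1-l_0)/(k-1)}\ \le\ C_1,
\]
which rearranges to the desired uniform positive lower bound $\sigma_{k-1}(\chi_u)\ge (C_2/C_1)^{(k-1)/(k-1-l_0)}$.

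This is a purely algebraic consequence of already-proved estimates, so I do not expect a real obstacle; the only subtle point is the bookkeeping needed to extract the admissible index $l_0$ from Assumption \ref{Ass} and to cover the corner case $l_0=0$ separately, since Maclaurin cannot be applied with exponent $1/l_0$ when $l_0=0$.
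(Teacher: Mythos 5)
Your proposal is correct and follows essentially the same route as the paper: extract an index $l_0\le k-2$ with $\beta_{l_0}>0$ everywhere (forced by Assumption \ref{Ass}(i)--(ii)), apply the bound \eqref{2.9} of Lemma \ref{lem2.6} at $l=l_0$, and conclude via the generalized Newton--MacLaurin inequality \eqref{NM}. Your write-up merely fills in details the paper leaves implicit (the exact rearrangement and the trivial case $l_0=0$, where $\sigma_0=1$ makes \eqref{2.9} already the desired bound), so there is nothing substantive to add.
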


\begin{proof}
Without loss of generality, we assume there exists $0 \leq l_0 \leq k-2$ such that $\beta_{l_0}(z)>0$.
Then, we get from the inequality \eqref{2.9}
\begin{eqnarray*}
0<\frac{\sigma_{l_0}(\chi_u)}{\sigma_{k-1}(\chi_u)} \leq C.
\end{eqnarray*}
Thus, we complete the proof by using the generalized Newton-MacLaurin inequality \eqref{NM}
\end{proof}

Thus, using \eqref{k-1} and the uniform estimates up to second
order by Lemmas in the previous sections, we can deduce that
the equation \eqref{K-eq2} is uniformly elliptic.
Then, higher-order estimates follow from the Evans-Krylov theorem \cite{Eva82, Kry82}
and the Schauder estimate (see also \cite{Tos15}). Next, we shall
give a proof of Theorem \ref{Main} by the continuity method. The obstacle is that
we have to find a uniform cone condition for the solution flow
of the continuity method.

we define $\gamma(z)$  by
\begin{eqnarray*}
\chi_{0}^{k}\wedge \omega^{n-k}-
\sum_{l=1}^{k-2}\alpha_l\chi_{0}^{l}\wedge \omega^{n-l}=\gamma(z)\chi_{0}^{k-1}\wedge \omega^{n-k+1}
\end{eqnarray*}
It is easy to see from \eqref{initial} and \eqref{initial-1}
\begin{eqnarray}\label{cc1}
k\chi_{0}^{k-1}\wedge \omega^{n-k}>
\sum_{l=1}^{k-2}l\alpha_l\chi_{0}^{l-1}\wedge \omega^{n-l}+(k-1)\gamma\chi_{0}^{k-2}\wedge \omega^{n-k+1}.
\end{eqnarray}
Set
\begin{eqnarray*}
\widetilde{\alpha}_{k-1}(z)=\max\{\alpha_{k-1}(z), \gamma(z)\}.
\end{eqnarray*}
Then, $\chi_0$ satisfies the cone condition by combining \eqref{cc1} and \eqref{cone}
\begin{eqnarray}\label{cc2}
k\chi_{0}^{k-1}\wedge \omega^{n-k}&>&
\sum_{l=1}^{k-2}l\alpha_l\chi_{0}^{l-1}\wedge \omega^{n-l}\nonumber\\&&+(k-1)\widetilde{\alpha}_{k-1}\chi_{0}^{k-2}\wedge \omega^{n-k+1}.
\end{eqnarray}

We will apply the continuous method introduced in \cite{Tos19, Sun16, Sun17} to complete my proof.
First, we consider the family of the equations for $t \in [0, 1]$
\begin{eqnarray}\label{CM-1}
\chi_{u_t}^{k}\wedge \omega^{n-k}&=&\sum_{l=0}^{k-2}\alpha_l(z)\chi_{u_t}^{l}\wedge \omega^{n-l}
\nonumber\\&&+\Big[(1-t)\gamma(z)+t\widetilde{\alpha}_{k-1}(z)+a_t\Big]\chi_{u_t}^{k-1}\wedge \omega^{n-k+1},
\end{eqnarray}
where $\chi_{u_t} \in \Gamma_{k-1}(M)$ and $a_t$ is a constant for each $t$. We consider
\begin{eqnarray*}\label{CM-11}
\mathcal{T}_1=\{s \in [0, 1]: \exists \ u_t \in C^{2, \alpha}(M) \ \mbox{and} \ a_t \ \mbox{solving \eqref{CM-1} for} \ t \in [0, s] \}
\end{eqnarray*}
Clearly, $0 \in \mathcal{T}_1$ and $a_0=0$.

\begin{lemma}
$\mathcal{T}_1$ is open.
\end{lemma}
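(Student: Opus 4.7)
The plan is to apply the implicit function theorem at any fixed $t_0 \in \mathcal{T}_1$ to produce solutions of \eqref{CM-1} for $t$ in a neighborhood of $t_0$. Using the elliptic and concave reformulation \eqref{K-eq2}, I would define
\begin{equation*}
\Psi_t(u,a) = F(\chi_u, z) - \bigl[(1-t)\gamma(z) + t\widetilde{\alpha}_{k-1}(z) + a\bigr]
\end{equation*}
(with the conventional rescaling of constants so that the zero set of $\Psi_t$ coincides with the solutions of \eqref{CM-1}) as a map from an open subset of $(C^{2,\alpha}(M)/\mathbb{R}) \times \mathbb{R}$, namely $\{u : \chi_u \in \Gamma_{k-1}(M)\} \times \mathbb{R}$, into $C^{\alpha}(M)$. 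Since $\Gamma_{k-1}(M)$ is open in $C^{2,\alpha}$, so is the domain, and $u_{t_0}$ lies in its interior. The goal is to show $D\Psi_{t_0}$ at $(u_{t_0}, a_{t_0})$ is a Banach-space isomorphism; the IFT then delivers $C^{2,\alpha}$ solutions $(u_t,a_t)$ for all $t$ sufficiently close to $t_0$, which upgrade to $C^{\infty}$ by the Evans--Krylov and Schauder estimates, and the cone condition persists by openness of $\Gamma_{k-1}$.

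The linearization at $(u_{t_0},a_{t_0})$ is
\begin{equation*}
D\Psi_{t_0}(v,b) = \mathcal{L}v - b, \qquad \mathcal{L}v := F^{i\overline{j}}(\chi_{u_{t_0}})\, v_{i\overline{j}},
\end{equation*}
where by Lemma \ref{lem2.5} together with the lower bound \eqref{k-1} on $\sigma_{k-1}(\chi_{u_{t_0}})$, $\mathcal{L}$ is uniformly elliptic with smooth coefficients and no zero-order term. Injectivity on $(C^{2,\alpha}/\mathbb{R}) \times \mathbb{R}$ is immediate from the strong maximum principle: if $\mathcal{L}v = b$, then evaluating at a maximum and a minimum of $v$ forces $b=0$ and $v$ constant, i.e.\ $v \equiv 0$ in the quotient.

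Surjectivity is the delicate point, and I would follow the approach of Tosatti--Weinkove \cite{Tos10} highlighted in the introduction. The positive definite Hermitian matrix $(F^{i\overline{j}}(\chi_{u_{t_0}}))$ defines a new Hermitian metric $\tilde{\omega}$ on $M$ with respect to which $\mathcal{L}$ coincides, up to a positive factor, with the Chern Laplacian $\mathrm{tr}_{\tilde{\omega}}(\sqrt{-1}\partial\overline{\partial}\,\cdot)$. Ellipticity and compactness of $M$ make $\mathcal{L}$ a Fredholm operator of index zero with one-dimensional kernel (the constants), so its cokernel is also one-dimensional; choosing a Gauduchon-type volume form adapted to $\tilde{\omega}$, I would identify this cokernel with $\ker(\mathcal{L}^{\ast})$, spanned by some $h$. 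Because the corresponding adjoint $\mathcal{L}^{\ast}$ retains no zero-order term in this set-up, the strong maximum principle forces $h$ to have definite sign, and in particular $\int_M h\,\omega^n \neq 0$. Then for any $f \in C^{\alpha}(M)$, choosing
\begin{equation*}
b = \frac{\int_M f h\,\omega^n}{\int_M h\,\omega^n}
\end{equation*}
places $f+b$ in the image of $\mathcal{L}$ and produces $(v,b)$ with $\mathcal{L}v - b = f$; this yields surjectivity of $D\Psi_{t_0}$.

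The main obstacle is precisely the construction of the auxiliary Hermitian metric $\tilde{\omega}$ and the identification of $\ker(\mathcal{L}^{\ast})$ with functions of definite sign, for which the Kähler assumption on $\omega$ (removing torsion contributions that would otherwise spoil the clean relation between $\mathcal{L}$ and a Chern Laplacian) and the Tosatti--Weinkove technique are essential. Once this surjectivity is in hand, openness of $\mathcal{T}_1$ is a routine consequence of IFT.
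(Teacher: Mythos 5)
Your overall strategy (linearize at $t_0$, treat the constant as an extra unknown, and handle the cokernel via a Tosatti--Weinkove type construction) is the right family of ideas, but the surjectivity step as written has a genuine gap. The formal adjoint of $\mathcal{L}v=F^{i\overline{j}}(\chi_{u_{t_0}})v_{i\overline{j}}$ with respect to $\omega^n$ is the divergence-form operator $\mathcal{L}^{\ast}h=\partial_i\partial_{\overline{j}}\bigl(F^{i\overline{j}}h\bigr)$, which, once expanded, \emph{does} contain first- and zero-order terms in which derivatives of the coefficients $F^{i\overline{j}}$ hit $h$. So your assertion that ``$\mathcal{L}^{\ast}$ retains no zero-order term'' is false, and the strong maximum principle cannot be applied to $\mathcal{L}^{\ast}$ to conclude that the generator $h$ of the one-dimensional cokernel has a definite sign; note also that if $\mathcal{L}^{\ast}$ really had no zero-order term, its kernel would consist of the constants, which would force $\int_M \mathcal{L}v\,\omega^n=0$ for all $v$ --- false for a non-divergence-form operator. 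The positivity of $h$ (equivalently $\int_M h\,\omega^n\neq 0$, which is exactly what lets you absorb the constant $b$; incidentally your formula should read $b=-\int_M fh\,\omega^n/\int_M h\,\omega^n$) is a genuine theorem requiring a separate argument (Krein--Rutman type, or the argument carried out in \cite{Tos10}), and it is precisely the technical point your proof leaves unproved.

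The paper avoids this issue altogether: it defines the Hermitian metric $\widehat{\Omega}$ from the inverse of $H^{i\overline{j}}(u_{\hat{t}})$, uses Gauduchon's theorem \cite{Gau77} to find $\hat{f}$ with $\partial\overline{\partial}(e^{(n-1)\hat{f}}\widehat{\Omega}^{n-1})=0$, and then perturbs not the original equation \eqref{CM-1} but the normalized equation \eqref{H-eq}, which carries the factor $\int_M \frac{H(u_t)}{H(u_{\hat{t}})}e^{(n-1)\hat{f}}\widehat{\Omega}^{n}$ and is posed between the Banach manifolds $\mathcal{B}_1$ and $\mathcal{B}_2$ built from the weight $e^{(n-1)\hat{f}}\widehat{\Omega}^{n}$. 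With these choices the linearized operator at $\xi=0$ is computed to be exactly $\Delta_{\widehat{\Omega}}$ (the normalization integral term vanishes by the Gauduchon condition), and its invertibility between $T_0\mathcal{B}_1$ and $T_0\mathcal{B}_2$ reduces to the standard solvability of $\Delta_{\widehat{\Omega}_G}u=v$ when $\int_M v\,\widehat{\Omega}_G^n=0$ \cite{Buc99} --- no positivity statement about an adjoint kernel is needed. To repair your argument you must either prove (or properly cite) the sign of $h$, or switch to this Gauduchon normalization; also, the Kähler hypothesis on $\omega$ plays no role in this linear step, so it should not be invoked as the reason the construction works.
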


\begin{proof}
Rewriting the equations \eqref{CM-1} as the form
\begin{eqnarray*}\label{CM-1-open}
\log H(u_t, z, t):&=&\frac{\sigma_k(\chi_{u_t})}{\sigma_{k-1}(\chi_{u_t})}-\sum_{l=0}^{k-2}\beta_l
\frac{\sigma_l(\chi_{u_t})}{\sigma_{k-1}(\chi_{u_t})}\\&=&\frac{n-k+1}{k}\Big[(1-t)\gamma(z)+t\widetilde{\alpha}_{k-1}(z)+a_t\Big].
\end{eqnarray*}
In the following, we will write $H(u_t)=H(u_t, z, t)$ for short (suppressing the subscript $z, t$). Now we
follow the idea in \cite{Tos19, Sun16} to show $\mathcal{T}_1$ is open. Assume $\hat{t} \in \mathcal{T}_1$, we need to show that
there exists small $\varepsilon>0$ such that $t \in \mathcal{T}_1$ for any
$t \in [\hat{t}, \hat{t}+\varepsilon)$.

Define a new Hermitian metric corresponding to $u_t$
\begin{eqnarray*}
\Omega_t:=\frac{\sqrt{-1}}{2}H(u_t)H_{i\overline{j}}(u_t)dz^i\wedge dz^{\overline{j}}
\end{eqnarray*}
and
\begin{eqnarray*}
\widehat{\Omega}=\Omega_{\hat{t}}:=\frac{\sqrt{-1}}{2}H(u_{\hat{t}})H_{i\overline{j}}(u_{\hat{t}})dz^i\wedge dz^{\overline{j}},
\end{eqnarray*}
where $H^{i\overline{j}}=\frac{\partial H}{\partial u_{i\overline{j}}}$
and $\{H_{i\overline{j}}\}$ is the inverse of $H^{i\overline{j}}$.
Using Gauduchon's theorem \cite{Gau77} to $\widehat{\Omega}$, there exists a function $\hat{f}$ such that
$\widehat{\Omega}_{G}=e^{\hat{f}}\widehat{\Omega}$ is Gauduchon, i..e.,
$\partial\overline{\partial}(\widehat{\Omega}_{G}^{n-1})=0$. We may assume that by adding a constant to $\hat{f}$
\begin{eqnarray*}
\int_{M}e^{(n-1)\hat{f}}\widehat{\Omega}^{n}=1.
\end{eqnarray*}
We will show that we can find $u_t \in C^{2, \alpha}(M)$ for $t \in [\hat{t}, \hat{t}+\varepsilon)$ solving
\begin{eqnarray}\label{H-eq}
&&\frac{H(u_t)}{H(u_{\hat{t}})}\nonumber\\&=&\bigg(\int_{M}\frac{H(u_t)}{H(u_{\hat{t}})}e^{(n-1)\hat{f}}\widehat{\Omega}^{n}\bigg)
e^{\frac{n-k+1}{k}\Big[(\hat{t}-t)\gamma(z)+(t-\hat{t})\widetilde{\alpha}_{k-1}(z)+a_t-a_{\hat{t}}\Big]},
\end{eqnarray}
where $a_t$ is chosen such that
\begin{eqnarray*}
\int_{M}e^{\frac{n-k+1}{k}\Big[(\hat{t}-t)\gamma(z)+(t-\hat{t})
\widetilde{\alpha}_{k-1}(z)+a_t-a_{\hat{t}}\Big]}e^{(n-1)\hat{f}}\widehat{\Omega}^{n}=1.
\end{eqnarray*}
If $u_t$ is a solution to \eqref{H-eq}, $u_t+c$ is also for any $c\in \mathbb{R}$. Thus, we normalized $u_t$ such that
\begin{eqnarray*}
\int_{M}u_t e^{(n-1)\hat{f}}\widehat{\Omega}^{n}=0.
\end{eqnarray*}

Moreover, we define two Banach manifolds $\mathcal{B}_1$ and $\mathcal{B}_2$ by
\begin{eqnarray*}
\mathcal{B}_1:=\bigg\{\xi \in C^{2, \alpha}(M): \int_{M}\xi e^{(n-1)\hat{f}}\widehat{\Omega}^{n}=0\bigg\}
\end{eqnarray*}
and
\begin{eqnarray*}
\mathcal{B}_2:=\bigg\{\eta \in C^{\alpha}(M): \int_{M}e^{\eta} e^{(n-1)\hat{f}}\widehat{\Omega}^{n}=1\bigg\}.
\end{eqnarray*}
It is easy to see the tangent spaces of $\mathcal{B}_1$ and $\mathcal{B}_2$ at $0$:
\begin{eqnarray*}
T_0\mathcal{B}_1=\mathcal{B}_1 \quad
\mbox{and}
\quad T_0\mathcal{B}_2=\bigg\{\rho \in C^{\alpha}(M): \int_{M}\rho e^{(n-1)\hat{f}}\widehat{\Omega}^{n}=0\bigg\}.
\end{eqnarray*}
Define a linear operator $L: \mathcal{B}_1\rightarrow \mathcal{B}_2$ by
\begin{eqnarray*}
L(\xi):=\log H(u_{\hat{t}}+\xi)-\log H(u_{\hat{t}})-\log \bigg(\int_{M}\frac{H(u_{\hat{t}}+\xi)}{H(u_{\hat{t}})}e^{(n-1)\hat{f}}\widehat{\Omega}^{n}\bigg).
\end{eqnarray*}
Observe that $L(0)=0$.  By the inverse function theorem, to get openness of the
equation \eqref{H-eq} at $\xi=0$ we just need to show that
\begin{eqnarray*}
(DL)_0: T_0\mathcal{B}_1\rightarrow T_0\mathcal{B}_2
\end{eqnarray*}
is invertible. Compute
\begin{eqnarray*}
(DL)_0(\xi)&=&\frac{1}{H(u_{\hat{t}})}H^{i\overline{j}}(u_{\hat{t}})\xi_{ij}-
\int_{M}\frac{1}{H(u_{\hat{t}})}H^{i\overline{j}}(u_{\hat{t}})\xi_{ij} e^{(n-1)\hat{f}}\widehat{\Omega}^{n}
\\&=&\Delta_{\widehat{\Omega}}\xi-n\int_{M}e^{(n-1)\hat{f}}\widehat{\Omega}^{n-1}\wedge\partial\overline{\partial}\xi
\\&=&\Delta_{\widehat{\Omega}}\xi.
\end{eqnarray*}
where we are using the fact that $\partial\overline{\partial}(\widehat{\Omega}_{G}^{n-1})=0$.
Recall that we can solve the equation $\Delta_{\widehat{\Omega}_{G}}u=v$ as long as
$\int_{M}v\widehat{\Omega}_{G}^{n}=0$ (see e.g. \cite{Buc99}). Given $\rho \in T_0\mathcal{B}_2$ we see that
\begin{eqnarray*}
\int_{M}\rho e^{(n-1)\hat{f}}\widehat{\Omega}^{n}=\int_{M}\rho e^{-\hat{f}}\widehat{\Omega}_{G}^{n}=0.
\end{eqnarray*}
Then there exists $\eta$ solves
\begin{eqnarray*}
e^{-\hat{f}}\Delta_{\widehat{\Omega}}\eta=\Delta_{\widehat{\Omega}_{G}}\eta=\rho e^{-\hat{f}},
\end{eqnarray*}
which implies
\begin{eqnarray*}
\Delta_{\widehat{\Omega}}\eta=\rho,
\end{eqnarray*}
as required. This shows that we can find $u_t$ solving \eqref{H-eq} for
$t \in [\hat{t}, \hat{t}+\varepsilon)$.
Thus $\mathcal{T}_{1}$ is open.
\end{proof}

\begin{lemma}
$\mathcal{T}_1$ is closed.
\end{lemma}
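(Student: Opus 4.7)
The plan is to take a sequence $\{t_i\}\subset\mathcal{T}_1$ with $t_i\to t_\infty\in[0,1]$ and corresponding solutions $(u_{t_i},a_{t_i})$ of \eqref{CM-1}, normalized by $\sup_M u_{t_i}=0$. My goal is to establish uniform $C^{m,\alpha}$ bounds on $u_{t_i}$ and a uniform bound on $a_{t_i}$, then extract a smooth subsequential limit that solves \eqref{CM-1} at $t=t_\infty$.

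First I would bound $a_{t_i}$. Integrate \eqref{CM-1} over $M$: since $\chi_{u_{t_i}}-\chi_0=\frac{\sqrt{-1}}{2}\partial\overline{\partial}u_{t_i}$ is $\partial\overline{\partial}$-exact and $\omega$ is closed, Stokes' theorem gives $\int_M\chi_{u_{t_i}}^l\wedge\omega^{n-l}=\int_M\chi_0^l\wedge\omega^{n-l}$ for every $l$. The $l=k-1$ integral is strictly positive because $\chi_0\in\Gamma_{k-1}(M)$ forces $\sigma_{k-1}(\chi_0)>0$ pointwise. Solving the integrated equation for $a_{t_i}$ and using the uniform bounds on $\alpha_l,\gamma,\widetilde{\alpha}_{k-1}$ together with the pointwise nonnegativity $\chi_{u_{t_i}}^l\wedge\omega^{n-l}\geq0$ (from $\chi_{u_{t_i}}\in\Gamma_{k-1}(M)$) yields $|a_{t_i}|\leq C$ independent of $i$.

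Next I would verify that \eqref{CM-1} at $t_i$ satisfies the cone condition with a \emph{uniform} strict gap. The coefficient at position $k-1$ is $\beta_{k-1}^{(t_i)}(z)=(1-t_i)\gamma(z)+t_i\widetilde{\alpha}_{k-1}(z)+a_{t_i}$. Since $\widetilde{\alpha}_{k-1}\geq\gamma$ pointwise, we have $(1-t_i)\gamma+t_i\widetilde{\alpha}_{k-1}\leq\widetilde{\alpha}_{k-1}$, and \eqref{cc2} provides a strict cone gap at $\chi_0$ with coefficient $\widetilde{\alpha}_{k-1}$. The bounded shift by $a_{t_i}$ is absorbed by taking $\overline{\chi}=\chi_0+\epsilon\omega$ for a small uniform $\epsilon>0$, so the cone condition for \eqref{CM-1} at $t_i$ holds uniformly in $i$. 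With this in hand, Lemma \ref{C0-00} gives $|u_{t_i}|_{C^0(M)}\leq C$; the Hou--Ma--Wu type argument of Section 4 gives $\sup_M|\partial\overline{\partial}u_{t_i}|\leq C(\sup_M|\nabla u_{t_i}|^2+1)$; and the blow-up/Dinew--Kolodziej Liouville argument of Section 5 then controls $\sup_M|\nabla u_{t_i}|$, so $|u_{t_i}|_{C^2(M)}\leq C$.

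Finally, by \eqref{k-1} the operator $F$ in \eqref{K-eq2} is uniformly elliptic along the family, and by Lemma \ref{lem2.5} it is concave on $\Gamma_{k-1}$. Evans--Krylov then upgrades the $C^2$ bound to uniform $C^{2,\alpha}$ bounds, and Schauder bootstrapping yields uniform $C^{m,\alpha}$ bounds for every $m$. Arzela--Ascoli extracts a subsequence with $u_{t_i}\to u_{t_\infty}$ smoothly and $a_{t_i}\to a_{t_\infty}$; passing to the limit in \eqref{CM-1} exhibits $(u_{t_\infty},a_{t_\infty})$ as a smooth solution, and the uniform lower bound \eqref{k-1} persists in the limit, ensuring $\chi_{u_{t_\infty}}\in\Gamma_{k-1}(M)$. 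Hence $t_\infty\in\mathcal{T}_1$. The main obstacle is the uniformity of the cone condition: without the auxiliary path through $\gamma$ and $\widetilde{\alpha}_{k-1}=\max\{\alpha_{k-1},\gamma\}$, the naive linear deformation of $\alpha_{k-1}$ could degenerate the cone gap and the $C^0$/$C^2$ estimates with it. Inequality \eqref{cc2} is precisely the tool engineered to compensate for this, and the bounded shift by $a_{t_i}$ leaves its strict inequality intact.
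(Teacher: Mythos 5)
Your overall architecture (bound $a_t$, establish a uniform cone condition, then run the $C^0$--$C^2$--gradient--Evans--Krylov--Schauder chain and pass to the limit) matches the paper's plan, and your integration argument does give $|a_{t_i}|\le C$: since $a_{t_i}$ is a constant, its coefficient integrates to the fixed positive cohomological quantity $\int_M\chi_0^{k-1}\wedge\omega^{n-k+1}$, while the remaining terms are bounded using $\chi_{u_{t_i}}^l\wedge\omega^{n-l}\ge0$ and $\int_M\chi_{u_{t_i}}^l\wedge\omega^{n-l}=\int_M\chi_0^l\wedge\omega^{n-l}$. However, there is a genuine gap at the step you yourself single out as the main obstacle: boundedness of $a_{t_i}$ is \emph{not} enough for the uniform cone condition, and your proposed fix does not work. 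First, $\overline{\chi}=\chi_0+\epsilon\omega$ is not in the class $[\chi_0]$, so it is not an admissible representative in the definition of $\mathcal{C}_k(\omega)$, nor can it serve as the background form in the a priori estimates (Lemma \ref{C0-00} and the $C^2$ estimate use the strict inequality \eqref{cone-1} for the actual $\chi_0$ with the actual coefficients of \eqref{CM-1}, including $a_t$). Second, even setting the class issue aside, a small uniform $\epsilon$ can only absorb a \emph{small} positive shift of the coefficient $\beta_{k-1}$; the gap furnished by \eqref{cc1} and \eqref{cc2} is some fixed $\delta>0$ with no relation to the constant $C$ bounding $|a_{t_i}|$, so a positive $a_{t_i}$ of size comparable to $C$ could destroy the cone inequality and with it all the estimates.

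What is missing is the \emph{sign} $a_t\le 0$, and this is exactly what the paper proves and uses. At the maximum point of $u_t$ one has $\chi_{u_t}\le\chi_0$, and the concavity of $(\sigma_k/\sigma_l)^{1/(k-l)}$ (Proposition \ref{Con}) gives
\begin{equation*}
\frac{\chi_{0}^{k}\wedge \omega^{n-k}}{\chi_{0}^{l}\wedge \omega^{n-l}}
\geq\frac{\chi_{u_t}^{k}\wedge \omega^{n-k}}{\chi_{u_t}^{l}\wedge \omega^{n-l}} ,
\end{equation*}
so evaluating the equation \eqref{CM-1} there and using the defining identity for $\gamma$ together with $\widetilde{\alpha}_{k-1}\ge\gamma$ forces $a_t\le 0$; the analogous argument at the minimum point gives the lower bound on $a_t$ (so the integration argument you use is not even needed). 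Once $a_t\le 0$ is known, the coefficient $(1-t)\gamma+t\widetilde{\alpha}_{k-1}+a_t$ is pointwise no larger than the convex combination $(1-t)\gamma+t\widetilde{\alpha}_{k-1}$, and the cone condition for the flow follows directly from \eqref{cc1} and \eqref{cc2} with the original $\chi_0$, uniformly in $t$. You need to replace your absorption argument by this sign argument (or supply some other proof that $a_t\le 0$); the rest of your proof then goes through as in the paper.
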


\begin{proof}
It suffices to show (i) the uniform bound for $a_t$
and (ii) the uniform $C^{\infty}$ estimates for all $u_t$.
In fact, at the maximum point of $u_t$, using the concavity of
$(\frac{\sigma_k}{\sigma_l})^{\frac{1}{k-l}}$ (see Proposition \ref{Con}), we have
\begin{eqnarray*}
\frac{\chi_{0}^{k}\wedge \omega^{n-k}}{\chi_{0}^{l}\wedge \omega^{n-l}}
\geq\frac{\chi_{u_t}^{k}\wedge \omega^{n-k}}{\chi_{u_t}^{l}\wedge \omega^{n-l}}, \quad 0\leq l\leq k-1.
\end{eqnarray*}
Thus, we get from \eqref{cc1} and \eqref{cc2}
\begin{eqnarray*}
\chi_{0}^{k}\wedge \omega^{n-k}&\geq&\sum_{l=0}^{k-2}\alpha_l(z)\chi_{0}^{l}\wedge \omega^{n-l}
+\Big[(1-t)\gamma(z)+t\widetilde{\alpha}_{0}(z)+a_t\Big]\chi_{0}^{k-1}\wedge \omega^{n-k+1}
\\&\geq&\sum_{l=0}^{k-2}\alpha_l(z)\chi_{0}^{l}\wedge \omega^{n-l}
+\Big[\gamma(z)+a_t\Big]\chi_{0}^{k-1}\wedge \omega^{n-k+1}.
\end{eqnarray*}
Thus, $a_t\leq 0$. Similarly, at the minimum point of $u_t$, we have
\begin{eqnarray*}
\chi_{0}^{k}\wedge \omega^{n-k}&\leq&\sum_{l=0}^{k-2}\alpha_l(z)\chi_{0}^{l}\wedge \omega^{n-l}
+\Big[(1-t)\gamma(z)+t\widetilde{\alpha}_{0}(z)+a_t\Big]\chi_{0}^{k-1}\wedge \omega^{n-k+1}
\\&\leq&\sum_{l=1}^{k-1}\alpha_l(z)\chi_{0}^{l}\wedge \omega^{n-l}
+\Big[\widetilde{\alpha}_{0}(z)+a_t\Big]\chi_{0}^{k-1}\wedge \omega^{n-k+1}.
\end{eqnarray*}
Therefore, $a_t$ is uniformly bounded from below. Thus, the requirement (i) is met. For the requirement (ii), it is sufficient to show
the cone condition
\begin{eqnarray*}
k\chi_{0}^{k-1}\wedge \omega^{n-k}&>&\sum_{l=1}^{k-2}l\alpha_l\chi_{0}^{l-1}\wedge \omega^{n-l}
\\&&+(k-1)\Big[(1-t)\gamma(z)+t\widetilde{\alpha}_{0}(z)+a_t\Big]\chi_{0}^{k-2}\wedge \omega^{n-k+1}.
\end{eqnarray*}
is uniform for the equations \eqref{CM-1}.
This fact can be easily deduced in view of $a_t\leq 0$, \eqref{cc1} and \eqref{cc2}.
\end{proof}

Therefore, there exists a solution to the equation
\begin{eqnarray*}\label{CM-12}
\chi_{v}^{k}\wedge \omega^{n-k}&=&\sum_{l=0}^{k-2}\alpha_l(z)\chi_{v}^{l}\wedge \omega^{n-l}
+\Big[\widetilde{\alpha}_{k-1}(z)+a_1\Big]\chi_{0}^{k-1}\wedge \omega^{n-k+1}.
\end{eqnarray*}

Second, we consider the family of equations
\begin{eqnarray}\label{CM-2}
\chi_{u_t}^{k}\wedge \omega^{n-k}&=&\sum_{l=0}^{k-2}\alpha_l(z)\chi_{u_t}^{l}\wedge \omega^{n-l}
\\ \nonumber&&+\Big[(1-t)\widetilde{\alpha}_{k-1}(z)+t\alpha_{k-1}(z)+b_t\Big]\chi_{0}^{k-1}\wedge \omega^{n-k+1}.
\end{eqnarray}
where $\chi_{u_t} \in \Gamma_{k-1}(M)$ and $b_t$ is a constant for each $t$. We consider
\begin{eqnarray*}\label{CM-11}
\mathcal{T}_2=\{s \in [0, 1]: \exists \ u_t \in C^{2, \alpha}(M) \ \mbox{and} \ b_t \ \mbox{solving \eqref{CM-2} for} \ t \in [0, s] \}
\end{eqnarray*}
Clearly, $0 \in \mathcal{T}_2$ with $u_0=v$ and $b_0=a_1$. The openness of $\mathcal{T}_2$
and uniform bound for $b_t$ can be similarly deduced by the previous argument for \eqref{CM-1}.
Moreover, from the argument for $\mathcal{T}_1$, we know it suffices to show the cone condition holds for the equations \eqref{CM-2} in order to guarantee that the continuity method works.

Integrating \eqref{CM-2} on $M$, we get from the condition \eqref{Nece}
\begin{eqnarray*}
&&\int_{M}\chi_{0}^{k}\wedge \omega^{n-k}\\&=&\int_{M}\chi_{tu}^{k}\wedge \omega^{n-k}\\&=&\sum_{l=0}^{k-2} \int_{M}\alpha_l\chi_{tu}^{l}\wedge \omega^{n-l}+\int_{M}\Big[(1-t)\widetilde{\alpha}_{k-1}+t\alpha_{k-1}+b_t\Big]\chi_{0}^{k-1}\wedge \omega^{n-k+1}
\\&\geq&\sum_{l=0}^{k-2} c_{k,l}\int_{M}\chi_{0}^{l}\wedge \omega^{n-l}+(c_{k, k-1}+b_t)\int_{M}\chi_{0}^{k-1}\wedge \omega^{n-k+1}
\\&\geq&\int_{M}\chi_{0}^{k}\wedge \omega^{n-k}+b_t\int_{M}\chi_{0}^{k-1}\wedge \omega^{n-k+1}.
\end{eqnarray*}
Thus, $b_t\leq 0$. So the cone condition holds for the equations \eqref{CM-2}.
Therefore, we complete the proof of Theorem \ref{Main}.

 \bibliographystyle{siam}

\end{document}